\newcommand{\overbar}[1]{\mkern 1.5mu\overline{\mkern-1.5mu#1\mkern0mu}\mkern 1.5mu}
\newcommand{\Ad}{\mathrm{Ad}}
\newcommand{\ad}{\mathrm{ad}}
\newcommand{\Aut}{\mathrm{Aut}\,}
\newcommand{\codim}{\mathop{\mathrm{codim\,}}}
\newcommand{\core}{\mathop{\mathrm{Core}}\nolimits}
\newcommand{\dd}{\mathrm{d}}
\newcommand{\Diff}{\mathrm{Diff}}
\newcommand{\eqdef}{\colon\!\!\!=}
\newcommand{\F}{\mathcal{F}}
\newcommand{\g}{\mathfrak{g}}
\newcommand{\GA}{\mathrm{GA}}
\newcommand{\GL}{\mathrm{GL}}
\newcommand{\id}{\mathrm{id}}
\newcommand{\im}{\mathop{\mathrm{im}}}
\newcommand{\inv}{^{-1}}
\newcommand{\invar}{\mathrm{inv}}
\newcommand{\kk}{\mathfrak{k}}
\newcommand{\N}{\mathbb{N}}
\newcommand{\p}{\mathfrak{p}}
\newcommand{\pt}{\mathrm{pt}}
\newcommand{\R}{\mathbb{R}}
 \newcommand{\SL}{\mathrm{SL}}
\newcommand{\trace}{\mathop{\mathrm{trace}}}
\newcommand{\Z}{\mathbb{Z}}
\newtheorem{teo}{Theorem}[section]
\newtheorem{theorem}[teo]{Theorem}
\newtheorem{prop}[teo]{Proposition}
\newtheorem{proposition}[teo]{Proposition}
\newtheorem{lemma}[teo]{Lemma}
\newtheorem{lema}[teo]{Lemma}
\newtheorem{cor}[teo]{Corollary}
\newtheorem{definition}[teo]{Definition}
\newtheorem{defi}[teo]{Definition}
\newtheorem{ejem}[teo]{Example}
\newtheorem{example}[teo]{Example}
\newtheorem{remark}[teo]{Remark}
\begin{document}


\title[Non-unimodular foliations]{Non-unimodular transversely homogeneous foliations}

\author[E.~Mac\'ias]{{E.}~{Mac\'ias-Virg\'os}}
\author[P.L. Mart\'{\i}n]{{P.L.}~{Mart\'{\i}n-M\'endez}}

\address{Departamento de Matem\'aticas,\\ Universidade de San\-tia\-go de Compostela, Spain.}

\email{{\tt quique.macias@usc.es}}
\email{{\tt plmartin@edu.xunta.es}}

\thanks{The first author was partially supported by MINECO Spain Research Project MTM2016-78647-P and by Xunta de Galicia ED431C 2019/10 with FEDER funds}

\keywords{Transversely homogeneous foliation, Lie foliation, base-like cohomology, unimodular foliation}

\subjclass[2010]{Primary: 57R30
Secondary:  53C12}						
	
\date{}




\begin{abstract}
We give sufficient conditions for the tautness of a transversely  homogenous foliation defined on a compact manifold, by computing its base-like cohomology. As an application, we prove that if the foliation is non-unimodular then either the ambient manifold, the closure of the leaves or the total space of an associated principal bundle fiber over $S^1$.
\end{abstract}


\maketitle

\setcounter{tocdepth}{2}

\section{Introduction}
A foliation $\F$ on a manifold $M$ is {\em transversely homogeneous} if its transverse holonomy pseudogroup  is generated by the left action of a Lie group $G$ on a  homogeneous space $N=G/K$. Reference \cite{AlvarezNozawa2012} by \'Alvarez and Nozawa contais many examples of this type of foliations.

The fine structure of a transversely homogeneous foliation was established by R. Blumental in his Ph.D. thesis \cite{Blumenthal1978,Blumenthal1979}, and it is described in Theorem \ref{desth}. It can be summarized as follows: there is a {\em holonomy homomorphism} $h\colon \pi_1(M) \to G_\sharp$ (we denote by $G_\sharp$ the  quotient  of the Lie group $G$ by its ineffective subgroup). Let $\Gamma$ be the image of $h$ and let $p\colon \widetilde M \to M$  be the covering of $M$ with fundamental group $\ker h$. Then the induced foliation $p^*\F$  is given by an $h$-equivariant submersion $f\colon \widetilde M \to G/K$, called a {\em developing map} for  $\F$.  This structure theorem will be the main tool in this paper.

When the holonomy pseudogroup $\Gamma$  preserves an invariant metric, the foliation is  a {\em Riemannian foliation}. This condition is ensured, for instance, by asking the isotropy group $K_\sharp$ to be compact. 

In the first part of the paper we are interested in computing the so-called {\em basic} or {\em base-like cohomology} $H(M/\F)$ of the foliation.
Base-like cohomology of a foliation was first introduced by Reinhart  \cite{Reinhart1959} and has been intensively studied since then.  
 
The foliation is {\em unimodular} if  the top-dimensional basic
cohomology group,
$H^q(M(\F)$, $q=\codim \F$, is not null.
In his Ph.D. thesis \cite{Carriere1982} Carri\`ere conjectured that, for Riemannian foliations on
compact manifolds,  being unimodular  is equivalent to being   {\em taut}, the latter meaning that   there exists a Riemannian  metric on $M$
making all leaves minimal submanifolds. This strong result was finally proved by Masa \cite{Masa1992}. A historical account of these results and their importance can be found in \cite{Royo2009}. 

In \cite{Carriere1982},  Carri\`ere also  gave the first example of a Riemannian non-uni\-mo\-du\-lar foliation (see Example \ref{CARREXAMPLE}), which  is in fact a {\em Lie foliation}. Lie foliations are the simplest examples of transversely homogeneous foliations, where $K=\{e\}$ is the trivial subgroup; in other words,   they are transversely modeled on a Lie group with translations as transition maps. In particular, a Lie foliation is necessarily Riemannian. For these foliations it happens that $H(M/\F)$ equals $H_\Gamma(G)$, the cohomology of $\Gamma$-invariants forms on $G$.  
El Kacimi Alaoui and Nicolau   proved the following characterization of unimodular Lie foliations:
\begin{teo}\cite[Theorem 1.2.4]{ElKacimi1990}\label{gkunim}
Let $\overline\Gamma$ be the closure of $\Gamma$ in $G$. Assume that the homogeneous space $\overline \Gamma\backslash G$ is compact and that the groups $G$ and $\overline\Gamma$ are unimodular.  Then $H^n_\Gamma(G)\neq 0$,  where $n=\dim G$.
\end{teo}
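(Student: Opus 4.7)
The plan is to pass from $\Gamma$ to the closed Lie subgroup $\overline\Gamma$, exhibit a canonical $\overline\Gamma$-invariant volume form coming from unimodularity of $G$, and detect its nontriviality by an integration functional provided by unimodularity of $\overline\Gamma$ and compactness of $\overline\Gamma\backslash G$.

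First I would observe that the map $g \mapsto L_g^*$ is continuous on $\Omega^\bullet(G)$ in the $C^\infty$ topology, so being $\Gamma$-invariant is a closed condition in $\overline\Gamma$. Since $\Gamma$ is dense in $\overline\Gamma$, this yields
\[
\Omega^\bullet_\Gamma(G) = \Omega^\bullet_{\overline\Gamma}(G),
\]
and hence $H^n_\Gamma(G) = H^n(\Omega^\bullet_{\overline\Gamma}(G), d)$. I may therefore work with the closed Lie subgroup $\overline\Gamma \subset G$ acting on $G$ by left translations.

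Next, pick any non-zero right-invariant $n$-form $\omega$ on $G$. A direct computation at the identity gives $L_g^*\omega = \det(\Ad g)\,\omega$, so the unimodularity of $G$ forces $\omega$ to be bi-invariant. In particular $L_\gamma^*\omega = \omega$ for every $\gamma \in \overline\Gamma$, so $\omega \in \Omega^n_{\overline\Gamma}(G)$; being of top degree it is automatically closed, and everything reduces to showing $[\omega] \neq 0$.

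For this, consider the principal $\overline\Gamma$-bundle $\pi\colon G \to \overline\Gamma\backslash G$, whose base is compact by hypothesis. The simultaneous unimodularity of $G$ and $\overline\Gamma$ is exactly the classical compatibility condition ensuring that a nonzero $G$-invariant Radon measure $\mu$ exists on $\overline\Gamma\backslash G$. Under the identification $\Omega^n_{\overline\Gamma}(G) \cong C^\infty(\overline\Gamma\backslash G)\cdot\omega$ I would define
\[
I\colon \Omega^n_{\overline\Gamma}(G) \to \R, \qquad I(f\omega) = \int_{\overline\Gamma\backslash G} \bar f\,d\mu,
\]
and note $I(\omega) = \mu(\overline\Gamma\backslash G) > 0$. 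The essential point is then $I\circ d = 0$: this I would obtain by factoring $I$ through an integration-along-the-fibre map $\pi_*\colon \Omega^\bullet_{\overline\Gamma}(G) \to \Omega^{\bullet - \dim\overline\Gamma}(\overline\Gamma\backslash G)$, built from the bi-invariant volume on the fibres (which exists precisely because $\overline\Gamma$ is unimodular) and satisfying $\pi_* \circ d = d \circ \pi_*$; Stokes on the closed manifold $\overline\Gamma\backslash G$ then gives $I(d\beta) = \int d(\pi_*\beta) = 0$. This forces $[\omega] \neq 0$ in $H^n_\Gamma(G)$.

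The main obstacle is the construction and $d$-compatibility of $\pi_*$: it is exactly here that the two unimodularity hypotheses enter jointly, via the classical descent criterion for invariant measures on homogeneous spaces, while compactness of $\overline\Gamma\backslash G$ is used both to keep $I(\omega)$ positive and finite and to discard boundary contributions in Stokes' theorem.
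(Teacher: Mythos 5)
Your overall plan --- pass to $\overline\Gamma$, observe that a bi-invariant top form $\omega$ lies in $\Omega^n_{\overline\Gamma}(G)$, and detect $[\omega]\neq 0$ by an integration functional $I$ coming from the $G$-invariant measure on the compact $W=\overline\Gamma\backslash G$ --- is the same core mechanism the paper uses: your $I$ is, up to normalization, the averaging retraction $r\colon\Omega_{\overline\Gamma}(G)\to\Omega_G(G)$ of Theorem~\ref{morfismo} specialized to top degree and evaluated at the identity. But the justification of the crucial fact $I\circ d=0$ has a gap. You propose to factor $I$ through an integration-along-the-fibre operator $\pi_*\colon\Omega^\bullet_{\overline\Gamma}(G)\to\Omega^{\bullet-\dim\overline\Gamma}(W)$ satisfying $\pi_*\circ d=d\circ\pi_*$ and then apply Stokes on $W$. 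The problem is that the fibre $\overline\Gamma$ of $\pi\colon G\to W$ need not be compact under the stated hypotheses (take $G=\R^2$ and $\Gamma$ a dense subgroup of $\Z\times\R$, so $\overline\Gamma=\Z\times\R$ and $W=S^1$). For noncompact fibres the naive fibre integral of a $\overline\Gamma$-invariant form diverges, and the alternative reading --- contract the vertical component of $\alpha$ against the invariant fibre volume and descend --- depends on a choice of $\overline\Gamma$-equivariant horizontal complement and is not a cochain map in general; establishing $\pi_*\circ d=d\circ\pi_*$ in this setting needs further structure (reductivity or flatness of the induced connection) that is not available from unimodularity alone.

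The paper's route closes this step differently, and you should adopt it. The averaging $r(\alpha)=\int_W L_g^*\alpha\,\nu([g])$ is a cochain map by differentiation under the integral sign (Theorem~\ref{dbsiv}), with $r\circ i=\mathrm{id}$; then $r(d\beta)=d(r\beta)$ lands in $\Omega^n_G(G)\cong\Lambda^n\g^*$, and the Chevalley--Eilenberg differential $d\colon\Lambda^{n-1}\g^*\to\Lambda^n\g^*$ is the zero map precisely because $\g$ is unimodular (on this component, after the usual identifications, $d$ is $X\mapsto\pm\trace\ad_X$). Hence $r(d\beta)=0$, and evaluating at the identity against $\omega$ gives $I\circ d=0$ with no fibre integration at all. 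The rest of your argument --- $I(\omega)>0$ by compactness of $W$, hence $[\omega]\neq 0$ in $H^n_\Gamma(G)=H^n_{\overline\Gamma}(G)$ --- then goes through.
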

The proof is based on the injectivity   of the morphism $i^*\colon H(\g) \to H_\Gamma(G)$ induced by the inclusion $\Omega_G(G) \subset \Omega_\Gamma(G)$. In general, $i^*$ is not injective, as proved by the same authors
\cite[Example 3.2]{ElKacimi1990}.

For general transversely homogeneous foliations, Blumenthal \cite{Blumenthal1980} proved (under some hypothesis) that $H(M/\F)$ equals $H_\Gamma(G/K)$ (see Theorem \ref{cohth}). 

Recall that a Lie group $G$ is {\em unimodular} if its modular function satisfies $\vert m_G\vert=1$ .We shall introduce a related definition (see subsection \ref{UNIMGRUPO}): the Lie group
$G$ is {\em strongly unimodular} if $m_G=1$.
We generalize El Kacimi-Nicolau's result above, by proving:
\begin{teo} \label{MAININTRO}
Assume that $W=\overline\Gamma\backslash G_\sharp$ is compact, the Lie group $G_\sharp$ is strongly unimodular, the subgroup $\overline\Gamma$ is unimodular and $H^q_{G_\sharp}(N)\neq 0$, for  $q=\dim N$. Then $H^q_\Gamma(N)\neq 0$.
\end{teo}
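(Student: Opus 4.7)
The plan is to extend El Kacimi Alaoui and Nicolau's proof of Theorem \ref{gkunim} to the present setting. What they exploit is the injectivity of the natural map from $G$-invariant cohomology to $\Gamma$-invariant cohomology. Here the analogous statement to prove is that the inclusion $i\colon \Omega_{G_\sharp}^*(N) \hookrightarrow \Omega_\Gamma^*(N)$ induces an injection $i^*\colon H^q_{G_\sharp}(N) \to H^q_\Gamma(N)$ in top degree, which combined with the nonvanishing hypothesis on $H^q_{G_\sharp}(N)$ yields the conclusion.

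I would establish this injectivity by constructing a chain-map retraction $A$ of $i$ through averaging over $W = \overline\Gamma\backslash G_\sharp$. First, continuity of the action of $G_\sharp$ on forms together with the density of $\Gamma$ in $\overline\Gamma$ shows that any smooth $\Gamma$-invariant form on $N$ is already $\overline\Gamma$-invariant, so $\Omega_\Gamma^*(N) = \Omega_{\overline\Gamma}^*(N)$. The strong unimodularity of $G_\sharp$, the unimodularity of $\overline\Gamma$, and the compactness of $W$ then provide a right-$G_\sharp$-invariant volume form of finite total mass on $W$, enabling genuine integration of differential forms (not merely densities) along $\overline\Gamma$-cosets. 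Using this I set
\[
A(\omega) \;=\; \frac{1}{\mathrm{vol}(W)} \int_W L_g^*\omega \, d[g], \qquad \omega \in \Omega_{\overline\Gamma}^*(N),
\]
where $L_g\colon N \to N$ is left translation by $g$. The $\overline\Gamma$-invariance of $\omega$ gives $L_{\gamma g}^*\omega = L_g^*L_\gamma^*\omega = L_g^*\omega$, so the integrand depends only on the class of $g$ in $W$ and $A(\omega)$ is well-defined.

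The three required properties of $A$ are then short verifications. The identity $L_g \circ L_{g_0} = L_{gg_0}$ yields $L_{g_0}^*L_g^*\omega = L_{gg_0}^*\omega$; substituting into the integral and using right-invariance of the measure under the change of variable $g \mapsto gg_0^{-1}$ gives $L_{g_0}^*A(\omega) = A(\omega)$ for every $g_0 \in G_\sharp$, so $A(\omega) \in \Omega_{G_\sharp}^*(N)$. Since each $L_g^*$ commutes with $d$ and $d$ passes under the integral, $A$ is a chain map. Finally, if $\omega$ is already $G_\sharp$-invariant then $L_g^*\omega = \omega$ for every $g$, whence $A(i(\omega)) = \omega$. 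Therefore $A\circ i = \id$ on $\Omega_{G_\sharp}^*(N)$, forcing $i^*$ to be injective in every degree, and in particular $H^q_{G_\sharp}(N) \hookrightarrow H^q_\Gamma(N)$.

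The main obstacle, and the only place where the precise unimodularity hypotheses enter in an essential way, is the first step of constructing an appropriate right-$G_\sharp$-invariant top form on $W$: ordinary unimodularity of both groups only produces an invariant density, and the asymmetric hypothesis --- strong unimodularity of $G_\sharp$ combined with unimodularity of $\overline\Gamma$ --- is what upgrades it to an oriented volume form against which forms can be integrated. Once that is in place the remainder is a purely formal chain-level computation.
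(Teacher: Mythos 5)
Your overall strategy—averaging over $W$ to build a chain-level retraction $A$ of $i$, and thereby getting injectivity of $i^*$—is exactly what the paper does (its Theorem \ref{morfismo}). But there is a genuine gap in the one step you yourself flag as "the only place where the precise unimodularity hypotheses enter in an essential way." You assert that strong unimodularity of $G_\sharp$ together with unimodularity of $\overline\Gamma$ and compactness of $W$ produce a right-$G_\sharp$-invariant \emph{volume form} on $W$. That is not justified and can fail. By Helgason's criterion, $W = \overline\Gamma\backslash G_\sharp$ carries a $G_\sharp$-invariant volume form if and only if $\det\Ad_{\overline\Gamma}(\gamma) = \det\Ad_{G_\sharp}(\gamma)$ for all $\gamma\in\overline\Gamma$. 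Strong unimodularity of $G_\sharp$ gives $\det\Ad_{G_\sharp}(\gamma)=1$, but mere unimodularity of $\overline\Gamma$ gives only $\lvert\det\Ad_{\overline\Gamma}(\gamma)\rvert=1$; since $\overline\Gamma$ is typically not connected, it can happen that $\det\Ad_{\overline\Gamma}(\gamma)=-1$ for some $\gamma$, and then no invariant volume form exists—only an invariant density, against which you cannot integrate differential forms without an orientation. Your sentence claiming the hypotheses "upgrade" the density to an oriented volume form is precisely the assertion that needs an argument, and as stated it is false.

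The paper repairs this by passing to the finite-index subgroup $H_2=\{\gamma\in\overline\Gamma : \det\Ad_{\overline\Gamma}(\gamma)>0\}$. For $H_2$ one does have $\det\Ad_{H_2}(h)=1=\det\Ad_{G_\sharp}(h)$, the quotient $W_2=H_2\backslash G_\sharp$ is still compact, and the averaging retraction applies to give injectivity of $H_{G_\sharp}(N)\to H_{H_2}(N)$; factoring through $\Omega_{\overline\Gamma}(N)\subset\Omega_{H_2}(N)$ then forces injectivity of $H_{G_\sharp}(N)\to H_{\overline\Gamma}(N)$ as well. The paper also treats the degenerate case $\dim\overline\Gamma=0$ separately (there $\overline\Gamma$ is discrete, the modular-function argument via $\det\Ad$ degenerates, and instead one pushes a bi-invariant volume form on $G_\sharp$ down to $W$ and invokes Stokes to confirm it is a volume form). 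To complete your proof you would need to add both the $H_2$ workaround and the discrete case; as written, the proposal silently assumes what must be proved at the only non-formal step.
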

This time,  the proof will rely on the injectivity of the morphism 
$i^*$ induced in cohomology by the inclusion $\Omega_G(G/K)\subset \Omega_\Gamma(G/K)$ (see Theorem \ref{morfismo}). 
 
The second part of the paper exploits those cohomological results. Carri\`ere's example
cited above is defined on a $3$-dimensional manifold $T_A^3$ which is a torus bundle over $S^1$ and the closures of the leaves are tori. We shall prove that this is a general situation in the following Theorem:

 \begin{teo} \label{generalizaciondelrendus}
Let $N=G_0/K_0$ be a homogeneous space, with $G_0$ connected and $(K_0)_\sharp$ compact and strongly unimodular. Let $\F$ be an $N$-transversely homogeneous foliation on the compact manifold  $M$, defined by a developing map whose fibers have a finite number of connected components.   If the foliation $\F$ is not unimodular,  then either $M$, or the closures of the leaves, or the total space of the Blumenthal bundle, fiber over $S^1$.
\end{teo}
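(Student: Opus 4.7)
The plan is to argue contrapositively via Theorem \ref{MAININTRO}: assuming $\F$ is not unimodular, I will produce a fibration over $S^1$ of one of the three spaces in the conclusion. Two preliminary steps come first. For the \emph{cohomological input}, I claim that under the standing hypotheses $H^q_{G_\sharp}(N)\neq 0$, where $q=\dim N$. Compactness of $(K_0)_\sharp$ allows averaging, so the existence of a $G_\sharp$-invariant top form on $N$ is equivalent to the triviality of the $(K_0)_\sharp$-action on the line $\Lambda^q(\g_0/\kk_0)$; strong unimodularity of $(K_0)_\sharp$ encodes precisely this triviality, and connectedness of $G_0$ globalises the resulting invariant form. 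For the \emph{geometric model}, Theorem \ref{desth} provides the Blumenthal bundle $\pi\colon\widehat M\to M$, a principal $(K_0)_\sharp$-bundle carrying a Lie $G_\sharp$-foliation $\widehat\F$ of holonomy $\Gamma$; compactness of $(K_0)_\sharp$ and of $M$ makes $\widehat M$ compact, while the finite-fibers assumption on the developing map places $\widehat\F$ in Fedida's setting, so $W=\overline\Gamma\backslash G_\sharp$ is the compact base of the leaf-closure fibration $\widehat M\to W$.

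With these in hand, Theorem \ref{MAININTRO} reduces non-unimodularity of $\F$ to the failure of either strong unimodularity of $G_\sharp$ or unimodularity of $\overline\Gamma$. If $G_\sharp$ is not strongly unimodular, then $\theta=\dd\log m_{G_\sharp}$ is a nonzero bi-invariant closed $1$-form on $G_\sharp$; its pullback under the developing map of $\widehat\F$ is $\Gamma$-invariant (by left-invariance of $\theta$) and descends to a nowhere-vanishing closed $1$-form $\widehat\omega$ on $\widehat M$. Tischler's theorem, applied after a rational approximation of $[\widehat\omega]$, yields the fibration $\widehat M\to S^1$; if additionally $\theta|_{\kk_0}=0$, then $\widehat\omega$ descends along $\pi$ to $M$, giving the sharper conclusion $M\to S^1$. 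If instead $\overline\Gamma$ is not unimodular, the same scheme applied to $\dd\log m_{\overline\Gamma}$---which lives on each leaf closure of $\widehat\F$ via the Fedida model---produces, after push-down by $\pi$, a nowhere-vanishing closed $1$-form on each compact leaf closure $\overline L$ of $\F$, whence $\overline L\to S^1$ by Tischler.

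The principal obstacle is the careful tracking of $\Gamma$- and $\overline\Gamma$-equivariance needed for the pullback $1$-forms to descend as globally defined closed forms on $\widehat M$ (respectively on each $\overline L$); the bi-invariance of the modular $1$-forms, together with the $\Gamma$-equivariance of the developing map, is what makes the descent go through. A subsidiary bookkeeping issue is the split within the first case between ``$\widehat M$ fibers over $S^1$'' and ``$M$ fibers over $S^1$'', governed by whether $\theta$ annihilates $\kk_0$.
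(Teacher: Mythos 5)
Your proposal correctly identifies the overall strategy (contrapositive via the unimodularity criterion, then modular $1$-forms plus Tischler), and the treatment of the case when $G_\sharp$ is not strongly unimodular is essentially sound. A minor remark there: the hedging between ``$\widehat M\to S^1$'' and ``$M\to S^1$'' according to whether $\theta|_{\kk_0}=0$ is unnecessary, because $K_{0\sharp}$ is compact and $\R^+$ has no nontrivial compact subgroups, so $m_{G_\sharp}$ kills $K_{0\sharp}$ automatically and $M$ always fibers in that case.

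The genuine gap is in the second case. When $\overline\Gamma_0$ (equivalently $\overline\Gamma$) is not unimodular, you propose to pull back $\dd\log m_{\overline\Gamma}$ to each leaf closure and apply Tischler. But $\dd\log m_{\overline\Gamma}$ is a left-invariant $1$-form, hence it is determined by its value at the identity, which is the differential of $\log m_{\overline\Gamma}$ on the Lie algebra. If the identity component $(\overline\Gamma_0)_e$ happens to be unimodular while $\overline\Gamma_0$ itself is not (this occurs precisely when the failure of unimodularity is carried by $\pi_0(\overline\Gamma_0)$, e.g. a group like $\{\begin{bmatrix}\lambda^n&t\cr 0&\lambda^{-n}\end{bmatrix}\}\subset\SL(2,\R)$), then $m_{\overline\Gamma_0}$ is trivial on $(\overline\Gamma_0)_e$, the form $\dd\log m_{\overline\Gamma_0}$ vanishes identically, its pullback is zero, and no $\R$-Lie foliation is produced. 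Your argument produces nothing at all in this subcase, and a fortiori does not explain why the Blumenthal bundle appears in the statement of the theorem.

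The paper resolves exactly this case in Proposition \ref{TERCER} via Lemma \ref{CRUCIAL}: since the modular $1$-form is zero, one instead extends the group morphism $m_H\colon H\to\R^+$ (where $H=\pi_0^{-1}(\overline\Gamma_0)\subset\widehat{G_{0\sharp}}$) to a map $m\colon\widehat{G_{0\sharp}}\to\R^+$ with the quasi-morphism property $m(hy)=m(h)m(y)$ for $h\in H$, by realizing $\log\overline m_H\in\mathrm{Hom}(\pi_1(W),\R)$ as a class in $H^1_{DR}(W)$ on the compact quotient $W=\overline\Gamma_0\backslash G_{0\sharp}$ and integrating its pullback on the simply connected group $\widehat{G_{0\sharp}}$. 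This $m$ descends to $m'\colon G_{0\sharp}\to\R^+$, and $\log m'\circ\overline f$ is the developing map of an $\R$-Lie foliation not on $\overline L$ but on the total space $\Gamma_0\backslash f^*(G_{0\sharp})$ of the Blumenthal bundle, which Tischler then fibers over $S^1$. Without this extension argument your proof only covers the subcase where $(\overline\Gamma_0)_e$ is also non-unimodular, which is why you should split the second case exactly as the paper does.
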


As explained in Section \ref{blubundle}, what we call {\em the Blumenthal's fiber bundle} of $\F$ is an auxiliary construction which was defined in \cite{Blumenthal1979} and later studied   in \cite{ElKacimi2001} and \cite{AlvarezNozawa2012}.  It is is a principal $K$-bundle $\overline\rho\colon f^*(G) \to M$, and we prove that its total space is   endowed with a Lie foliation that projects onto the transversely homogeneous foliation $\F$.
If $\F$ is a Lie foliation then $\overline\rho$ is the identity.

The proof of Theorem \ref{generalizaciondelrendus} depends on Tischler's theorem \cite{Tischler1970} about foliations defined by a non-singular closed $1$-form $\omega$ on a compact  manifold. This corresponds to a Lie foliation with $G=\R$, and it happens that $p^*\omega=df$, for the developing map $f$, while the holonomy group $\Gamma$ is the group of periods of the form $\omega$. By deforming $\omega$, Tischler proved that this group turns to be discrete and the manifold $M$ fibers over $S^1$. 

In fact, Theorem \ref{generalizaciondelrendus} can be reformulated as follows: assume that the Lie group $G$ is connected and that the foliation is not unimodular. Also assume that the manifold $M$ and the isotropy group   $K$ are compact. Then either $G$ or $\overline\Gamma$ are not unimodular.
Essentially, we shall use the modular functions of these Lie groups to construct the form $\omega$.

The contents of the paper are as follows. Section \ref{PRELIM} contains preliminaries about homogeneous spaces and unimodular groups, mainly in order to fix our notations. Section \ref{seccionthf} is about the basic definition and properties of 
transversely homogeneous foliations. The main result is Blumenthal's structure theorem, but we state it without assuming that the action of the Lie group $G$ on the manifold $N=G/K$ is effective. This will allow us to do later some explicit constructions using the universal covering  $\widehat N$ of $N$. We also introduce the so-called Blumenthal's fiber bundle, and we discuss the basic notions of Riemannian foliations.

Section \ref{SECTCOHOMOLOGY} is devoted to the relationships between the relative Lie algebra cohomology of the pair $(G,K)$, De Rham cohomology of invariant forms on $N=G/K$ and the base-like  cohomology of the foliation $\F$, including Poincar\'e duality. The main technical result is the injectivity  result in Theorem \ref{morfismo}. Then we prove the main Theorem \ref{MAININTRO} and we give an example with $G=\SL(n,\R)$. 

In the first part of the paper  we do not assume that the group $G$ is connected.
But the results are limited to transversely homogeneous foliations where the developing map has connected fibers. 

In order to go further, we introduce in Section \ref{NONCONN} what we call the ``extended group''.
It is the smallest group containing $G$ such that  the original $N$-transversely homogeneous foliation can be given a structure of $\widehat N$-transversely homogeneous foliation. This construction is less restrictive than a similar one in Blumenthal's paper \cite{Blumenthal1979}, where he considers the whole group of isometries of $N$. We also need to reformulate Blumenthal results \cite[Theorem 3.ii)]{Blumenthal1979} about the closure $\overline L$ of each leaf $L$, in such a way that the foliation $\F$, when restricted to $\overline L$, is also a transversely homogeneous foliation modeled by a homogeneous space where the group that acts transitively is a subgroup of the extended group.

By applying the results of the first part of the paper to this new foliated structure, we are able to prove  in all generality the characterization of unimodular foliations (Theorem \ref{tercerteoremadeuni}) and Theorem \ref{generalizaciondelrendus} cited above. These results generalize analogous results for Lie foliations that we announced in \cite{Macias2005}.

\begin{remark}About notation:
 in the first part of the paper the Lie group $G$ may not be connected. In the second part, we denote by $G_0$ a {\em connected} Lie group, while $G$ will be an ``extended'' group to which the results of the first part apply.
 \end{remark}


\section{Preliminaries}\label{PRELIM}
In order to fix our notations,
we recall several previous results about Lie groups and homogeneous spaces.
\subsection{Homogeneous spaces} 
Let $G$ be a Lie group, which is not supposed to be neither connected nor simply connected. 
Assume that  $G$ acts transitively on the {\em connected} manifold $N$.  Fix a base point $o\in N$, and denote by $K$ the isotropy group $G_o$, so the map $[g]\in G/K \mapsto g\cdot o\in N$ is a diffeomorphism of $G$-spaces. 

\begin{remark}It can be proved that $G_e$, the connected component of the identity, also acts transitively on $N$. However, in Section \ref{seccionthf} we shall need a non-connected Lie group with an additional condition that $G_e$ does not fulfill. 
 \end{remark}
 
 For $g\in G$ we denote by $\lambda(g)\colon N \to N$ the left translation $\lambda(g)(p)=g\cdot p$.

\begin{definition}\cite{Scott1987} \label{defcore}The {\em normal core} of the action is the kernel, denoted by $\core(K)$, of the morphism $\lambda\colon G \to \Diff(N)$, that is, 
$$\core(K) =\{g\in G \colon \lambda(g)=\id\}.$$
\end{definition}

Notice that the action of $G$ on $N$ is  effective if and only if $\core(K)=\{e\}$.
We list here some properties of the normal core. The proof is easy.

\begin{prop} \label{caractcore}The normal core $\core(K)$ equals:
\begin{enumerate}
\item
the intersection $\bigcap\nolimits_{p\in N} G_{p}$ of the isotropy subgroups.
\item
the intersection $\bigcap\nolimits_{g\in G}gKg\inv$,  of the conjugate subgroups.
\item
the set $\{ k\in K\colon gkg^{-1}\in K \; \forall \, g\in G\}$.
\end{enumerate}
\end{prop}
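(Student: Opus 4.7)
The plan is to establish the three equalities by chaining them together, since the statement just unpacks the definition of $\core(K)$ in progressively more concrete ways. All three identifications are elementary set-theoretic manipulations using (a) the definition of $\lambda(g)$, (b) transitivity of the $G$-action, and (c) the standard conjugation behavior of isotropy subgroups.

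For (1), I would just unwind the definition: saying $\lambda(g) = \id$ means $g \cdot p = p$ for every $p \in N$, which is exactly saying $g \in G_p$ for every $p$. So $\core(K) = \bigcap_{p \in N} G_p$ is immediate.

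For (2), I would use transitivity together with the standard formula $G_{g \cdot o} = g G_o g^{-1}$. Since the action is transitive, every $p \in N$ is of the form $g \cdot o$ for some $g \in G$, and conversely every $g \in G$ produces a point $g \cdot o \in N$. Hence $\bigcap_{p \in N} G_p = \bigcap_{g \in G} G_{g\cdot o} = \bigcap_{g \in G} g K g^{-1}$, giving the second identification.

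For (3), I would argue by inclusion in both directions. If $k \in \bigcap_{g \in G} gKg^{-1}$, then setting $g = e$ forces $k \in K$, and applying the condition with $g^{-1}$ in place of $g$ yields $gkg^{-1} \in K$ for all $g \in G$. Conversely, if $k \in K$ and $gkg^{-1} \in K$ for every $g$, then $k \in g^{-1}Kg$, and letting $g$ range over $G$ (equivalently, $g^{-1}$ ranges over $G$) shows $k \in gKg^{-1}$ for every $g$.

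I do not anticipate any real obstacle: every step is a one-line verification, and the only place where the hypotheses actually intervene is in (2), where transitivity is essential to identify the intersection over all base points with an intersection over conjugates of the fixed isotropy group $K$.
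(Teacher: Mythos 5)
Your argument is correct and is precisely the elementary chain of identifications the paper has in mind when it remarks that "the proof is easy" and omits it: (1) unwinds the definition of $\lambda$, (2) uses transitivity together with $G_{g\cdot o}=gKg^{-1}$, and (3) is a two-sided inclusion using the substitution $g\mapsto g^{-1}$. No gaps; this fills in the omitted proof in the natural way.
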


It follows that  
$\core(K)$ is the largest subgroup of $K$ which is normal in $G$. 
We denote by 
$G_\sharp=G/\core(K)$
the quotient group.

\begin{prop} \label{isotropiagks}
 The induced action of $G_\sharp$ on $N$ is effective, with isotropy $K_\sharp=K/\core(K)$. Hence $N$ is diffeomorphic to $G_\sharp/K_\sharp$.
\end{prop}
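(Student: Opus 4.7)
The proof is essentially bookkeeping about quotients of group actions, so the plan is straightforward. By definition, $\core(K) = \ker\lambda$ is a normal subgroup of $G$, and by Proposition \ref{caractcore}(2) it is contained in $K$. Moreover $\core(K)$ is closed in $G$ (as the kernel of a continuous homomorphism), so $G_\sharp = G/\core(K)$ carries a canonical Lie group structure and $K_\sharp = K/\core(K)$ is a closed subgroup of $G_\sharp$.

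First I would verify that the action $\lambda$ descends. Since every $c \in \core(K)$ satisfies $\lambda(c) = \id$, the map $\bar\lambda\colon G_\sharp \to \Diff(N)$ given by $\bar\lambda([g]) = \lambda(g)$ is well defined and is a homomorphism. Transitivity of $\bar\lambda$ follows at once from transitivity of $\lambda$. For effectiveness, if $\bar\lambda([g]) = \id$, then $g \in \ker\lambda = \core(K)$, so $[g] = e$ in $G_\sharp$; hence $\ker\bar\lambda$ is trivial.

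Next I would identify the isotropy group at the base point $o$. An element $[g] \in G_\sharp$ fixes $o$ iff $g \cdot o = o$, i.e.\ iff $g \in K$, so the stabilizer of $o$ under $\bar\lambda$ is exactly the image of $K$ in $G_\sharp$, which is $K/\core(K) = K_\sharp$.

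Finally, since $G_\sharp$ acts transitively on $N$ with isotropy $K_\sharp$ at $o$, the orbit map $[g]_\sharp \in G_\sharp/K_\sharp \mapsto g \cdot o \in N$ is a well-defined $G_\sharp$-equivariant diffeomorphism, yielding $N \cong G_\sharp/K_\sharp$. There is no real obstacle here; the only point requiring any care is checking that $\core(K)$ is normal in $G$ (immediate from its description as a kernel) and closed, so that the quotients involved are genuine Lie groups and the homogeneous space $G_\sharp/K_\sharp$ carries its standard smooth structure.
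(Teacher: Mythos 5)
Your proof is correct and is the standard bookkeeping argument; the paper in fact omits the proof entirely (it is treated as an easy consequence of Proposition~\ref{caractcore}), and your reasoning is exactly what one would write out. One small remark: rather than invoking continuity of $\lambda\colon G\to\Diff(N)$ (which requires a choice of topology on $\Diff(N)$), it is cleaner to get closedness of $\core(K)$ directly from Proposition~\ref{caractcore}(1), as an intersection $\bigcap_{p\in N}G_p$ of closed isotropy subgroups.
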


\subsection{Unimodular groups}\label{UNIMGRUPO}
Let $\g$ be the Lie algebra of the Lie group $G$. 

\begin{defi}
The  Lie algebra $\g$ is {\em unimodular} if $\trace\ad_X=0$ for all $X\in \g$.
\end{defi}

Every Lie group $G$ admits a non-zero  left invariant measure $\mu$, which is called {\em a Haar measure}. It is unique up to a positive factor.  See for instance \cite{Faraut2008}.

\begin{defi} \label{funcmodular}
The {\em modular function} of  $G$ is the Lie group morphism $m_G\colon G\to (\R^+,\cdot)$ given by
$\mu(Eg)=m_G(g)\mu(E)$
for every Borel set $E\subset G$.
\end{defi}

We say that the group $G$  is {\em unimodular} if $m_G\equiv 1$. Equivalently, the Haar measure is bi-invariant.
\begin{example} Every discrete (or abelian, or compact) Lie group is unimodular.
\end{example}

When $\dim G\geq 1$, Definition \ref{funcmodular} is equivalent to the following one:
\begin{defi} \label{seccionfuncmodular}
The  modular function is given by
$$
m_G(g)=\vert \det \Ad_G(g)\vert.
$$
\end{defi}

We introduce  a new definition, that we shall need later as an hypothesis.
\begin{defi} \label{deffuerteuni}
The Lie group $G$ is  {\em strongly   unimodular} if  $$\det \Ad_G(g)=1,\quad \text{for all\ } g\in G.$$
\end{defi}

Obviously, any {\em connected} unimodular Lie group is strongly unimodular.

\begin{prop} \label{geunimodular}\ 
\begin{enumerate}
\item
The Lie algebra $\g$ is unimodular if and only if the the connected component $G_e$ of the identity   is unimodular.
\item
If the Lie group $G$ is unimodular then $G_e$ is unimodular.
\end{enumerate}
\end{prop}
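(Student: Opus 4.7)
For part (1), my plan is to exploit that $G_e$ is connected and therefore generated as a group by $\exp(\g)$, so every $g\in G_e$ can be written as a finite product $g=\exp X_1\cdots \exp X_k$. Using $\Ad(\exp X)=e^{\ad_X}$, multiplicativity of determinant, and the identity $\det e^A=e^{\trace A}$, one obtains
$$\det\Ad_{G_e}(g) \;=\; \exp\Bigl(\sum_{i=1}^k \trace\ad_{X_i}\Bigr) \;>\; 0.$$
This positivity removes the absolute value in Definition \ref{seccionfuncmodular}, so on $G_e$ the modular function equals $\det\Ad_{G_e}$ itself. If $\g$ is unimodular, every $\trace\ad_{X_i}$ vanishes and $m_{G_e}\equiv 1$, hence $G_e$ is unimodular. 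Conversely, if $G_e$ is unimodular then $m_{G_e}(\exp tX)=e^{t\,\trace\ad_X}\equiv 1$ for every $X\in\g$, and differentiating at $t=0$ yields $\trace\ad_X=0$, so $\g$ is unimodular.

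For part (2), the plan is to show that the modular function is well behaved under restriction to open subgroups, and then apply this to $G_e\subset G$. Fix a left Haar measure $\mu$ on $G$; since $G_e$ is open in $G$, the restriction $\mu|_{G_e}$ is a left Haar measure on $G_e$. For $g\in G_e$ and a Borel set $E\subset G_e$ we have $Eg\subset G_e$, so Definition \ref{funcmodular} applied in $G$ gives $\mu|_{G_e}(Eg)=\mu(Eg)=m_G(g)\,\mu(E)=m_G(g)\,\mu|_{G_e}(E)$. By uniqueness of the modular function, $m_{G_e}(g)=m_G(g)$ for every $g\in G_e$, and hence $m_G\equiv 1$ forces $m_{G_e}\equiv 1$.

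There is no serious obstacle here. The only subtle point is the positivity of $\det\Ad$ on the connected component in (1), which is what allows the characterization via $\trace\ad_X$ to go through cleanly by a derivative argument. The degenerate case $\dim G=0$ is trivial since then $\g=0$ and $G_e=\{e\}$ are both unimodular in every reasonable sense, so both parts hold vacuously.
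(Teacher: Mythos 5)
The paper states this Proposition without proof (it is a standard fact about Lie groups), so there is no paper argument to compare against; I can only assess your proposal on its own. Both parts of your argument are correct. In (1), the key observation that $\det\Ad_{G_e}(g)>0$ for $g$ in the connected component, via writing $g$ as a finite product of exponentials and using $\det e^{\ad_X}=e^{\trace\ad_X}$, is exactly right, and it cleanly removes the absolute value so that the derivative argument at $t=0$ finishes both directions. In (2), your Haar-measure argument is correct: since $G_e$ is open, the restriction of a left Haar measure on $G$ is a left Haar measure on $G_e$, and right translation by $g\in G_e$ preserves $G_e$, so $m_{G_e}=m_G\vert_{G_e}$. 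A slightly shorter route to (2) is to use Definition \ref{seccionfuncmodular} directly together with the fact that $\Ad_{G_e}$ is the restriction of $\Ad_G$ to $G_e$ (they agree because $\Ad$ is determined by conjugation near the identity), giving $m_{G_e}(g)=\vert\det\Ad_{G_e}(g)\vert=\vert\det\Ad_G(g)\vert=m_G(g)$ at once; but your measure-theoretic version is equally valid and more self-contained with respect to Definition \ref{funcmodular}.
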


\begin{ejem}
The converse is not true when $G$ is not connected. 
For instance, for a fixed $\lambda\in(0,\infty)$, consider  the subgroup of  $\SL(2,\R)$ defined as
$$
G=\{\begin{bmatrix}\lambda^n&t\cr 0&\lambda^{-n}\end{bmatrix}\colon n\in\Z,\; t\in\R\}.
$$
The modular function  is $m_G(n,t)=\lambda^{2n}$, so, in general, the Lie group $G$ is not unimodular. However, its connected component $G_e=\R$ is unimodular.
\end{ejem}

\begin{prop}\label{funcionmodulardee2}
For a covering $p\colon G\to G^\prime$ of Lie groups (that is, a surjective Lie group morphism with discrete kernel),  we have 
$$\det \Ad_G(g)=\det \Ad_{G^\prime}(p(g)), \quad \forall  g\in G.$$
\end{prop}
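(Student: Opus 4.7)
The plan is to exploit the naturality of the adjoint representation under Lie group morphisms. First I would observe that since $\ker p$ is discrete, the projection $p$ is a local diffeomorphism, and in particular its differential at the identity $dp_e\colon \g \to \g'$ is a Lie algebra isomorphism between the Lie algebras of $G$ and $G'$.

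Next, I would compare the inner automorphisms on both sides. For $g\in G$, let $c_g(h)=ghg\inv$ and, analogously, $c_{p(g)}(h')=p(g)h'p(g)\inv$ on $G'$. Because $p$ is a group homomorphism, $p\circ c_g = c_{p(g)}\circ p$. Differentiating this identity at $e$ yields the intertwining relation
\[
dp_e\circ \Ad_G(g)=\Ad_{G'}(p(g))\circ dp_e.
\]

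Since $dp_e$ is a linear isomorphism, this shows that $\Ad_G(g)$ and $\Ad_{G'}(p(g))$ are conjugate endomorphisms of finite-dimensional vector spaces, so they share all invariants of linear conjugation, in particular the determinant. This gives $\det\Ad_G(g)=\det\Ad_{G'}(p(g))$, as required.

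There is essentially no obstacle here; the statement reduces to the standard functoriality of $\Ad$ together with the fact that a surjective Lie group morphism with discrete kernel induces an isomorphism on Lie algebras. Note that neither $G$ nor $G'$ need be assumed connected: only the discreteness of $\ker p$ is used, in order to guarantee that $dp_e$ is invertible.
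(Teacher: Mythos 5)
Your proof is correct and is the standard argument: naturality of the adjoint representation under Lie group morphisms, combined with the fact that a surjective morphism with discrete kernel induces an isomorphism $dp_e\colon\g\to\g'$, so that $\Ad_G(g)$ and $\Ad_{G'}(p(g))$ are conjugate and hence share a determinant. The paper states this proposition without proof, so there is no authorial argument to compare against; your reasoning is exactly what the authors would be implicitly relying on, and your remark that connectedness of $G$ and $G'$ is not needed is apt, since the paper later applies this to non-connected groups.
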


\section{Transversely homogeneous foliations} \label{seccionthf}
In this section we give the fundamental definitions and results about transversely homogeneous foliations.  The main ``structure theorem'' \ref{desth} is due to Blumenthal  \cite{Blumenthal1979}, which stated it   when the action of $G$ on $N$ is effective.   

\subsection{Structure theorem}\label{stth}

Let $N=G/K$ be a {\em connected} $G$-homogeneous space (the Lie group may not be connected)  and let $(M,\F)$ be a foliated differentiable manifold.

\begin{definition} \label{subth} The foliation $\F$ on $M$ is {\em transversely homogeneous} with transverse model  $N$  if it is defined by a family of submersions
$f_\alpha\colon  U_\alpha\subset M \to  N$,  which satisfy:
\begin{enumerate}
\item
$\{ U_\alpha \}$ is an open covering of $M$;
\item
if $U_\alpha\cap U_\beta\not =\emptyset$ then $f_\alpha=\lambda(g_{\alpha\beta})\circ f_\beta$ on $f_\alpha\inv(U_\alpha\cap U_\beta)$, for some $g_{\alpha\beta}\in G$.
\end{enumerate}
\end{definition}

\begin{theorem}[Structure theorem \cite{Blumenthal1979}]
\label{desth} 
Let $\F$ be a transversely homogeneous foliation on the manifold $M$. There exists a regular covering   $p\colon \widetilde M\to M$ such that
\begin{enumerate}
\item
the automorphism group $\Aut (p)$ of the covering is isomorphic to a subgroup  $\Gamma$ of $G_\sharp=G/\core(K)$;
\item
the lifted foliation $\widetilde F=p^*\F$ is the simple foliation $f^*\pt$ associated to some submersion $f\colon \widetilde M\to N$;
\item
the submersion $f$ is {\em equivariant} by the isomorphism $h\colon \Aut (p)\cong \Gamma$, that is, $f(\gamma\cdot\widetilde x)=h(\gamma) f(\widetilde x)$, for all $\widetilde x\in \widetilde M$ and all $\gamma\in \Aut (p)$.
\end{enumerate}
Conversely, if $\F$ is a foliation on $M$ for which there exists a regular covering satisfying the three properties above, then   $\F$ is an transversely homogeneous foliation. 
\end{theorem}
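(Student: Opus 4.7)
The plan is to adapt the classical developing map construction for $(G,X)$-structures to the foliated setting, taking care that the $G$-action on $N$ need not be effective. The guiding idea is that the transition elements $g_{\alpha\beta}\in G$ are determined only modulo $\core(K)$, so holonomy can be recorded naturally only in $G_\sharp$; the covering $\widetilde M$ must be chosen large enough to trivialize this $G_\sharp$-valued holonomy, while $G$-valued local data are recovered at the end by lifting through the quotient $G\to G_\sharp$.

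I would first extract a holonomy cocycle from the defining submersions. Because $\lambda$ factors through an injection $G_\sharp\hookrightarrow \Diff(N)$ by Proposition \ref{isotropiagks}, each class $[g_{\alpha\beta}]\in G_\sharp$ is unambiguous. On triple overlaps, the identity $\lambda(g_{\alpha\beta}g_{\beta\gamma})=\lambda(g_{\alpha\gamma})$ holds on the open set $f_\gamma(U_\alpha\cap U_\beta\cap U_\gamma)\subset N$, and two left translations of the connected manifold $N$ that agree on a nonempty open set must coincide, so $[g_{\alpha\gamma}]=[g_{\alpha\beta}][g_{\beta\gamma}]$ strictly in $G_\sharp$. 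Concatenating along a chain of charts covering a loop, together with the usual refinement and homotopy arguments, yields a homomorphism $h\colon \pi_1(M,x_0)\to G_\sharp$. I would set $\Gamma:=\im h$ and take $p\colon \widetilde M\to M$ to be the regular covering with $p_*\pi_1(\widetilde M)=\ker h$, so that tautologically $\Aut(p)\cong \pi_1(M)/\ker h\cong \Gamma$, yielding (1). To produce the developing map $f\colon \widetilde M\to N$, I would fix a sheet $\widetilde U_{\alpha_0}$ above $U_{\alpha_0}$, set $f=f_{\alpha_0}\circ p$ there, and extend along a path by prepending, at each step, a representative in $G$ of the relevant $[g_{\alpha\beta}]$ so that values agree on the overlap just crossed. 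Because the pullback of the cocycle $\{[g_{\alpha\beta}]\}$ to $\widetilde M$ is a coboundary---this is exactly why $\widetilde M$ was chosen via $\ker h$---the construction is independent of path and chain, producing a well-defined submersion. Equivariance $f(\gamma\cdot\widetilde x)=h(\gamma)\,f(\widetilde x)$ is immediate, since translating by a deck transformation amounts to prepending the holonomy of a loop representing it; and $p^*\F=f^*\pt$ follows because on each sheet $f$ coincides with $f_\alpha$ up to a left translation, which preserves the simple foliation.

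For the converse, given a regular covering $p$ and an equivariant submersion $f$, I would cover $M$ by evenly covered open sets $U_\alpha$ admitting sections $s_\alpha\colon U_\alpha\to \widetilde M$ of $p$, put $f_\alpha:=f\circ s_\alpha$, and observe that on an overlap the sections $s_\alpha,s_\beta$ differ by a unique deck transformation $\gamma_{\alpha\beta}\in \Aut(p)$. Equivariance then gives $f_\alpha=\lambda(h(\gamma_{\alpha\beta}))\circ f_\beta$, and any lift $g_{\alpha\beta}\in G$ of $h(\gamma_{\alpha\beta})\in G_\sharp$ through the surjection $G\to G_\sharp$ verifies Definition \ref{subth}. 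The main obstacle lies in the consistency of the analytic continuation defining $f$: it is single-valued on $\widetilde M$ precisely because the cocycle multiplies to the identity in $G_\sharp$ along every loop in $\widetilde M$. This is where the distinction between $G$ and $G_\sharp$ matters most, and where the precise choice of $\widetilde M$ as the covering associated to $\ker h$---rather than, say, the universal cover or a covering matched to a $G$-valued lift of the cocycle---becomes indispensable.
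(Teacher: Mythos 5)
The paper does not actually prove Theorem \ref{desth}; it states it with a reference to Blumenthal's paper and remarks only that the statement is given here in the slightly more general non-effective setting (hence the appearance of $G_\sharp$ in place of $G$). Your argument is therefore a genuine fill-in, and in outline it is the correct one: it is the developing-map/monodromy construction, which is what Blumenthal himself uses, adapted to quotient out by $\core(K)$ so that the transition data live in $G_\sharp$ rather than $G$. The converse direction (sections $s_\alpha$, deck transformations $\gamma_{\alpha\beta}$, lifting $h(\gamma_{\alpha\beta})\in G_\sharp$ back to $G$) is also the right argument.

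There is one step you assert but do not justify, and it is exactly the step that makes the $G_\sharp$-valued cocycle legitimate: the claim that two left translations $\lambda(g)$, $\lambda(g')$ of $N$ that agree on a nonempty open set must agree on all of $N$. Without this, the identity $\lambda(g_{\alpha\beta}g_{\beta\gamma})=\lambda(g_{\alpha\gamma})$ is established only on the open set $f_\gamma(U_\alpha\cap U_\beta\cap U_\gamma)$ and does not yield the strict cocycle relation $[g_{\alpha\gamma}]=[g_{\alpha\beta}][g_{\beta\gamma}]$ in $G_\sharp$; equivalently, it does not let you pass from germinal holonomy to elements of $G_\sharp$. The claim is true, but it is not formal: it relies on the fact that $N=G/K$ carries a real-analytic structure for which the $G$-action is real-analytic, so that $\lambda(g^{-1}g')$ is a real-analytic self-map of the connected manifold $N$ which agrees with the identity on an open set, hence equals the identity globally. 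This is a standard fact about homogeneous spaces, but it should be stated and invoked explicitly, since it is the only place in the argument where the homogeneous (as opposed to merely pseudogroup) structure of the transverse model is used in an essential way.

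A smaller point: your description of the chain/homotopy argument producing $h\colon\pi_1(M,x_0)\to G_\sharp$, and of the analytic continuation yielding a single-valued $f$ on $\widetilde M$, is compressed. The well-definedness up to refinement of chains and up to homotopy of loops both reduce to the strict cocycle relation above, so once that relation is justified the rest is indeed routine; but it would be worth saying explicitly that this is where the cocycle condition enters.
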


The group $\Gamma$ and the submersion $f$ are called the {\em holonomy group} and the {\em developing map} of the foliation, respectively.

\begin{remark}\label{CONECCTEDFIB}
Sometimes, a larger covering than $\widetilde M$ will be considered, for instance the universal covering. If $\widetilde p\colon \widehat M\to \widetilde M$ is a regular covering, then the composition $f\circ \widetilde p\colon \widehat M \to N$  is equivariant by the epimorphism $$\Aut (p\circ \widetilde p)\to\Aut (p)\cong\Gamma\subset G_\sharp.$$ 

Conversely, if there exist a covering $\widehat p\colon \widehat M\to M$, a morphism $\widehat h\colon\Aut(\widehat p)\to G_\sharp$,  and a $\widehat h$-equivariant submersion $\widehat f\colon \widehat M\to  N$, then, for  the covering $p\colon \widetilde M\to M$ associated to the kernel of $\widehat h$ there is an induced submersion $f\colon \widetilde M\to N$, which is invariant by the induced isomorphism  $ \Aut p\cong \im \widehat h=\Gamma\subset G_\sharp$.

On the other hand, if the developing map $\widehat f\colon \widehat M\to  N$ has connected fibers then $f\colon \widetilde M\to N$ will have connected fibers too. Analogously, if the fibers of $\widehat f$ have a finite number of connected components then so has  $f$.
\end{remark}

\begin{example}\label{liefolprimeraparte} A {\em Lie foliation} \cite{Macias1994} is a transversely homogeneous foliation with  model  $N= G$ a Lie group; that is, the subgroup $K$ is trivial. The structure theorem for Lie foliations was proved by Fedida \cite{Fedida1973}.
\end{example}

\subsection{The Blumenthal bundle} \label{blubundle}
Blumenthal    \cite[\S 3]{Blumenthal1979} introduced a principal  bundle  associated to each transversely homogeneous foliation $\F$  (for an effective action).
El Kacimi, Guasp and Nicolau \cite{ElKacimi2001}  studied this fiber bundle 
for a type of foliations where  that bundle is trivial. 

Taking into account Proposition \ref{isotropiagks},
we consider the pullback of the canonical projection $\pi_\sharp\colon G_\sharp \to N=G_\sharp/K_\sharp$ by the developing map $f\colon \widetilde M\to N$.  That is,
$$f^*(G_\sharp)=\{ (\widetilde x,g)\in \widetilde M\times G_\sharp\colon f(\widetilde x)=\pi_\sharp(g)\}.$$ 
Let  $\rho\colon f^*(G_\sharp) \to \widetilde M$ and  $\overbar f\colon f^*(G_\sharp) \to G_\sharp$ be  the maps   induced by the  projections. We have 
$$\pi_\sharp\circ \overbar f=f\circ \rho.$$

\begin{prop} \label{taucubiertaregular}
\begin{enumerate}
\item
The action of $\Aut p\stackrel{h}\cong \Gamma$ on $f^*(G_\sharp)$, defined by   
$$\gamma\cdot(\widetilde x,g)=(\gamma\cdot \widetilde x,h(\gamma) g)$$
is free, properly discontinuous and transitive on the fibers. 
\item
As a consequence, the projection  
$$\tau\colon f^*(G_\sharp)\to  \Gamma\backslash f^*(G_\sharp)$$ 
onto the orbit space is a regular covering, with deck group $\Gamma$. 
\item
Moreover the map   $\rho\colon f^*(G_\sharp) \to \widetilde M$  is equivariant.
\item
The map   $\overbar\rho\colon \Gamma\backslash f^*(G_\sharp)\to M$ is a principal bundle with structure group  $K_\sharp$. 
\end{enumerate}
\end{prop}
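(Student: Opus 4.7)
The plan is to view $f^*(G_\sharp)$ as the pullback of the principal $K_\sharp$-bundle $\pi_\sharp\colon G_\sharp\to N$ along the developing map $f$, and to derive all four assertions by combining this bundle structure with the $h$-equivariance of $f$. First, for part~(1), I would verify that the formula $\gamma\cdot(\widetilde x,g)=(\gamma\widetilde x,h(\gamma)g)$ really defines an action on $f^*(G_\sharp)$: the defining condition $f(\widetilde x)=\pi_\sharp(g)$ transforms into
\[
f(\gamma\widetilde x)=h(\gamma)f(\widetilde x)=h(\gamma)\pi_\sharp(g)=\pi_\sharp(h(\gamma)g),
\]
by equivariance of $f$ together with the left-invariance of $\pi_\sharp$, so the pair lies again in $f^*(G_\sharp)$. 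Freeness and proper discontinuity are then inherited from the corresponding properties of $\Aut(p)$ acting on $\widetilde M$: inspection of the first coordinate alone forces $\gamma=e$ and produces saturated product neighborhoods from evenly covered ones in $\widetilde M$. Transitivity on orbits is tautological, the orbits being by definition the fibers of $\tau$.

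Parts~(2) and~(3) then follow quickly. By the standard criterion for quotients by free, properly discontinuous actions, $\tau$ is a regular covering with deck group $\Gamma$, which is~(2). The equivariance in~(3) is transparent from the formulas, since $\rho(\gamma\cdot(\widetilde x,g))=\gamma\widetilde x=\gamma\cdot\rho(\widetilde x,g)$.

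For part~(4) I would introduce the right $K_\sharp$-action $(\widetilde x,g)\cdot k=(\widetilde x,gk)$ on $f^*(G_\sharp)$. It is well defined because $\pi_\sharp(gk)=\pi_\sharp(g)=f(\widetilde x)$, and it is free and properly discontinuous since it is obtained by pullback from the principal $K_\sharp$-action on $\pi_\sharp\colon G_\sharp\to N$. The crucial observation is that this right action commutes with the left $\Gamma$-action, because in $G_\sharp$ left translation by $h(\gamma)$ commutes with right translation by $k$; consequently $K_\sharp$ descends to a free action on $\Gamma\backslash f^*(G_\sharp)$. Since the $K_\sharp$-orbits in $f^*(G_\sharp)$ are exactly the fibers of $\rho$, and these survive intact under the $\Gamma$-quotient by commutativity, $\overline\rho$ is precisely the quotient by this residual $K_\sharp$-action. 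Local trivializations are then produced by pulling back local sections of $\pi_\sharp$ through $f$ and pushing them down via $\tau$.

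The main technical step will be the last one: checking that the local trivializations of $\rho$ over evenly covered neighborhoods in $\widetilde M$ descend coherently along both $p$ and $\tau$ to yield local trivializations of $\overline\rho$ over $M$. This is essentially bookkeeping, relying on simultaneous choice of evenly covered neighborhoods for $p$ in $\widetilde M$ and for $\pi_\sharp$ in $N$, together with the commutation of the two actions; no genuinely new ingredient is required beyond what is already encoded in the hypotheses of Theorem~\ref{desth}.
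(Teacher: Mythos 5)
Your proof is correct. The paper states Proposition \ref{taucubiertaregular} without proof, so there is no explicit argument to compare against; your route — regarding $f^*(G_\sharp)$ as the pullback of the principal $K_\sharp$-bundle $\pi_\sharp\colon G_\sharp\to N$, deriving freeness and proper discontinuity of the $\Gamma$-action from the first coordinate, observing that the right $K_\sharp$-action commutes with the left $\Gamma$-action, and assembling local trivializations of $\overline\rho$ by choosing neighborhoods simultaneously evenly covered for $p$ and trivializing for $\pi_\sharp$ — is the standard and natural argument, and exactly what the authors are implicitly invoking.
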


We shall call $\overbar\rho$ it the {\em Blumenthal bundle} of the foliation.

\begin{remark}As pointed out by Blumenthal, the lifted foliation $\overline\rho\,^*(\F)$ on $\Gamma\backslash f^*(G_\sharp)$ equals the projection, by the covering map $\tau$, of the foliation $\overline f\,^*(\F_0)$ on $f^*(G_\sharp)$, where $\F_0$ is the foliation on the Lie group $G_\sharp$ by  (the connected components of) the cosets of the subgroup $K_\sharp$. We have $\codim \overline f\,^*(\F_0)=\codim \F =\dim G/K$.

Notice that there is another foliation on the total space $\Gamma\backslash f^*(G_\sharp)$, namely the projection by $\tau$ of $\overline f^*\pt$. Its codimension equals $\dim G$, so  its dimension equals $\dim\F$. It is a Lie foliation, whose associated transversely homogeneous foliation is $\overline\rho\,^*(\F)$.
\end{remark}
 
\subsection{Riemannian foliations}
In this paragraph we assume that the normal core $K_\sharp$ is compact. 
This assumption 
has important consequences. First, there exists a Riemannian metric on $N$ which is $G_\sharp$-invariant.
It follows that there exists a metric on $M$ which is a bundle-like metric for the foliation $\F$, that is, $\F$ is a Riemannian foliation (see for instance  the proof of Theorem 4.1. in \cite{Blumenthal1979}).
That metric on $M$ lifts to a $\Gamma$-invariant metric on the covering $\widetilde M$, which is a bundle-like metric for the lifted foliation  $\widetilde \F$.
By construction of the metrics above it follows that the developing $p\colon \widetilde M\to M$ is a Riemannian submersion. Then  Hermann's Theorem 1 in \cite{Hermann1960} for Riemannian submersions between complete manifolds applies if $M$ is compact.

\begin{proposition} \label{friemannianasikcompacto}
If $M$ and $K_\sharp$ are compact, then the  developing submersion   $f\colon \widetilde M\to N$ is a locally trivial bundle (in particular, the map is surjective).
\end{proposition}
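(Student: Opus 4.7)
The plan is to equip $\widetilde{M}$ with a complete Riemannian metric that makes $f\colon \widetilde{M}\to N$ a Riemannian submersion, and then invoke Hermann's theorem as the author already announces. The proposition is essentially a verification that the hypotheses of that theorem are in force.

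First I would produce the metrics. Since $K_\sharp$ is compact, averaging an arbitrary inner product on $T_o N$ over $K_\sharp$ yields a $K_\sharp$-invariant scalar product, which transports by $G_\sharp$-translations to a $G_\sharp$-invariant Riemannian metric $g_N$ on $N=G_\sharp/K_\sharp$. With respect to $g_N$ every left translation $\lambda(g)\colon N\to N$, $g\in G_\sharp$, is an isometry. Using a foliated atlas $\{(U_\alpha, f_\alpha)\}$ defining $\F$ and a subordinate partition of unity, one builds a bundle-like metric $g_M$ on $M$ whose transverse component pulls back $g_N$ through each $f_\alpha$; this is globally consistent precisely because the transition maps $\lambda(g_{\alpha\beta})$ preserve $g_N$. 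Thus each $f_\alpha$ becomes a Riemannian submersion onto its image in $(N,g_N)$.

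Next I would pass to the covering. Lifting $g_M$ by the covering $p\colon \widetilde{M}\to M$ (which is a local isometry and on which $\Gamma$ acts by deck transformations) gives a $\Gamma$-invariant bundle-like metric $g_{\widetilde{M}}$ on $\widetilde{M}$. Since $M$ is compact, $g_M$ is a complete Riemannian metric, and because coverings of complete Riemannian manifolds are complete, so is $(\widetilde{M}, g_{\widetilde{M}})$. On each slice of the trivialisation of $p$ over $U_\alpha$ the developing map satisfies $f=\lambda(\gamma)\circ f_\alpha\circ p$ for some $\gamma\in\Gamma$, where both $\lambda(\gamma)$ and the local factor $f_\alpha\circ p$ are Riemannian submersions onto $(N,g_N)$. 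Hence $f$ is globally a Riemannian submersion.

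Finally I would invoke Hermann's theorem \cite{Hermann1960}: a Riemannian submersion from a complete connected Riemannian manifold onto a connected Riemannian manifold is surjective and a locally trivial fiber bundle. With $\widetilde{M}$ complete and connected and $N$ connected by assumption, this delivers both the surjectivity and the local triviality of $f$. The main subtle point, and the only genuine obstacle, is verifying that the glueing of the local metrics produces a bundle-like metric for which $f$ is globally a Riemannian submersion; once that is settled, Hermann's theorem yields the proposition essentially for free.
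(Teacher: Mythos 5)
Your argument is correct and is essentially the one the paper itself gives: the paper's paragraph preceding the proposition builds a $G_\sharp$-invariant metric on $N$ (using compactness of $K_\sharp$), a bundle-like metric on $M$, lifts it $\Gamma$-invariantly to $\widetilde M$, observes that $f$ becomes a Riemannian submersion, and invokes Hermann's theorem with completeness supplied by compactness of $M$. You fill in the averaging and partition-of-unity details and spell out why $f$ is globally a Riemannian submersion via $f=\lambda(\gamma)\circ f_\alpha\circ p$, but the route is the same.
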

 
\begin{prop} \label{fblucompacto}
If the manifold $M$   and the group $K_\sharp$ are both compact, then
\begin{enumerate}
\item \label{fblucompacto1}
the total space $\Gamma\backslash f^*(G_\sharp)$ of the Blumenthal bundle is compact;
\item \label{fblucompacto2}
the quotient manifold $W=\overline \Gamma\backslash G_\sharp$ of the Lie group $G_\sharp$ by the closure $\overline \Gamma$ of the holonomy group $\Gamma$ is compact. 
\end{enumerate} 
\end{prop}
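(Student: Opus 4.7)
The plan is to derive both statements from the two bundle structures already established. Part (1) uses the Blumenthal principal bundle $\overbar\rho\colon \Gamma\backslash f^*(G_\sharp)\to M$; part (2) then follows by pushing this compactness forward along the second projection.

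\textbf{Part (1).} By Proposition \ref{taucubiertaregular}(4), $\overbar\rho$ is a principal $K_\sharp$-bundle. Since $M$ is compact, I would cover it by finitely many open trivializing sets $U_1,\dots,U_r$ over which $\overbar\rho^{-1}(U_i)\cong U_i\times K_\sharp$, and then choose precompact open sets $V_i$ with $\overline{V_i}\subset U_i$ still covering $M$. The preimage $\overbar\rho^{-1}(\overline{V_i})$ is then homeomorphic to the compact set $\overline{V_i}\times K_\sharp$ (using the hypothesis that $K_\sharp$ is compact). Hence $\Gamma\backslash f^*(G_\sharp)$ is a finite union of compact subsets, so it is compact.

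\textbf{Part (2).} I would use the second projection $\overbar f\colon f^*(G_\sharp)\to G_\sharp$. By the explicit form of the $\Gamma$-action given in Proposition \ref{taucubiertaregular}(1), namely $\gamma\cdot(\widetilde x,g)=(\gamma\cdot\widetilde x,h(\gamma)g)$, the map $\overbar f$ is equivariant with respect to the left translation action of $\Gamma$ on $G_\sharp$. Composing with the continuous projection $G_\sharp\to \overbar\Gamma\backslash G_\sharp$ yields a continuous map which is not just $\Gamma$-equivariant but $\Gamma$-invariant (because $\Gamma\subset\overbar\Gamma$), and therefore it factors through the orbit space to give a continuous map
$$\Phi\colon \Gamma\backslash f^*(G_\sharp)\longrightarrow \overbar\Gamma\backslash G_\sharp.$$
Since $M$ and $K_\sharp$ are compact, Proposition \ref{friemannianasikcompacto} applies and guarantees that the developing map $f\colon \widetilde M\to N$ is surjective. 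Hence for every $g\in G_\sharp$ I can choose $\widetilde x\in\widetilde M$ with $f(\widetilde x)=\pi_\sharp(g)$, so that $(\widetilde x,g)\in f^*(G_\sharp)$ projects to $g$. This makes $\overbar f$ surjective, and consequently $\Phi$ is surjective. By part (1) the domain of $\Phi$ is compact, hence $W=\overbar\Gamma\backslash G_\sharp$ is compact as well.

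The arguments are mostly bookkeeping; the one place where care is needed is in checking that $\Phi$ is well-defined and surjective. Well-definedness rests on the explicit action formula in Proposition \ref{taucubiertaregular}(1), and surjectivity rests on the surjectivity of the developing submersion supplied by Proposition \ref{friemannianasikcompacto}, which is precisely where the compactness of $M$ and $K_\sharp$ enters the second statement.
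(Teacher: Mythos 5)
Your proof is correct and follows essentially the same route as the paper: part~(1) by observing that a fiber bundle with compact fiber over a compact base is compact (the paper phrases this as $\overbar\rho$ being proper), and part~(2) by descending $\overbar f$ to the map $\Phi\colon \Gamma\backslash f^*(G_\sharp)\to \overbar\Gamma\backslash G_\sharp$ and using surjectivity of the developing map via Proposition~\ref{friemannianasikcompacto}. The only superficial difference is that you spell out the finite-cover argument for compactness rather than simply invoking properness.
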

\begin{proof}

1)  If the fiber bundle $\overline \rho$ has  compact fibers, then it is a proper map.

2) The hypothesis imply that the developing map $f=\pi \circ \overline f$ is a locally trivial bundle (see Proposition \ref{friemannianasikcompacto} below), hence it is a surjective map. This implies the surjectiveness of $\overline f \colon f^*(G_\sharp ) \to  G_\sharp$. Define
$$
\varphi\colon \Gamma\backslash f^*G_\sharp \to W=\overline\Gamma\backslash G_\sharp,
$$
by 
$$\varphi(x)=[\overline f(\widetilde x)],$$
where  $\widetilde x\in f^*(G_\sharp)$ verifies $\tau(\widetilde x)=x$. This map is well defined and continuous, and it is surjective by the surjectiveness of $\overline f$. Then $W$ is compact.
\end{proof}

\section{Cohomology}\label{SECTCOHOMOLOGY}
In this section we study the relationship between the De Rham invariant cohomology of the homogeneous space $N=G/K=G_\sharp/K_\sharp$ and the Lie algebra cohomology of the reductive pair $(\g,K_\sharp)$, including Poincar\'e duality. We will follow Section VII.9 of Knapp's book \cite{Knapp1988} and Hazewinkel's paper \cite{Hazewinkel1970}, with some slight changes.

\subsection{Relative Lie algebra cohomology}
 As it is well known, when the Lie algebra $\g$ is unimodular its cohomology verifies the Poincar\'e duality $H^r(\g;\R)\cong H^{n-r}(\g;\R)$, for $n=\dim\g$. In our context we need a much more general result about relative cohomology. 
For the sake of completeness we include the basic definitions and results but we will skip the details of the proofs.

\subsubsection{Reductive pairs}

We denote by $\g$ and $\kk$ the Lie algebras of $G_\sharp$ and $K_\sharp$, respectively. 

\begin{defi}
The pair $(\g,K_\sharp)$ is {\em  reductive} if there exist a vector subspace  $\p\subset\g$  such that $\g=\kk\oplus\p$ and $\Ad_{G_\sharp}(k)(\p)\subset \p$, for all  $k\in K_\sharp$.
\end{defi}

When $G_\sharp$ is connected, the last condition is equivalent to  $[\kk,\p]\subset \p$.

\begin{prop}\cite[Proposition 3.16]{Cheeger1975}\label{EFFISOM}
If the action of $G_\sharp$ on $N=G_\sharp/K_\sharp$ is effective and by isometries, then the pair $(\g,K_\sharp)$ is reductive.
\end{prop}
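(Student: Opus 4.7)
The plan is to construct $\p$ as the orthogonal complement of $\kk$ in $\g$ with respect to an $\Ad_{G_\sharp}(K_\sharp)$-invariant inner product on $\g$, obtained by Haar-averaging any fixed inner product over $K_\sharp$. This reduces the whole question to showing that $K_\sharp$ is compact: if so, both the invariant inner product and its orthogonal splitting come for free.

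To prove compactness of $K_\sharp$, I would first set up the isotropy representation $\rho\colon K_\sharp\to \GL(T_oN)$, $\rho(k)=d\lambda(k)_o$. Because the $G_\sharp$-invariant metric on $N$ makes every $\lambda(k)$ an isometry fixing $o$, $\rho$ actually lands in the compact group $O(T_oN)$. Next I would check that $\rho$ is injective: if $\rho(k)=\id$, then $\lambda(k)$ is an isometry of the connected manifold $N$ with trivial differential at $o$, and by the classical rigidity of isometries $\lambda(k)=\id_N$; effectiveness ($\core(K_\sharp)=\{e\}$) then forces $k=e$.

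To upgrade this injective homomorphism into a compact embedding, I would invoke the Myers--Steenrod theorem: $\mathrm{Isom}(N)$ is a Lie group with compact stabilizers $\mathrm{Isom}(N)_o$ which acts properly on $N$. The action gives an injective continuous homomorphism $G_\sharp\hookrightarrow\mathrm{Isom}(N)$ which restricts on the closed subgroup $K_\sharp$ to a map into $\mathrm{Isom}(N)_o$. The properness of the transitive isometric action of $G_\sharp$ then ensures that $K_\sharp$ maps to a closed, and hence compact, subgroup of $\mathrm{Isom}(N)_o$.

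With $K_\sharp$ compact, I would fix any inner product $B$ on $\g$ and define $\langle X,Y\rangle:=\int_{K_\sharp}B(\Ad_{G_\sharp}(k)X,\Ad_{G_\sharp}(k)Y)\,\dd\mu(k)$ using the normalized Haar measure $\mu$ on $K_\sharp$. This produces an $\Ad_{G_\sharp}(K_\sharp)$-invariant inner product on $\g$, so that $\p:=\kk^\perp$ gives the desired decomposition $\g=\kk\oplus\p$ with $\Ad_{G_\sharp}(K_\sharp)\p\subset\p$. The main obstacle is the compactness step, specifically upgrading the injective isotropy representation into a closed embedding; this is where the properness of the transitive isometric action of $G_\sharp$ does the essential work.
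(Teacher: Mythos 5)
The overall plan (produce an $\Ad_{G_\sharp}(K_\sharp)$-invariant inner product on $\g$ and take $\p=\kk^\perp$) is sound, but the reduction to ``$K_\sharp$ is compact'' introduces a genuine gap. Under the stated hypotheses $K_\sharp$ need \emph{not} be compact. Take $G_\sharp=\R^2\rtimes\Z$, where the generator of $\Z$ acts on $\R^2$ by a rotation through an angle that is an irrational multiple of $2\pi$, acting on $N=\R^2$ with the Euclidean metric. The action is transitive, effective and isometric, yet $K_\sharp\cong\Z$ is non-compact (the pair is still reductive, trivially, because $\kk=0$). In this example the isotropy representation $\rho\colon K_\sharp\to O(2)$ is injective but has dense, non-closed image, and $\iota\colon G_\sharp\to \mathrm{Isom}(N)$ is not a closed embedding. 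This shows exactly where your argument breaks: Myers--Steenrod gives that $\mathrm{Isom}(N)$ itself acts properly with compact isotropy, but that says nothing about the subgroup $\iota(G_\sharp)$, which may fail to be closed. Moreover, the invoked ``properness of the transitive isometric action of $G_\sharp$'' is circular here --- for a transitive action of a Lie group, properness is \emph{equivalent} to compactness of the isotropy, which is what you are trying to prove.

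The fix, and the standard route in Cheeger--Ebin, is to work not with $K_\sharp$ but with the closure $C:=\overline{\Ad_\p(K_\sharp)}$ of the linear isotropy group inside the compact group $O(T_oN)$. That closure is automatically compact, and one averages over $C$ (not over $K_\sharp$) to produce the invariant complement. Your argument would become correct if you replaced the compactness claim for $K_\sharp$ by compactness of $C$ and carried the averaging over $C$; alternatively, in the paper's actual applications $K_\sharp$ is assumed compact as a separate hypothesis, and then your Haar-averaging argument goes through verbatim.
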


\begin{defi}\cite[page 334]{Knapp1988}\label{defidegkmodulo}
The vector space $V$ is a {\em $(\g,K_\sharp)$-module} if there are representations $\rho\colon\g\to\mathrm{End}(V)$ and $\alpha\colon K_\sharp\to \GL(V)$ verifying the following conditions:
\begin{enumerate}
\item 
the differentiated version of the $K_\sharp$ action is the restriction to $\kk$ of the $\g$ action, that is, $\alpha_*=\rho\mid_\kk$, or equivalently,
$$
 X\cdot v=  ( {\dd}/{\dd t} )(\exp(tX)\cdot v)_{\vert {t=0}}  ,\quad \forall X\in \kk, v\in V;
$$
\item
there is a compatibility condition
$$
(\Ad_{G_\sharp}(k)(X))\cdot v=k\cdot (X\cdot(k^{-1}\cdot v)), \quad  \forall k\in K_\sharp, X\in\g,  v\in V;
$$
\item
the vector space $V$ is $K_\sharp$-finite, that is, 
 $K_\sharp\cdot v$ generates a finite dimensional subspace of $V$,  for all $v\in V$,
\end{enumerate}
where we denote 
$$X\cdot v=\rho(X)(v), \quad 
k\cdot v=\alpha(k)(v), \quad
\mathrm{for\ } k\in K_\sharp, X\in\g, v\in V.$$
\end{defi}

\begin{ejem}The trivial module $V=\R$ is endowed with the actions $X\cdot t=0$ and $k\cdot t=t$.
\end{ejem}

\begin{ejem}If $V$ is a $(\g,K_\sharp)$-module, the dual space $V^*$ can be endowed with the following actions:
\begin{align*}
(X\varphi)(v)&=-\varphi(Xv),\\
(k\varphi)(v)&=\varphi(k\inv v),\quad \mathrm{for\ }X\in\g, k\in K_\sharp, \varphi\in V^*, v\in V.
\end{align*}
Only the subspace $(V^*)_{K_\sharp}$ of $K_\sharp$-finite elements will be a $(\g,K_\sharp)$-module.
\end{ejem}

\begin{ejem}If $V,W$ are $(\g,K_\sharp)$-modules then the tensor space $V\otimes_\R W$ is a $(\g,K_\sharp)$-module with the actions:
\begin{align*}
X(v\otimes w)&=Xv\otimes w + v\otimes Xw,\\
k(v\otimes w)&=kv\otimes kw, \quad \mathrm{for\ } X\in\g, k\in K_\sharp, v\in V, w\in W.
\end{align*}
\end{ejem}

\subsubsection{The Hazewinkel module}

\begin{defi}\cite{Hazewinkel1970}\label{HAZ} Let $V$ be a $(\g,K_\sharp)$-module. Assume that the Lie algebra $\kk$ is unimodular. The {\em Hazewinkel module} $V^{tw}$ is the space $V$ endowed with the actions:
\begin{align*}
X\odot v &= X\cdot v -\trace \ad_X v,\\
k\odot v&=\det\Ad_\p(k)\inv k\cdot v,
\end{align*}
where we denote by $\Ad_\p(k)$ the restriction of $\Ad_G(k)$, $k\in \kk$, to the vector space $\p$.
\end{defi}

\begin{remark}
The Hazewinkel module $V^{tw}$ is a $(\g,K_\sharp)$-module when the trace of $\ad_X$, for $X\in\kk$, equals that of its restriction to $\p$. This is why we need  the trace of the restriction of $\ad_X$ to $\kk$ to be zero, that is, the Lie algebra $\kk$ to be unimodular. 
\end{remark}

\begin{prop}Let $q$ be the dimension of  $\p\cong \g/\kk$. The module $V^{tw}$ is isomorphic to the module $V\otimes_\R (\Lambda^q\p)^*$.
\end{prop}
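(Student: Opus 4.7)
The plan is to first equip the one-dimensional space $(\Lambda^q\p)^*$ with the structure of a $(\g,K_\sharp)$-module, then fix a generator and show that tensoring with a fixed generator yields an equivariant isomorphism from $V^{tw}$ onto $V\otimes_\R(\Lambda^q\p)^*$.

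\textbf{Step 1: module structure on $(\Lambda^q\p)^*$.} Since $(\g,K_\sharp)$ is a reductive pair, $\Ad_{G_\sharp}(k)$ preserves $\p$ for every $k\in K_\sharp$, so $K_\sharp$ acts on $\Lambda^q\p$ through the character $\det\Ad_\p$ and on the dual by $\det\Ad_\p(k)^{-1}$. On the $\g$-side I would \emph{define} $X\cdot\omega=-\trace(\ad_X)\,\omega$ for $X\in\g$ and $\omega\in(\Lambda^q\p)^*$. Both actions are clearly linear; $K_\sharp$-finiteness is automatic because the space is one-dimensional.

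\textbf{Step 2: compatibility.} Two things must be checked. First, the differentiated $K_\sharp$-action agrees with the restriction of the $\g$-action to $\kk$: for $X\in\kk$, differentiating $k\mapsto\det\Ad_\p(k)^{-1}$ along $\exp(tX)$ gives $-\trace(\ad_X|_\p)$. Because the reductive pair condition forces $\ad_X$ to preserve both $\kk$ and $\p$ when $X\in\kk$, one has $\trace\ad_X=\trace(\ad_X|_\kk)+\trace(\ad_X|_\p)$, and the hypothesis that $\kk$ is unimodular kills the first summand. Second, the compatibility identity $(\Ad(k)X)\cdot\omega=k\cdot(X\cdot(k^{-1}\omega))$ holds because $k$ acts by a scalar on the one-dimensional space, so it reduces to $\trace\ad_{\Ad(k)X}=\trace\ad_X$, which follows from $\ad_{\Ad(k)X}=\Ad(k)\circ\ad_X\circ\Ad(k)^{-1}$.

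\textbf{Step 3: the isomorphism.} Pick any nonzero $\omega_0\in(\Lambda^q\p)^*$ and define
\[
\Phi\colon V^{tw}\to V\otimes_\R(\Lambda^q\p)^*,\qquad \Phi(v)=v\otimes\omega_0.
\]
This is manifestly a linear isomorphism. Equivariance is a direct computation using the tensor-product module structure of the previous example: for $X\in\g$,
\[
\Phi(X\odot v)=\bigl(X\cdot v-\trace(\ad_X)\,v\bigr)\otimes\omega_0=(Xv)\otimes\omega_0+v\otimes(X\omega_0)=X\cdot\Phi(v),
\]
and for $k\in K_\sharp$,
\[
\Phi(k\odot v)=\det\Ad_\p(k)^{-1}\,kv\otimes\omega_0=kv\otimes k\omega_0=k\cdot\Phi(v).
\]

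\textbf{Expected obstacle.} The only non-formal point is Step 2, i.e.\ verifying that $X\mapsto-\trace(\ad_X)$ really does endow $(\Lambda^q\p)^*$ with a $(\g,K_\sharp)$-module structure compatible with the already-given $K_\sharp$-action. This is precisely where the unimodularity hypothesis on $\kk$ is used, and it is also why the definition of the Hazewinkel module itself required that hypothesis. Once this is cleared, Step 3 is essentially a bookkeeping check that the chosen twist exactly cancels the correction terms appearing in Definition \ref{HAZ}.
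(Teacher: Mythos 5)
Your proof is correct and follows the same route the paper implicitly takes: the paper simply refers to Knapp's Lemma~7.30 for the $(\g,K_\sharp)$-module structure on $\Lambda^q\p$ (namely $K_\sharp$ acting by $\det\Ad_\p$ and $\g$ by $\trace\ad_X$), and your argument rederives exactly that structure on the dual and checks that tensoring by a fixed generator cancels the twist in Definition~\ref{HAZ}. The identification of the unimodularity hypothesis on $\kk$ as the point forcing $\trace\ad_X=\trace(\ad_X|_\p)$ for $X\in\kk$, and hence making the two actions on $(\Lambda^q\p)^*$ compatible, is precisely the content being referenced.
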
 

The precise definition of the module structure on $\Lambda^q\p$ is given in  \cite[Lemma 7.30]{Knapp1988}.

\subsubsection{Relative cohomology}

Let $V$ be a $(\g,K_\sharp)$-module. Assuming that the pair $(\g,K_\sharp)$ is reductive, the exterior algebra $\Lambda^r\p$, $0\leq r \leq q$,  inherits  a structure of $K_\sharp$-module from the adjoint action on $\p$, so we can consider the cochain complex
 $L_{K_\sharp}(\Lambda^r\p,V)$ of  $\R$-linear maps of $K_\sharp$-modules between $\Lambda^r\p$ and $V$.  

\begin{defi}
The {\em relative  cohomology groups with coefficients in  $V$},
$$H^r(\g,K_\sharp;V)$$  
are 
 the cohomology groups of the complex $L_{K_\sharp}(\Lambda^r\p,V)$.
 \end{defi} 
 The precise definition of these spaces and the differential of the complex can be found in \cite[pages 395--396]{Knapp1988}. 
 
 \begin{example}
For $r=0$, the space $L_{K_\sharp}(\Lambda^0\p,V)$ is isomorphic to the $K_\sharp$-invariant subspace $$V^{K_\sharp}=\{v\in V\colon k\cdot v=v \quad\forall k\in K_\sharp\}$$
and we define $(\delta v)(X)=X\cdot v$. 
\end{example}
 
\begin{ejem} \label{calculodehcero}
$H^0(\g,K_\sharp;V)$ equals $V^{K_\sharp,\p}$,  the space of elements of   $V$ which are invariant by the actions of $K_\sharp$ and $\p$.
\end{ejem}

 Analogously, we can consider homology.
 
\begin{defi}
The  {\em relative homology groups} $H_r(\g,K_\sharp;V)$ are the homology groups of the the chain complex  $\Lambda^r\p\otimes_{K_\sharp} V$.
 \end{defi}
The differential $\partial$ of this complex is defined in \cite[pages 394--395]{Knapp1988}. 

\begin{example}
For $r=1$ we have $
\partial(X\otimes v)=-X\cdot v.$
Also, $\Lambda^0\otimes_{K_\sharp} V=V^{K_\sharp}$, the space of $K_\sharp$-invariant vectors.
\end{example}

\subsubsection{Poincar\'e duality}

\begin{theorem}[Poincar\'e duality, {\cite[Theorem 7.31]{Knapp1988}}]
If the pair $(\g,K_\sharp)$ is reductive and $\kk$ is unimodular (in particular, if $K_\sharp$ is compact) then 
\begin{enumerate}
\item
$H^r(\g,K_\sharp;V^c)\cong H_r(\g,K_\sharp;V)^*$,
\item
$H^r(\g,K_\sharp;V)\cong H_{q-r}(\g,K_\sharp;V^{tw})$,
\end{enumerate}
for $0\leq r \leq q=\dim \p$, where $V^c=(V^*)_{K_\sharp}$ is the set of $K_\sharp$-finite elements of the dual space and   
$V^{tw}$ is the Hazewinkel module.
\end{theorem}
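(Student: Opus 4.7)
The plan is to prove both parts at the level of the defining (co)chain complexes and then pass to (co)homology.

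For (1), I would exhibit a natural non-degenerate pairing between the cochain complex computing $H^r(\g,K_\sharp;V^c)$ and the chain complex computing $H_r(\g,K_\sharp;V)$. Since $\Lambda^r\p$ is finite-dimensional, the evaluation $V^c\times V\to\R$ induces a pairing
$$L_{K_\sharp}(\Lambda^r\p,V^c)\times(\Lambda^r\p\otimes_{K_\sharp}V)\to\R,\qquad (\varphi,\omega\otimes v)\mapsto\langle\varphi(\omega),v\rangle.$$
The routine checks are: the pairing descends to the $K_\sharp$-coinvariants on the right (using that $\varphi$ is a $K_\sharp$-map and $V^c$ consists of $K_\sharp$-finite functionals), and $\delta$ and $\partial$ are adjoint under it, which reduces to direct inspection of the Chevalley--Eilenberg-type formulae in Knapp (pages 394--396). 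Non-degeneracy then follows from the fact that $V^c$ separates points of $V$.

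For (2), I would construct an explicit Hodge-star-type isomorphism
$$\star\colon L_{K_\sharp}(\Lambda^r\p,V)\;\longrightarrow\;\Lambda^{q-r}\p\otimes_{K_\sharp}V^{tw},$$
modelled on the linear identification $V^{tw}\cong V\otimes(\Lambda^q\p)^*$. In a basis $e_1,\dots,e_q$ of $\p$, $\star$ sends a cochain $\varphi$ to $\sum_I\epsilon_{IJ}\,e_J\otimes\varphi(e_I)$, where $J$ is the complement of the $r$-multi-index $I$ and $\epsilon_{IJ}$ is the sign of the shuffle. Two things need to be checked. First, $K_\sharp$-equivariance: under the diagonal adjoint action, $\star$ picks up a factor $\det\Ad_\p(k)$ from the change of $e_1\wedge\cdots\wedge e_q$, and this is exactly cancelled by the twist $k\odot v=\det\Ad_\p(k)^{-1}k\cdot v$ in $V^{tw}$. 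Second, compatibility of differentials: a term-by-term comparison of $\delta$ on cochains and $\partial$ on chains produces two kinds of discrepancy, bracket terms within $\p$ and trace terms of the form $\trace\ad_X$. The bracket terms cancel thanks to reductiveness of $(\g,K_\sharp)$, while the trace terms are absorbed by the twisted action $X\odot v=X\cdot v-\trace\ad_X\, v$; the unimodularity of $\kk$ is used to guarantee that $\trace\ad_X$ on $\g$ coincides with its trace on $\p$, which is what the twist records.

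The main obstacle is the second check, namely verifying that $\star$ intertwines $\delta$ and $\partial$ on the nose. This is the true content of the theorem and the reason the Hazewinkel module is defined with its two specific twists; the computation, while elementary, is where the combinatorics of signs, brackets and traces all have to conspire. Once done, naturality of $\star$ in $V$ makes it a chain isomorphism, and passing to (co)homology yields the duality $H^r(\g,K_\sharp;V)\cong H_{q-r}(\g,K_\sharp;V^{tw})$ for $0\leq r\leq q$, completing (2).
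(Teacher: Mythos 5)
Your proposal is correct and follows essentially the same route as the paper's sketch: for (1) the paper writes the isomorphism $F\colon (\Lambda^r\p\otimes_{K_\sharp}V)^*\to L_{K_\sharp}(\Lambda^r\p,V^c)$, which is the currying of your evaluation pairing, and for (2) the paper's map $\lambda(\alpha\otimes v)(\beta)=\epsilon_0(\alpha\wedge\beta)v$ is precisely the inverse of your Hodge-star contraction against a generator of $\Lambda^q\p$. You supply somewhat more of the verification detail (the $\det\Ad_\p(k)$ cancellation and the role of $\kk$'s unimodularity in making the $\trace\ad_X$ twist consistent), which the paper defers to Knapp, but the underlying construction is the same.
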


\begin{proof}(sketch)
Taking into account the natural isomorphism of complexes
$$F\colon (\Lambda^r\p\otimes_{K_\sharp} V)^*  \cong L_{K_\sharp}(\Lambda^r\p,V^c)$$
given by $F(a\otimes v)=F(a)(v)$,   we have (1).

 On the other hand, if $\epsilon_0$ is a generator of $\Lambda^q\p\cong\R$ we consider  the isomorphism of complexes 
$$\lambda\colon \Lambda^r\p\otimes_{K_\sharp} V^{tw}\cong L_{K_\sharp}(\Lambda^{q-r}\p, V)$$
given by $\lambda(\alpha\otimes v)(\beta)=\epsilon_0(\alpha\wedge \beta)v$ and we have (2).
\end{proof}

\begin{cor} \label{dualpoincare}
Taking $V=\R$ with the trivial $(\g,K_\sharp)$-module structure, we have
$$
H^q(\g,K_\sharp;\R)^*=H^0(\g,K_\sharp;(\R^{tw})^*),
$$
where $q=\dim N$. 
\end{cor}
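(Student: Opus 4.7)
The plan is to obtain the corollary as a direct two-step application of the Poincar\'e duality theorem stated just above, with $V=\R$ carrying the trivial $(\g,K_\sharp)$-module structure. Note first that $q=\dim N = \dim(G_\sharp/K_\sharp) = \dim\g - \dim\kk = \dim\p$, so the index $q$ appearing in the corollary agrees with the top degree in Poincar\'e duality.

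First I would apply part (2) of the theorem with $V=\R$ and $r=q$, which yields
$$H^q(\g,K_\sharp;\R) \cong H_{0}(\g,K_\sharp;\R^{tw}).$$
Next I would apply part (1) of the theorem with the module $V=\R^{tw}$ (which is still a legitimate $(\g,K_\sharp)$-module under the hypothesis that $\kk$ is unimodular, as observed in the remark following Definition~\ref{HAZ}) and $r=0$, obtaining
$$H^0(\g,K_\sharp;(\R^{tw})^c) \cong H_{0}(\g,K_\sharp;\R^{tw})^{*}.$$
Taking the linear dual of the first isomorphism and substituting into the second gives
$$H^q(\g,K_\sharp;\R)^{*} \cong H^0(\g,K_\sharp;(\R^{tw})^c).$$

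The only gap between this and the statement of the corollary is the distinction between the full dual $(\R^{tw})^{*}$ and its subspace $(\R^{tw})^c$ of $K_\sharp$-finite elements. Since $\R^{tw}$ is one-dimensional as a real vector space, so is its dual, and any single vector generates a (one-dimensional, hence) finite-dimensional $K_\sharp$-invariant subspace; therefore $(\R^{tw})^c = (\R^{tw})^{*}$, and the identification is complete. There is no real obstacle here beyond assembling these two formal steps; the corollary is essentially just a packaging of Poincar\'e duality in top degree together with the duality between cohomology and homology.
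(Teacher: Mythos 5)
Your proof is correct and takes exactly the route the paper intends: the corollary is a straightforward combination of parts (2) and (1) of the Poincar\'e duality theorem (applied first with $V=\R$ in degree $q$, then with $V=\R^{tw}$ in degree $0$), and you correctly dispose of the $(\R^{tw})^c$ versus $(\R^{tw})^*$ distinction by observing that $\R^{tw}$ is one-dimensional.
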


Finally, we need the following Lemma.
\begin{lema}\label{PREVIO}
Assume that $K_\sharp$ is unimodular. If $G_\sharp$  and $K_\sharp$ are strongly unimodular, then   $\trace \ad(X)=0$ for all $X\in \p$ and $\det \Ad_\p(k)=1$ for all $k\in K_\sharp$. The converse is true when $G_\sharp$ is connected.
\end{lema}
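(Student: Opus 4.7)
The plan is to exploit the block decomposition of $\Ad_{G_\sharp}(k)$ for $k \in K_\sharp$ with respect to the reductive splitting $\g = \kk \oplus \p$. Since $\kk$ is the Lie algebra of the subgroup $K_\sharp$, the restriction $\Ad_{G_\sharp}(k)\mid_\kk$ coincides with $\Ad_{K_\sharp}(k)$, while the reductive hypothesis $\Ad_{G_\sharp}(k)(\p) \subset \p$ shows that $\Ad_{G_\sharp}(k)$ is block diagonal on $\g = \kk \oplus \p$. This yields the multiplicative identity
$$
\det \Ad_{G_\sharp}(k) = \det \Ad_{K_\sharp}(k) \cdot \det \Ad_\p(k) \qquad (k \in K_\sharp).
$$
Differentiating at the identity along $X \in \kk$ gives the infinitesimal analogue
$$
\trace \ad_\g X = \trace \ad_\kk X + \trace (\ad X)\mid_\p \qquad (X \in \kk),
$$
since $\ad X$ preserves $\kk$ (subalgebra) and $\p$ (reductive condition $[\kk,\p]\subset\p$).

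For the forward implication, assume $G_\sharp$ and $K_\sharp$ are strongly unimodular. Substituting $\det \Ad_{G_\sharp}(k) = 1$ and $\det \Ad_{K_\sharp}(k) = 1$ into the multiplicative identity gives $\det \Ad_\p(k) = 1$ directly. For $X \in \p$, I would compute $\trace \ad_\g X$ as the derivative at $t=0$ of the constant function $t \mapsto \det \Ad_{G_\sharp}(\exp tX) = 1$ via the standard formula $d\det_I(A) = \trace A$; hence $\trace \ad X = 0$.

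For the converse, assume $G_\sharp$ is connected and that both conditions hold, together with the standing hypothesis that $K_\sharp$ is unimodular (so $\kk$ is unimodular). The hypothesis $\det \Ad_\p \equiv 1$ on $K_\sharp$, differentiated at the identity, yields $\trace (\ad X)\mid_\p = 0$ for $X \in \kk$; combining this with $\trace \ad_\kk X = 0$ via the additive identity gives $\trace \ad_\g X = 0$ for $X \in \kk$. Together with the hypothesis on $\p$, this shows that $\g$ is unimodular, so Proposition~\ref{geunimodular}(1) makes $(G_\sharp)_e = G_\sharp$ unimodular. Because $G_\sharp$ is connected, the continuous map $g \mapsto \det \Ad_{G_\sharp}(g)$ has constant sign, equals $1$ at $e$, and satisfies $|\det \Ad_{G_\sharp}|=1$, so it is identically $1$; that is, $G_\sharp$ is strongly unimodular. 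Feeding $\det \Ad_{G_\sharp}(k) = 1 = \det \Ad_\p(k)$ back into the multiplicative identity then forces $\det \Ad_{K_\sharp}(k) = 1$, so $K_\sharp$ is strongly unimodular as well.

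No step is deep; the only place one must be careful is the block decomposition, which genuinely uses both that $\kk$ is a subalgebra and the reductive condition. Once that is in place, the forward direction is pure algebra and the converse reduces to applying Proposition~\ref{geunimodular} and using connectedness to pass from unimodularity to strong unimodularity.
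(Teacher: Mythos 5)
Your proof is correct and uses the same central device as the paper: the block-diagonal decomposition of $\Ad_{G_\sharp}(k)$ with respect to $\g=\kk\oplus\p$, yielding $\det\Ad_{G_\sharp}(k)=\det\Ad_\kk(k)\cdot\det\Ad_\p(k)$, and its infinitesimal analogue. The only difference is that the paper's proof records just the forward implication (obtaining $\trace\ad X=0$ on $\p$ from unimodularity of $\g$, which is the same as your differentiation of the constant function, and then reading off $\det\Ad_\p(k)=1$ from the multiplicative identity); you additionally spell out the converse, which the paper asserts without proof. Your converse argument is sound: the reductive block structure plus unimodularity of $\kk$ gives $\trace\ad_\g X=0$ on $\kk$, hence $\g$ is unimodular, hence $G_\sharp$ is unimodular via Proposition~\ref{geunimodular}, and connectedness upgrades this to strong unimodularity (as the paper notes after Definition~\ref{deffuerteuni}); feeding back into the multiplicative identity then forces $K_\sharp$ strongly unimodular.
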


\begin{proof}
Since $G_\sharp$ is unimodular, its Lie algebra is $\g$ unimodular too, hence  $\trace\ad(X)=0$ for all $X\in \g$. 

On the other hand, the condition $\Ad_{G_\sharp}(k)(\p)\subset \p$ means that the matrix associated  to $\Ad_{G_\sharp}(k)$ has the form $\left [\begin{matrix} *&0\\ 0& *\end{matrix}\right ]$, so
\begin{equation} \label{descadj}
\det \Ad_{G_\sharp}(k)=\det \Ad_\kk(k)\cdot \det \Ad_\p(k)\quad \forall \, k\in K.
\end{equation}
But $\det \Ad_{G_\sharp}(k)=1$ and $\det \Ad_\kk(k)=1$, for all $k\in K_\sharp$, by hypothesis, and the result follows.
\end{proof}

With all that machinery we can prove the following result.

\begin{teo} \label{hcero}
 Let the pair $(\g,K_\sharp)$  be reductive. If the groups $G_\sharp$ and $K_\sharp$ are strongly unimodular then $$H^0(\g,K_\sharp;(\R^{tw})^*)\not =0.$$  
Conversely, assume that $K_\sharp$ is unimodular. If $G_\sharp$ is connected,  the condition $H^0(\g,K_\sharp;(\R^{tw})^*)\not =0$ implies that $G_\sharp$ and $K_\sharp$ are strongly unimodular.
\end{teo}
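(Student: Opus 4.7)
The plan is to unwind the cohomology group $H^0(\g,K_\sharp;(\R^{tw})^*)$ until it becomes a purely algebraic condition on $\g$, $\kk$ and $K_\sharp$, and then invoke Lemma~\ref{PREVIO} to translate that condition into strong unimodularity.

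First, by Example~\ref{calculodehcero}, for any $(\g,K_\sharp)$-module $V$ one has
\[
H^0(\g,K_\sharp;V)=V^{K_\sharp,\p}=\{v\in V : k\cdot v = v \ \forall k\in K_\sharp,\ X\cdot v=0\ \forall X\in \p\}.
\]
I would apply this with $V=(\R^{tw})^*$ and compute the actions explicitly. Starting from Definition~\ref{HAZ}, the $\g$ and $K_\sharp$ actions on $\R^{tw}$ (viewed as $\R$) are $X\odot t=-(\trace \ad_X)\,t$ and $k\odot t=(\det \Ad_\p(k))^{-1}\,t$. Passing to the dual using the standard formulas, an element $\varphi\in(\R^{tw})^*\cong\R$ is acted on by
\[
(X\cdot\varphi)(t)= -\varphi(X\odot t)=(\trace\ad_X)\,\varphi(t),\qquad
(k\cdot\varphi)(t)=\varphi(k^{-1}\odot t)=\det\Ad_\p(k)\,\varphi(t).
\]
Since $(\R^{tw})^*$ is one-dimensional, the existence of a nonzero invariant $\varphi$ is equivalent to the two scalar conditions
\[
\trace \ad_X = 0 \quad \forall X\in \p,\qquad \det \Ad_\p(k)=1 \quad \forall k\in K_\sharp.
\]

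With this reformulation the theorem becomes immediate from Lemma~\ref{PREVIO}. For the direct implication: if $G_\sharp$ and $K_\sharp$ are strongly unimodular then $K_\sharp$ is in particular unimodular, so Lemma~\ref{PREVIO} gives both scalar conditions above; hence every $\varphi\in(\R^{tw})^*$ is invariant and $H^0(\g,K_\sharp;(\R^{tw})^*)\neq 0$. For the converse, assuming $K_\sharp$ unimodular and $G_\sharp$ connected, the nonvanishing of $H^0$ forces the same two scalar conditions, and the second half of Lemma~\ref{PREVIO} then yields that $G_\sharp$ and $K_\sharp$ are strongly unimodular.

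The only delicate step is the computation of the dual $(\g,K_\sharp)$-module structure on $(\R^{tw})^*$, where one must keep careful track of the sign and of the inverse in the $K_\sharp$ action so that the compatibility (2) in Definition~\ref{defidegkmodulo} is respected; everything else is a direct application of the preparatory Example~\ref{calculodehcero} and Lemma~\ref{PREVIO}.
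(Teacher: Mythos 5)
Your proposal is correct and follows essentially the same route as the paper: unwind $H^0(\g,K_\sharp;(\R^{tw})^*)$ via Example~\ref{calculodehcero}, compute the dual Hazewinkel actions on a one-dimensional space to reduce nonvanishing to the two scalar conditions $\trace\ad_X=0$ on $\p$ and $\det\Ad_\p(k)=1$ on $K_\sharp$, and then invoke Lemma~\ref{PREVIO} in both directions. Your explicit bookkeeping of the sign and inverse in the dual action is if anything cleaner than the paper's phrasing, but the argument is the same.
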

\begin{proof}
Accordingly to Example \ref{calculodehcero}, the elements of $H^0(\g,K_\sharp;(\R^{tw})^*)$ will be those $\varphi\in (\R^{tw})^*$ which are invariant by the action of  $K_\sharp$ and by the action of $\p$. Let us see what that means:
\begin{enumerate}
\item
We consider on $\p$ the structure dual to that of Hazewinkel. Then
 $$
\varphi(X\cdot v)=\trace \ad(X)\varphi(v),\quad \forall\, X\in \p,\, v\in \R.
$$
But the action of $\p$ on $\R$ is trivial,  so $\varphi(X\cdot v)=\varphi(0)=0$. Hence, an element   $\varphi\not =0$ is invariant if and only if $\trace \ad(X)=0$ for all $X\in \p$, so all the elements of $(\R^{tw})^*$ are invariant for the action of $\p$.
\item
On the other hand, that $\varphi$ is invariant by the action of  $K_\sharp$ means that 
$$k\cdot_t \varphi=\ \det \Ad_\p(k)\varphi(k^{-1}\cdot v),$$
for all $k\in K_\sharp$.
But the action of $K_\sharp$ on $\R$ being trivial, we have $\varphi(k^{-1}\cdot v)=\varphi(v)$, so $\varphi\not =0$ is invariant if and only if  $\det\Ad_\p(k)=1$ for all $k\in K_\sharp$. Again, all the elements of $(\R^{tw})^*$ will be invariant by the action of $K_\sharp$.
\end{enumerate}
Summarizing, either $H^0(\g,K;(\R^{tw})^*)=0$ or $H^0(\g,K;(\R^{tw})^*)=\R^*\cong \R$, and this can happen if and only if $\trace \ad(X)=0$ for all $X\in \p$ and $\det \Ad_\p(k)=1$ for all $k\in K_\sharp$. The result then follows from Lemma \ref{PREVIO}.
\end{proof}

\subsection{De Rham cohomology}
Let $N=G_\sharp/K_\sharp$ be a connected homogeneous space and let $\Gamma\subset G_\sharp$ be a subgroup. We shall denote by $H_\Gamma(N)$ the cohomology of the De Rham complex $\Omega^\bullet_\Gamma(N)$ of differential forms on $N$ which are $\Gamma$-invariant. If  $\overline \Gamma$ is the closure of $\Gamma$ in $G_\sharp$ then $H_{\Gamma}(N)= H_{\overline\Gamma}(N)$.

Our main result in this section is the following one.

\begin{teo} \label{morfismo}Let $i^*\colon H_{G_\sharp}(N)\to H_\Gamma({N})
$
be the morphism induced in cohomology by the inclusion  $ \Omega_{G_\sharp}(N)\subset  \Omega_\Gamma(N)$.
If the manifold $W=\overline\Gamma\backslash G_\sharp$ is compact and  there exists a volume form on $W$ which is right invariant by the action of $G_\sharp$, then $i^*$ is injective.
\end{teo}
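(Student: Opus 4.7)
The strategy is to construct a cochain-level retraction of the inclusion $i$: an operator $p\colon \Omega_\Gamma(N)\to\Omega_{G_\sharp}(N)$ which is a chain map and satisfies $p\circ i=\mathrm{id}$. Passing to cohomology will then yield $p^*\circ i^*=\mathrm{id}$, from which injectivity of $i^*$ is immediate.

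For $\omega\in\Omega^r_\Gamma(N)$, $x\in N$ and $v_1,\dots,v_r\in T_xN$, I would define $p(\omega)$ by the averaging formula
$$p(\omega)_x(v_1,\dots,v_r)=\frac{1}{c}\int_W (L_g^*\omega)_x(v_1,\dots,v_r)\,d\mu_W([g]),$$
where $\mu_W$ is the prescribed right $G_\sharp$-invariant volume form on $W=\overline\Gamma\backslash G_\sharp$ and $c=\int_W d\mu_W$ is its total volume (positive and finite, by compactness of $W$). The integrand, regarded as a function of $g\in G_\sharp$, descends to $W$ because $\omega$ is $\overline\Gamma$-invariant: for any $\gamma\in\overline\Gamma$ one has $L_{\gamma g}^*\omega=L_g^*L_\gamma^*\omega=L_g^*\omega$. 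Compactness of $W$ then ensures the integral converges.

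Three properties need to be checked. First, $p(\omega)$ is $G_\sharp$-invariant: for $h\in G_\sharp$, using $(L_h^*L_g^*\omega)_x=(L_{gh}^*\omega)_x$ one rewrites
$$L_h^*p(\omega)_x(v_1,\dots,v_r)=\frac{1}{c}\int_W(L_{gh}^*\omega)_x(v_1,\dots,v_r)\,d\mu_W([g]),$$
and the change of variable $[g]\mapsto [g]h^{-1}$ (a diffeomorphism of $W$) recovers $p(\omega)_x$ precisely because $\mu_W$ is right $G_\sharp$-invariant; this is where the crucial hypothesis is used. Second, $p$ commutes with the exterior differential, since $d$ commutes with each pullback $L_g^*$ and with integration over the compact parameter space. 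Third, if $\alpha\in\Omega_{G_\sharp}(N)$, the integrand is constant equal to $\alpha_x(v_1,\dots,v_r)$, so the normalization by $c$ gives $p(\alpha)=\alpha$.

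The main technical obstacle I anticipate is a regularity check: verifying that $p(\omega)$ is genuinely a smooth differential form on $N$, i.e.\ that the integral depends smoothly on the base point and the tangent vectors. This follows from smoothness of the action $G_\sharp\times N\to N$, compactness of $W$, and differentiation under the integral sign carried out in local coordinates, but it is the only step that is not a direct manipulation of the definitions. Once this regularity and the three properties above are in place, $p$ is a chain retraction of $i$, and the theorem follows.
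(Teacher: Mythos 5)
Your proposal is correct and follows essentially the same route as the paper: both construct the retraction by averaging the pulled-back forms $\lambda(g)^*\omega$ over $W=\overline\Gamma\backslash G_\sharp$ against the right-invariant volume form (normalized to total mass $1$), use $\overline\Gamma$-invariance of $\omega$ to see that the integrand descends to $W$, right-invariance of $\mu_W$ for $G_\sharp$-invariance of the result, and differentiation under the integral sign (over the compact $W$) to show the map is a chain map. The smoothness check you flag is indeed the one nontrivial point; the paper handles it via the same ``derivation under the integral sign'' statement.
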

\begin{proof}
It is enough to define a morphism of complexes  $r\colon \Omega_{\overline\Gamma}(N) \to \Omega_{G_\sharp}(N)$ such that $r\circ i = \id$.   
Consider the map $\lambda\colon G \to \Diff(N)$ given by $\lambda(g)(p)=g\cdot p$ and define, for each ${\overline\Gamma}$-invariant differential  form $\alpha$ of degree $s$ on $N$, that is,  $\alpha\in \Omega^s_{\overline\Gamma}(N)$, the following map: 
$$\phi_\alpha\colon x=[g]\in W=\overline\Gamma\backslash G_\sharp \mapsto x^*\alpha=\lambda(g)^*\alpha \in \Omega^s(N).$$
It is well-defined because, if $h\in {\overline\Gamma}$ then
$$\lambda(hg)^*\alpha =(\lambda(h)\circ \lambda(g))^*\alpha=\lambda(g)^*\lambda(h)^*\alpha=\lambda(g)^*\alpha.$$

 We denote by 
$$r(\alpha)=\int_W{(x^*\alpha)\, \omega(x)}$$ the following $r$-form on $N$:
$$r(\alpha)_{[g]}(X_1([g]),\dots,X_s([g]))=\int_W{(x^*\alpha)_{[g]}(X_1([g]),\dots,X_s([g]))\, \omega(x)},$$
where $\omega$ is the invariant volume form, that we can assume that verifies
$$\int_W\omega(x)=1.$$
 
Now it is routine to check the following two properties:
\begin{enumerate}
\item
$r(\alpha)$ is $G_\sharp$-invariant
\item
If $\alpha$ is $G_\sharp$-invariant then $r(\alpha)=\alpha$.
\end{enumerate}
Finally, $r$ is a morphism of complexes by the property
\begin{enumerate}\item[(3)]
$r(d\alpha)=dr(\alpha),$
\end{enumerate}
which can be proved taken into account the following result:

\begin{teo}[Derivation under the integral sign]  \label{dbsiv}
Let $W$ and $N$ be two smooth manifolds. Assume that $W$ is compact and orientable. Then, for each smooth function $g\colon W\times N\to {\mathbb R}$ and each smooth vector field $X$ on $N$, we have
$$
\int_W X  g(x,p)  \cdot\omega (x)=X  \int_W g(x,p)\cdot \omega(x).
$$
where the derivation $X$ is relative to the variable $p$.
\end{teo}
\end{proof}

As a Corollary we shall obtain  Theorem \ref{MAININTRO} about the non-nullity of the top cohomology group, that we stated in   the Introduction.
 
\begin{proof}[Proof of Theorem \ref{MAININTRO}]
 
First, assume that $\dim \overline\Gamma>0$. 

Since $G_\sharp$ is strongly unimodular we have $\det \Ad_{G_\sharp}(\gamma)=1$ for all $\gamma\in \overline\Gamma$.  However,   $\overline\Gamma$ may not be connected, so it may happen that $\det \Ad _{\overline\Gamma}(\gamma)=-1$ for some $\gamma$.

If $\det \Ad _{\overline\Gamma}(\gamma)=1=\det \Ad _{G_\sharp}(\gamma)$ for all $\gamma\in \overline\Gamma$, we know from \cite[Proposition 1.6.]{Helgason1962}  that there exists on $W=\overline\Gamma\backslash G_\sharp$ an invariant volume form, which implies, by Theorem \ref{morfismo} that the morphism $H_{G_\sharp}(N) \to H_{\overline\Gamma}(N)$ is injective. 

On the other hand, if $\det \Ad _{\overline\Gamma}(\gamma)=-1$ for some $\gamma$, we can consider, as we did in \cite{Macias2005},  the subgroup $H_2=\{ \gamma\in \overline\Gamma\colon \det \Ad _{\overline\Gamma}(\gamma)>0\}$ and the manifold $W_2=H_2\backslash G_\sharp$. In this way, $W_2$ is compact and  $\det \Ad _{H_2}(h)=1=\det \Ad _{G_\sharp}(h)$ for all $h\in H_2$. Hence, the morphism $H_{G_\sharp}(N) \to H_{H_2}(N)$ is injective, by Theorem \ref{morfismo}. Now, we can consider the composition  
$$
\Omega_{G_\sharp}(N)\to \Omega_{\overline\Gamma}(N)\to \Omega_{H_2}(N)
$$
and the induced morphism $H_{G_\sharp}(N) \to H_{\overline\Gamma}(N)$ will be injective too.

In both cases, taking into account that $H^q_{G_\sharp}(N)\neq 0$, we have $H^q_{\overline\Gamma}(G_\sharp)\not =0$, as stated.

When  $\overline\Gamma$ is a discrete group, we can argue in the following way:  since $G_\sharp$ is unimodular, it admits a bi-invariant volume form $\omega$. Since $G_\sharp \to W=\overline\Gamma\backslash G_\sharp$ is a covering,  $\omega$ induces a form $\overline\omega$ on $W$ which is $G_\sharp$-invariant. Finally, since $W$ is compact, Stokes theorem implies that $\overline\omega$ is a volume form.
\end{proof}

We now recall how to compute  the cohomology of the complex of invariant forms on the homogeneous space $N=G_\sharp/K_\sharp$. If $o=[e]\in N$  we denote $\p=\g/\kk=T_oN$. and by  $\Ad_\p(k)$, with $k\in K_\sharp$, denotes the linear endomorphism of $\p=\g/\kk$ induced by   $\Ad_G(k)\colon \g\to\g$, which is well defined because $\kk$ is a Lie subalgebra.

\begin{prop}\label{CALCULOEXPLICIT}\  
\begin{enumerate}
\item
The complex $\Omega_{G_\sharp}(N)$ of $G_\sharp$-invariant forms is isomorphic to the complex   $(\Lambda^r \p)^*_{K_\sharp}$ of alternate multilinear forms  $\p^r\to \R$ which are  $\Ad_\p(K_\sharp)$-invariant \cite[p. 458]{Greub1976};
\item
If the pair $(\g,K_\sharp)$ is reductive, then  $(\Lambda^r \p)^*_{K_\sharp}$  is isomorphic to the complex $L_{K_\sharp}(\Lambda^r\p,\R)$.
\end{enumerate}
\end{prop}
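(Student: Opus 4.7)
The plan is to prove both isomorphisms pointwise and then verify that the differentials match.

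For part (1), I would introduce the evaluation-at-the-origin map
$$\varepsilon\colon \Omega^r_{G_\sharp}(N)\to (\Lambda^r\p)^*, \qquad \varepsilon(\alpha)=\alpha_o,$$
using the identification $T_oN\cong \g/\kk\cong \p$. Since $K_\sharp$ fixes the base point $o$ and acts on $T_oN$ via the isotropy representation, which under the identification $T_oN\cong\p$ is exactly $\Ad_\p$, the $G_\sharp$-invariance of $\alpha$ forces $\varepsilon(\alpha)$ to be $\Ad_\p(K_\sharp)$-invariant, so $\varepsilon(\alpha)\in(\Lambda^r\p)^*_{K_\sharp}$. Injectivity of $\varepsilon$ follows from transitivity: if $\alpha_o=0$ and $\alpha$ is $G_\sharp$-invariant, then $\alpha_{g\cdot o}=\lambda(g^{-1})^*(\alpha_o)=0$ everywhere. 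For surjectivity, given $\omega_0\in(\Lambda^r\p)^*_{K_\sharp}$, I would define $\alpha_{g\cdot o}:=\lambda(g^{-1})^*\omega_0$; well-definedness (independence of the representative $g$) is precisely the $\Ad_\p(K_\sharp)$-invariance of $\omega_0$, and smoothness follows by taking local sections of the principal bundle $G_\sharp\to N$.

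For part (2), I would simply unravel the definitions: an element of $L_{K_\sharp}(\Lambda^r\p,\R)$ is an $\R$-linear map of $K_\sharp$-modules from $\Lambda^r\p$ (with the $\Ad_\p$-induced action) to $\R$ with the trivial action. The reductive condition $\Ad_{G_\sharp}(k)(\p)\subset\p$ is what makes $\p$ (rather than just the abstract quotient $\g/\kk$) a genuine $K_\sharp$-module under $\Ad_\p$. Unwinding \emph{$K_\sharp$-equivariance with trivial target} yields exactly $K_\sharp$-invariance of the corresponding element of $(\Lambda^r\p)^*$. So the two vector spaces coincide tautologically once the reductive splitting is fixed.

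The hard part is showing these bijections are isomorphisms \emph{of cochain complexes}, i.e.\ that the de Rham differential corresponds to the differential of the relative Lie algebra complex $L_{K_\sharp}(\Lambda^\bullet\p,\R)$ described in Knapp. The plan is to evaluate $d\alpha$ at the origin using the Cartan formula
$$(d\alpha)(X_0^*,\dots,X_r^*)=\sum_{i}(-1)^i X_i^*\alpha(X_0^*,\dots,\widehat{X_i^*},\dots,X_r^*)+\sum_{i<j}(-1)^{i+j}\alpha([X_i^*,X_j^*],X_0^*,\dots),$$
applied to the fundamental vector fields $X^*$ generated by $X\in\g$ on $N$. At the point $o$, the first sum vanishes on the $\p$-components because $G_\sharp$-invariant forms have constant value along the orbit directions of the $\p$-fundamental fields, while the $\kk$-components are absorbed through the isotropy action; and the bracket $[X^*,Y^*]$ corresponds (with a sign) to $[X,Y]$ in $\g$, whose $\kk$-part contributes the $K_\sharp$-action terms and whose $\p$-part contributes the expected wedge term. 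Matching this with the standard formula for the relative Lie algebra differential from \cite[pp.~395--396]{Knapp1988} completes the proof. This Cartan-formula computation is the genuinely technical step; the rest is bookkeeping of the identifications $T_oN\cong\p$ and of the $K_\sharp$-module structures.
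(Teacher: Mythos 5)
The paper does not actually prove Proposition~\ref{CALCULOEXPLICIT}: part~(1) is dispatched by a citation to Greub's book, and part~(2) is left as a definitional identification. So there is no ``paper proof'' to compare against; the right question is whether your argument is correct and in the spirit of the cited reference, and it largely is. Your evaluation-at-the-origin map $\varepsilon(\alpha)=\alpha_o$, the identification of the isotropy representation on $T_oN$ with $\Ad_\p$, and the argument for injectivity and surjectivity of $\varepsilon$ are all correct and are exactly what one finds in the standard references. Your part~(2) is also correct, though slightly misphrased: the quotient $\g/\kk$ is always a $K_\sharp$-module under the induced adjoint action (as the paper itself notes just before the proposition); what reductivity buys is an $\Ad(K_\sharp)$-invariant \emph{complement} $\p\subset\g$, which is what Knapp's complex $L_{K_\sharp}(\Lambda^r\p,\R)$ actually uses.

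The one place where the sketch is too quick is the Cartan-formula step. The functions $p\mapsto\alpha_p(X_1^*\vert_p,\dots,X_r^*\vert_p)$ are \emph{not} constant for a $G_\sharp$-invariant form, because the fundamental vector fields $X^*$ are not themselves $G_\sharp$-invariant (they transform by $\Ad$). The correct invariance statement is $\mathcal L_{X^*}\alpha=0$; expanding this gives
$$X_0^*\cdot\bigl(\alpha(X_1^*,\dots,X_r^*)\bigr)=\sum_i\alpha\bigl(X_1^*,\dots,[X_0^*,X_i^*],\dots,X_r^*\bigr),$$
so the ``first sum'' in Cartan's formula does not simply vanish; it contributes further bracket terms, which then combine with the second sum and with the sign conventions $[X^*,Y^*]=-[X,Y]^*$ to produce, at $o$, precisely the Chevalley--Eilenberg differential on $L_{K_\sharp}(\Lambda^\bullet\p,\R)$ (for trivial coefficients only the $\p$-projected bracket term survives). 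You flagged this as the ``genuinely technical step,'' which is fair; but as written the vanishing claim is not correct, and a careful reader would want this recomputed. An alternative and perhaps cleaner route, closer to Greub's, is to pull $\alpha$ back to a left-invariant, horizontal, right-$K_\sharp$-basic form on $G_\sharp$ and use the standard Maurer--Cartan computation there, avoiding fundamental fields on $N$ altogether.
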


\begin{cor}
Let the pair $(\g,K_\sharp)$  be reductive. 
Then   $H_{G_\sharp}(N)$ is isomorphic to  $H(\g,K_\sharp;\R)$.
\end{cor}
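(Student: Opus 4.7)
The plan is simply to chain the two isomorphisms of complexes provided by Proposition \ref{CALCULOEXPLICIT} and observe that the resulting identification intertwines the de Rham differential with the Chevalley--Eilenberg--type differential defining relative Lie algebra cohomology. Since the corollary is stated immediately after the proposition, no genuinely new input is needed beyond checking this compatibility on cochains.

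More concretely, I would first use part (1) of Proposition \ref{CALCULOEXPLICIT} to identify $\Omega^r_{G_\sharp}(N)$ with the space $(\Lambda^r\p)^*_{K_\sharp}$ of $\Ad_\p(K_\sharp)$-invariant alternating $r$-forms on $\p = \g/\kk = T_oN$. The map sends a $G_\sharp$-invariant form $\alpha$ on $N$ to its value $\alpha_o$ at the base point $o=[e]$, and its inverse reconstructs $\alpha$ from $\alpha_o$ by left translation. Next, under the reductivity hypothesis, part (2) of the same proposition identifies $(\Lambda^r\p)^*_{K_\sharp}$ with $L_{K_\sharp}(\Lambda^r\p,\R)$, essentially because the $K_\sharp$-module $\R$ is trivial so that $K_\sharp$-equivariance of a linear map $\Lambda^r\p \to \R$ coincides with $\Ad_\p(K_\sharp)$-invariance of the corresponding element of $(\Lambda^r\p)^*$.

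By the very definition of $H(\g,K_\sharp;\R)$ as the cohomology of the complex $L_{K_\sharp}(\Lambda^\bullet\p,\R)$ with the standard differential (the one recalled from \cite[pages 395--396]{Knapp1988}), all that remains is to verify that the composite bijection
\[
\Omega^r_{G_\sharp}(N) \;\longrightarrow\; (\Lambda^r\p)^*_{K_\sharp} \;\longrightarrow\; L_{K_\sharp}(\Lambda^r\p,\R)
\]
commutes with the respective differentials. This is a standard computation: evaluating $d\alpha$ at $o$ on a tuple of vectors coming from elements of $\p$ and using the formula for $d$ in terms of Lie brackets and directional derivatives, one recovers exactly the Chevalley--Eilenberg formula, with the $\kk$-directions killed by the $K_\sharp$-invariance of $\alpha$. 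The reductive decomposition $\g=\kk\oplus\p$ is precisely what is required to rewrite brackets of vectors in $\p$ modulo $\kk$ and stay inside $L_{K_\sharp}(\Lambda^r\p,\R)$.

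The only mildly delicate step is this last differential check; beyond that, the corollary is a formal consequence of the two isomorphisms already listed in Proposition \ref{CALCULOEXPLICIT}. I would present the argument in one short paragraph invoking that proposition and then a one-line remark (or reference to \cite[Chapter VII.9]{Knapp1988}) that the de Rham differential on invariant forms corresponds under evaluation at $o$ to the Chevalley--Eilenberg differential on $L_{K_\sharp}(\Lambda^\bullet\p,\R)$, so that the cohomologies agree.
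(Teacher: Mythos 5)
Your argument is correct and follows exactly the route the paper intends: the corollary is a direct consequence of chaining the two complex isomorphisms of Proposition \ref{CALCULOEXPLICIT} and passing to cohomology. One small observation: since the proposition already asserts isomorphisms \emph{of complexes} (not merely of graded vector spaces), the compatibility of the differentials that you flag as ``the only mildly delicate step'' is in fact already contained in the statement of the proposition, so the corollary requires no further verification.
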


Then, from Corollary \ref{dualpoincare}, Theorem \ref{hcero} and Proposition \ref{CALCULOEXPLICIT}, we are able to prove the following result. 

\begin{prop} \label{equivuni}
 If the pair $(\g,K_\sharp)$  is reductive and the groups $G_\sharp$ and $K_\sharp$ are strongly unimodular, then  $H^q_{G_\sharp}(N)\not =0$, where $q=\dim N$. In fact, $H^q_{G_\sharp}(N)=\R$. Conversely, when $G_\sharp$ is connected, the condition $H^q_{G_\sharp}(N)\not =0$ implies that $G_\sharp$ and $K_\sharp$ are strongly unimodular.
 \end{prop}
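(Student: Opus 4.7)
The plan is to assemble three results from earlier in the section into a single chain of identifications that expresses the top invariant cohomology $H^q_{G_\sharp}(N)$ in terms of the Lie algebra data $H^0(\g,K_\sharp;(\R^{tw})^*)$, for which strong unimodularity has already been characterized.

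First, since the pair $(\g,K_\sharp)$ is reductive by hypothesis, the corollary following Proposition \ref{CALCULOEXPLICIT} gives a degree-preserving isomorphism $H_{G_\sharp}(N)\cong H(\g,K_\sharp;\R)$. In particular, at the top degree $q=\dim N=\dim\p$, we obtain $H^q_{G_\sharp}(N)\cong H^q(\g,K_\sharp;\R)$. Next, the Poincar\'e-duality statement in Corollary \ref{dualpoincare} identifies the dual of this space with $H^0(\g,K_\sharp;(\R^{tw})^*)$. Hence non-vanishing of $H^q_{G_\sharp}(N)$ is equivalent to non-vanishing of $H^0(\g,K_\sharp;(\R^{tw})^*)$, and moreover an isomorphism $H^q_{G_\sharp}(N)\cong\R$ will hold as soon as the $H^0$ group is one-dimensional.

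Finally, I would invoke Theorem \ref{hcero}. Inspecting its proof, one sees that $H^0(\g,K_\sharp;(\R^{tw})^*)$ is \emph{a priori} either $0$ or $\R$ (the latter precisely when every $\varphi\in(\R^{tw})^*$ is invariant, which happens iff $\trace\ad(X)=0$ for all $X\in\p$ and $\det\Ad_\p(k)=1$ for all $k\in K_\sharp$). For the forward direction, the strong unimodularity of $G_\sharp$ and $K_\sharp$ gives $H^0(\g,K_\sharp;(\R^{tw})^*)\neq 0$ by Theorem \ref{hcero}; combined with the dichotomy just mentioned and the duality chain above, this yields $H^q_{G_\sharp}(N)\cong\R$. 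For the converse, assuming $G_\sharp$ is connected, non-vanishing of $H^q_{G_\sharp}(N)$ transports through the isomorphisms to non-vanishing of $H^0(\g,K_\sharp;(\R^{tw})^*)$, and the converse in Theorem \ref{hcero} returns the strong unimodularity of $G_\sharp$ and $K_\sharp$.

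There is no serious obstacle, since all the heavy lifting is done by the preceding results; the only point that needs care is the hypothesis $K_\sharp$ unimodular required by the converse in Theorem \ref{hcero}. In the intended application $K_\sharp$ is compact, hence automatically unimodular, so this is harmless; but for a clean self-contained statement one should either add this hypothesis to the converse part of Proposition \ref{equivuni} or note that strong unimodularity of $K_\sharp$ (which is what is being concluded) trivially implies unimodularity, so the hypothesis can be absorbed into the conclusion.
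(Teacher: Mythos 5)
Your proof is correct and follows exactly the paper's intended argument: the paper proves Proposition~\ref{equivuni} by composing the identification $H_{G_\sharp}(N)\cong H(\g,K_\sharp;\R)$ from the corollary to Proposition~\ref{CALCULOEXPLICIT}, the Poincar\'e duality in Corollary~\ref{dualpoincare}, and the dichotomy established in the proof of Theorem~\ref{hcero}. Your observation that the converse implicitly requires $K_\sharp$ unimodular (so that $\R^{tw}$ is defined and Poincar\'e duality is available) is a fair catch that the paper also elides; however, the suggestion of absorbing that hypothesis into the conclusion does not work, since unimodularity of $\kk$ is needed to set up the duality before one can conclude anything, so it would be circular --- one should simply add it as a hypothesis (harmless in practice since $K_\sharp$ compact is always assumed in the applications).
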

 
\subsection{Unimodular foliations}
We apply the results of the last paragraph to the transversely homogeneous foliation $\F$ on the manifold $M$.

\begin{defi}[\cite{Reinhart1959}] The differential form $\alpha$ on $M$ is {\em base-like} for the foliation $\F$ if 
it is invariant and horizontal, that is,  $i_X\alpha=0$ and $i_X\dd \alpha=0$ for any vector field $X$ tangent to the foliation. 
\end{defi}
We shall denote by $(\Omega^\bullet(M),\dd)$ the De Rham complex of differential forms on $M$, and by $\Omega^\bullet(M/\F)$ the subcomplex of base-like forms. The {\em base-like} or {\em basic} cohomology of the foliation $\F$  is the cohomology $H(M/\F)$ of this subcomplex.

\begin{defi}The foliation $\F$ is {\em unimodular} if $H^q(M/\F)\neq 0$, for $q=\dim N=\codim \F$.
\end{defi}

The following result is a direct consequence of the structure theorem \ref{desth}.
We shall need one previous Lemma:

\begin{lema} \label{sfc}
Let $f\colon \widetilde M\to N$ be a submersion with connected fibers and let   $\widetilde \F=f^* \pt$ be the simple foliation defined by $f$.   Then $H(\widetilde M/\widetilde \F)\cong H(N)$.
\end{lema}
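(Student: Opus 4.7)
The approach is to upgrade the statement to an isomorphism of cochain complexes $f^{\ast}\colon \Omega^{\bullet}(N)\stackrel{\cong}{\longrightarrow}\Omega^{\bullet}(\widetilde M/\widetilde \F)$, from which the cohomological isomorphism follows at once. That $f^{\ast}$ takes values in the basic subcomplex and commutes with $\dd$ is immediate, since the leaves of $\widetilde \F=f^{\ast}\pt$ are precisely the (connected) fibers of $f$ and hence tangent to $\ker \dd f$. Injectivity is clear as well: $f$ being a (surjective) submersion, each $\dd f_{\widetilde x}$ is onto $T_{f(\widetilde x)}N$, so $f^{\ast}\beta=0$ forces $\beta=0$.

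The substance of the proof is to build an inverse. Given $\alpha\in\Omega^{s}(\widetilde M/\widetilde \F)$, I would define $\beta\in\Omega^{s}(N)$ pointwise by
$$\beta_p(u_1,\dots,u_s)=\alpha_{\widetilde x}(v_1,\dots,v_s),\qquad \widetilde x\in f^{-1}(p),\ \dd f_{\widetilde x}(v_i)=u_i.$$
Horizontality of $\alpha$, i.e.\ $i_Y\alpha=0$ for every $Y$ tangent to $\widetilde \F=\ker \dd f$, guarantees that any two lifts of the same $u_i$ differ by an element on which $\alpha_{\widetilde x}$ vanishes, so the right-hand side depends only on the $u_i$'s and on the choice of $\widetilde x$. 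Once well-definedness of $\beta$ is established, its smoothness is a local matter: any smooth local section $s\colon V\subset N\to\widetilde M$ of the submersion $f$ witnesses $\beta|_V=s^{\ast}\alpha$. The identities $f^{\ast}\beta=\alpha$ and $f^{\ast}\dd\beta=\dd\alpha$ then follow directly from the construction.

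The main obstacle, and the only place where the connected-fiber hypothesis is used, is to show that $\beta_p(u_1,\dots,u_s)$ does not depend on the choice of $\widetilde x\in f^{-1}(p)$. My plan is to exploit the invariance side of being basic. Given $\widetilde x_0,\widetilde x_1\in f^{-1}(p)$, connectedness (plus local path-connectedness of the fiber, which is a submanifold) yields a smooth path from $\widetilde x_0$ to $\widetilde x_1$ inside $f^{-1}(p)$; this can be realized as an integral curve of a compactly supported smooth vector field $X$ tangent to $\widetilde \F$, whose time-one flow $\Phi$ thus satisfies $\Phi(\widetilde x_0)=\widetilde x_1$. The basic condition combined with Cartan's formula gives $\mathcal L_X\alpha=\dd i_X\alpha+i_X\dd\alpha=0$, hence $\Phi^{\ast}\alpha=\alpha$. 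Moreover $f\circ\Phi=f$, so $\Phi_{\ast}$ carries any choice of lifts $v_1,\dots,v_s$ of $(u_1,\dots,u_s)$ at $\widetilde x_0$ to lifts at $\widetilde x_1$. Evaluating $\alpha_{\widetilde x_1}(\Phi_{\ast}v_1,\dots,\Phi_{\ast}v_s)=\alpha_{\widetilde x_0}(v_1,\dots,v_s)$ yields the required equality, so $\beta$ is well defined and the proof concludes.
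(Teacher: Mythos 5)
The paper states Lemma~\ref{sfc} without proof, treating it as a standard fact about simple foliations; your argument is the expected one and is essentially correct. You actually prove the stronger statement that $f^{\ast}\colon \Omega^{\bullet}(N)\to\Omega^{\bullet}(\widetilde M/\widetilde\F)$ is already an isomorphism of cochain complexes, which is precisely what the paper later uses in the proof of Theorem~\ref{cohth} (``it is enough to check that $f^{\ast}$ is an isomorphism'').

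Two points to tidy up. First, the lemma as written has no surjectivity hypothesis on $f$, yet your argument (rightly) uses it twice: injectivity of $f^{\ast}$ fails without it (a form on $N$ supported off $f(\widetilde M)$ pulls back to zero, and indeed the lemma is simply false for an open embedding), and the $\beta$ you construct is only defined on $f(\widetilde M)$. Since the paper invokes the lemma only with a surjective developing map, you should add surjectivity to your hypotheses, or read ``connected fibers'' as excluding empty ones. Second, realizing a path from $\widetilde x_0$ to $\widetilde x_1$ in a fiber as the time-one flow of a \emph{single} compactly supported vector field tangent to $\widetilde\F$ requires the path to be embedded and nowhere stationary so that its velocity field extends off the curve; this can always be arranged, but it is cleaner and more robust to subdivide the path into finitely many arcs, each lying in a foliated chart, and apply the Cartan-formula argument $\mathcal L_X\alpha=\dd i_X\alpha + i_X\dd\alpha=0$, hence $\Phi^{\ast}\alpha=\alpha$, to one short-time flow per arc. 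Everything else --- horizontality giving independence of the choice of lifts, smoothness of $\beta$ via a local section $s$ with $\beta\vert_V=s^{\ast}\alpha$, and the identities $f^{\ast}\beta=\alpha$, $f^{\ast}\dd\beta=\dd\alpha$ --- is sound.
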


The following Theorem was first proved by Blumenthal in \cite{Blumenthal1980} under some more restrictive hypothesis.

\begin{teo} \label{cohth} Let $\F$ be a $N$-transversely homogeneous foliation on the manifold $M$, 
with $N$ connected. If there is a developing map $f$ which is surjective and with connected fibers then the base-like cohomology $H(M/\F)$ is isomorphic to $H_{\Gamma}(N)$.
\end{teo}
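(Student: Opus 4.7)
The plan is to produce an isomorphism of complexes $\Omega^\bullet(M/\F)\cong \Omega^\bullet_\Gamma(N)$, from which the statement on cohomology follows by passing to $H^\bullet$. By the structure theorem \ref{desth}, the regular covering $p\colon \widetilde M\to M$ with $\Aut(p)\cong\Gamma$ and the $h$-equivariant developing map $f\colon\widetilde M\to N$ gives us two natural maps to chain through: $p^*$ from $M$ up to $\widetilde M$, and $f^*$ from $N$ up to $\widetilde M$. Both land inside the basic complex of $\widetilde\F=f^*\pt$, and the game is to show that together they identify $\Omega^\bullet(M/\F)$ with the $\Gamma$-invariants of $\Omega^\bullet(N)$.

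First I would handle the covering step. Because $p$ is a regular (local) diffeomorphism and $\widetilde\F=p^*\F$, pullback gives an injection $p^*\colon \Omega^\bullet(M/\F)\hookrightarrow \Omega^\bullet(\widetilde M/\widetilde\F)$ whose image is exactly the subcomplex of $\Aut(p)$-invariant base-like forms. This is a standard ``$G$-invariants of the cover = forms on the base'' argument for covering maps, applied fiberwise to the basic subcomplex: invariance is forced because $\Aut(p)$ acts freely and properly discontinuously, and any invariant form descends smoothly because $p$ is a local diffeomorphism.

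Next I would trivialize the lifted foliation via $f$. Since $\widetilde\F=f^*\pt$, the pullback $f^*\colon \Omega^\bullet(N)\to \Omega^\bullet(\widetilde M/\widetilde\F)$ is clearly into the basic complex (pullbacks of forms on the base of a submersion are always basic for the fibration). The key claim is that $f^*$ is an \emph{isomorphism of complexes} under the hypotheses that $f$ is surjective with connected fibers: surjectivity gives injectivity of $f^*$, while connectedness of the fibers is precisely what is needed for every basic form on $\widetilde M$ to descend (a form horizontal and invariant along the fibers of a submersion defines a form on the base iff it is fiberwise constant, which for a \emph{connected} fiber is equivalent to being horizontal and invariant). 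This step is essentially the content of Lemma \ref{sfc} upgraded from cohomology to the chain level, and I expect it to be the main technical point.

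Finally I would combine the two identifications and track the $\Gamma$-action. Under the equivariance relation $f\circ\gamma=\lambda(h(\gamma))\circ f$ from Theorem \ref{desth}(3), for any $\beta\in\Omega^\bullet(N)$ and $\gamma\in\Aut(p)$ we get
\[
\gamma^*(f^*\beta)=(f\circ\gamma)^*\beta=f^*\bigl(\lambda(h(\gamma))^*\beta\bigr),
\]
so the $\Aut(p)$-action on $\Omega^\bullet(\widetilde M/\widetilde\F)$ corresponds under $f^*$ to the $\Gamma$-action on $\Omega^\bullet(N)$ via $h$. Composing the two chain-level isomorphisms gives
\[
\Omega^\bullet(M/\F)\;\cong\;\Omega^\bullet(\widetilde M/\widetilde\F)^{\Aut(p)}\;\cong\;\Omega^\bullet(N)^{\Gamma}\;=\;\Omega^\bullet_\Gamma(N),
\]
and taking cohomology yields $H^\bullet(M/\F)\cong H^\bullet_\Gamma(N)$. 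The hypothesis that fibers are connected is used essentially in the middle isomorphism; without it one would only recover a subcomplex of $\Omega^\bullet(N)$ consisting of forms whose pullbacks are fiberwise constant, and the identification with $\Omega^\bullet_\Gamma(N)$ could fail.
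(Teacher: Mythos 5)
Your proof is correct and follows essentially the same route as the paper's, which is quite terse: first use $p^*$ to identify $\Omega^\bullet(M/\F)$ with the $\Aut(p)$-invariant base-like forms on $\widetilde M$, then use $f^*$ to identify the $\Gamma$-invariant forms on $N$ with those. Your version is a bit more careful than the paper's one-liner in that you make explicit that $f^*$ is an isomorphism of \emph{complexes} (not just in cohomology, which is what Lemma~\ref{sfc} states), so that taking $\Aut(p)$-invariants commutes with passing to cohomology — this is exactly the detail the paper leaves implicit when it writes ``$f^*\colon H^r_\Gamma(N)\to H^r_{\invar}(\widetilde M/\widetilde\F)$ is an isomorphism.''
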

\begin{proof}
Let $h \colon \Aut(p)\cong \Gamma\subset G_\sharp$ be the isomorphism given by the structure theorem \ref{desth}.
The covering map
$p$ induces an isomorphism $p^*\colon \Omega^\bullet(M/{\F})\to \Omega^\bullet_{\invar}(\widetilde M/\widetilde{\F})$ between the base-like forms for $(M,\F)$  and the base-like forms for  $(\widetilde M,\widetilde \F)$ which are invariant by the action of ${\rm Aut}(p)$.

Now it is enough to check that $f^*\colon H^r_{\Gamma}(N) \to  H^r_{\invar}(\widetilde M/\widetilde \F)$ is an isomorphism.
\end{proof}
Theorem \ref{MAININTRO} gives sufficient conditions for the foliation $\F$ to be unimodular. For a discussion on the surjectiveness and connectedness of the fibers of the developing map see  Remark \ref{CONECCTEDFIB} and Proposition \ref{friemannianasikcompacto}.

\begin{teo} \label{confibrasconexas}
Let $\F$ be an $N$-transversely homogeneous foliation   on the compact manifold $M$, which admits a developing map with connected fibers. We assume that $N=G_\sharp/K_\sharp$ is connected.
If  $G_\sharp$   is strongly unimodular, $K_\sharp$ is compact and strongly unimodular, and $\overline \Gamma$ is unimodular,  then the foliation $\F$ is unimodular.  
\end{teo}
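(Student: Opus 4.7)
The proof proposal is to assemble the result by composing three earlier theorems in sequence: the identification of basic cohomology with invariant cohomology on $N$ (Theorem \ref{cohth}), the computation of the top invariant cohomology of the homogeneous space (Proposition \ref{equivuni}), and the injectivity-type transfer from $G_\sharp$-invariance to $\Gamma$-invariance (Theorem \ref{MAININTRO}).

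First I would verify the hypotheses of Theorem \ref{cohth}. By assumption the developing map $f\colon \widetilde M\to N$ has connected fibers, so it only remains to see that it is surjective. Since $M$ is compact and $K_\sharp$ is compact, Proposition \ref{friemannianasikcompacto} says $f$ is a locally trivial bundle onto the connected manifold $N$; in particular surjective. Thus
\begin{equation*}
H^q(M/\F)\cong H^q_\Gamma(N).
\end{equation*}
So it suffices to prove $H^q_\Gamma(N)\neq 0$.

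Next I would check that Proposition \ref{equivuni} applies to compute $H^q_{G_\sharp}(N)$. Because $K_\sharp$ is compact, $N=G_\sharp/K_\sharp$ carries a $G_\sharp$-invariant Riemannian metric, and $G_\sharp$ acts effectively by construction; hence by Proposition \ref{EFFISOM} the pair $(\g,K_\sharp)$ is reductive. Since $G_\sharp$ and $K_\sharp$ are strongly unimodular by hypothesis, Proposition \ref{equivuni} gives
\begin{equation*}
H^q_{G_\sharp}(N)\neq 0.
\end{equation*}

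Finally I would feed this into Theorem \ref{MAININTRO}. The compactness of $W=\overline\Gamma\backslash G_\sharp$ is exactly the content of Proposition \ref{fblucompacto}\,(\ref{fblucompacto2}), which applies because $M$ and $K_\sharp$ are compact. The remaining hypotheses (strong unimodularity of $G_\sharp$, unimodularity of $\overline\Gamma$, and $H^q_{G_\sharp}(N)\neq 0$) are in hand. Theorem \ref{MAININTRO} then yields $H^q_\Gamma(N)\neq 0$, so $H^q(M/\F)\neq 0$ and $\F$ is unimodular.

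The proof is essentially a bookkeeping exercise, so the only genuine obstacle is making sure each hypothesis is literally satisfied at each step: in particular, checking that the compactness of $K_\sharp$ really implies reductivity of $(\g,K_\sharp)$ (via the existence of an invariant metric and Proposition \ref{EFFISOM}), and that Proposition \ref{fblucompacto} delivers compactness of $W$ without any additional connectedness assumption on $G_\sharp$. Once these small verifications are in place, the chain $H^q(M/\F)\cong H^q_\Gamma(N)\supseteq i^*H^q_{G_\sharp}(N)\neq 0$ gives the result.
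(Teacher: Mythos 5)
Your proposal is correct and reproduces the paper's own proof step by step: Proposition \ref{friemannianasikcompacto} to get surjectivity of $f$, Theorem \ref{cohth} to identify $H(M/\F)$ with $H_\Gamma(N)$, Propositions \ref{EFFISOM} and \ref{equivuni} to get $H^q_{G_\sharp}(N)\neq 0$, Proposition \ref{fblucompacto} to get compactness of $W$, and Theorem \ref{MAININTRO} to transfer the non-vanishing to $H^q_\Gamma(N)$. The bookkeeping checks you flag (reductivity via the invariant metric, compactness of $W$ without connectedness of $G_\sharp$) are exactly the ones the paper relies on, so nothing further is needed.
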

\begin{proof}
Since $M$ and $K_\sharp$ are compact,  Proposition \ref{friemannianasikcompacto} states that the developing map $f$ is a (surjective) locally trivial bundle.  Since the fibers of $f$ are connected,  Theorem \ref{cohth} ensures that  $H(M/\F)\cong H_{\overline \Gamma}(N)$. On the other hand,  the pair $(\g,K_\sharp)$ is reductive, by Proposition \ref{EFFISOM}. Finally, since $G_\sharp$ and $K_\sharp$ are strongly unimodular, we know that $H^q_{G_\sharp}(N)\not =0$, $q=\codim\F$, by Proposition \ref{equivuni}.

Now, since $M$ and $K_\sharp$ are compact, Proposition \ref{fblucompacto2} says that $W=\overline\Gamma\backslash G_\sharp$ is compact, so we have the hypothesis   to apply Theorem \ref{MAININTRO} and to obtain 
 that
 $$H(M/\F)=H_\Gamma(N)=H_{\overline\Gamma}(N)\not =0.\qedhere$$
\end{proof}

\subsection{Example}
In this subsection we illustrate some of the results of the paper with an example.

Let us consider the transitive action of $G_0=\mathrm{SL}(2,\R)$ on the  complex upper half-plane $N={\mathbb H}$, given by
$$\begin{bmatrix}x&y\cr z&t\cr\end{bmatrix}\cdot \omega =\dfrac{x\omega+y}{z\omega+t}.$$
The isotropy of $\omega=i$ is the subgroup $K_0= \mathrm{SO}(2)$, which is compact and connected.  The normal core is the only proper normal subgroup of $G_0$, that is,  is $\{\pm I\}$, so  $G_{0\sharp}=\mathrm{PSL}(2,\R)$ and $K_{0\sharp}=\mathrm{SO}(2)/\{\pm I\}$. Let $\Gamma_0\subset G_{0\sharp}$ be a discrete cocompact subgroup.  We have a transversely homogeneous foliation on the compact manifold $M=\Gamma_{0\sharp}\backslash G_{0\sharp}$,
whose holonomy is $\Gamma_0$ and whose developing map
$f\colon G_{0\sharp} \to N$
is given by $f(A)=A\cdot i$.

If $K_{0\sharp}\cap \Gamma_0=\{ I\}$ then the manifold $M$ is the unitary tangent bundle over $\Gamma_0\backslash {\mathbb H}$ and the foliation is defined as a fiber bundle. If $K_{0\sharp}\cap \Gamma_0\not =\{ I\}$ then the leaves have holonomy and the foliation is defined as a bundle over a Satake manifold  
\cite[p. 89]{Molino1988}.

We now check the hypothesis of Theorem \ref{confibrasconexas}, in order to show that the foliation is unimodular.

The fibers of $f$ are connected. The isotropy is compact, so there is an invariant metric on $N$, namely, $\mathrm{PSL}(2,\R)$ is the  group of orientation preserving isometries of the hyperbolic metric $(dx^2+dy^2)/y^2$. The manifold $W=\Gamma_0\backslash G_{0\sharp}$ is compact.  The group $K_{0\sharp}$ is (strongly) unimodular, because it is compact. The subgroup $\Gamma_0$ is discrete, hence unimodular. Finally, the Lie group $\mathrm{PSL}(2,\R)$  is unimodular too, because its Lie algebra is unimodular: namely, it admits a basis $X,Y,Z$ subject to the relations  $[X,Y]=2Y$; $[X,Z]=-2Z$ and $[Y,Z]=X$.

This example can be easily generalized to $\SL(n,\R)$.

\section{Non-connected fibers}\label{NONCONN}
We study now the case when the fibers of the developing map are not connected. We start with a connected Lie group $G_0$ which acts transitively on $N$, but we do not assume  the action to be effective.  This will allow us to model the foliation on the universal covering $\widehat N$, where the developing map will have connected fibers. 

\subsection{Auxiliary constructions} \label{construcciondelextended}
Let $N=G_0/K_0$ be a homogeneous space, where $G_0$ is {\em connected} (we  do not assume  $G_0$ to be simply connected).  If $\pi\colon \widehat G_0\to G_0$ is the universal covering of $G_0$, we have
$$N=\widehat G_0/\pi^{-1}(K_0).$$
For the sake of simplicity, we shall denote $\widehat K_0=\pi^{-1}(K_0)$, even if this group may not be the universal covering of $K_0$.

We maintain our notations $\core(K_0)$ (respectively $\core(\widehat K_0)$) for the normal core of the action of $G_0$ (resp.  $\widehat G_0$) on $N$.

\begin{prop}\label{coredeluniversal} We have
\begin{align}
(\widehat G_0)_\sharp=\widehat G_0/\core(\widehat K_0) &\cong G_0/\core(K_0)=(G_0)_\sharp,\\
(\widehat K_0)_\sharp=\widehat K_0/\core(\widehat K_0) &\cong K_0/\core(K_0)=(K_0)_\sharp.
\end{align}
\end{prop}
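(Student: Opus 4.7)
The plan is to show that both isomorphisms follow from a single observation: the core of $\widehat K_0$ is precisely the $\pi$-preimage of the core of $K_0$. Once this is in place, both isomorphisms are instances of the first isomorphism theorem.

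First I would unwind the action. Because $\widehat K_0 = \pi^{-1}(K_0)$, the natural action of $\widehat G_0$ on $N = \widehat G_0/\widehat K_0$ descends through $\pi$ to the action of $G_0$ on $N = G_0/K_0$; explicitly, if $\widehat\lambda\colon \widehat G_0 \to \Diff(N)$ and $\lambda\colon G_0 \to \Diff(N)$ are the action homomorphisms, then $\widehat\lambda = \lambda \circ \pi$. Taking kernels and invoking Definition \ref{defcore} gives
\[
\core(\widehat K_0) = \ker \widehat\lambda = \pi^{-1}(\ker \lambda) = \pi^{-1}(\core(K_0)).
\]

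For the first isomorphism, I would consider the composition
\[
\widehat G_0 \xrightarrow{\;\pi\;} G_0 \longrightarrow G_0/\core(K_0).
\]
This is surjective (since $\pi$ is) and its kernel is precisely $\pi^{-1}(\core(K_0)) = \core(\widehat K_0)$. The first isomorphism theorem then yields $\widehat G_0/\core(\widehat K_0) \cong G_0/\core(K_0)$, which is exactly (1).

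For the second isomorphism I would restrict $\pi$ to $\widehat K_0$. The restriction $\pi|_{\widehat K_0}\colon \widehat K_0 \to K_0$ is surjective: given $k \in K_0$, any lift $\widehat k \in \widehat G_0$ of $k$ automatically lies in $\pi^{-1}(K_0) = \widehat K_0$. Composing with the projection $K_0 \to K_0/\core(K_0)$ I again get a surjection whose kernel is $\pi^{-1}(\core(K_0)) \cap \widehat K_0 = \core(\widehat K_0)$ (the intersection is redundant because $\core(\widehat K_0) \subset \widehat K_0$ by Proposition \ref{caractcore}). The first isomorphism theorem then produces (2).

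The argument is essentially formal once the factorization $\widehat\lambda = \lambda\circ\pi$ is noted, so there is no serious obstacle. The only point that deserves a moment of care is confirming that $\pi|_{\widehat K_0}$ is surjective onto $K_0$ and that $\core(\widehat K_0)$ is contained in $\widehat K_0$; both are immediate from the definitions, the former from $\widehat K_0 = \pi^{-1}(K_0)$ together with surjectivity of $\pi$, the latter from the characterization of the normal core as a subgroup of the isotropy.
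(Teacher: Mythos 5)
Your argument is correct, and the key observation — that the $\widehat G_0$-action on $N$ factors through $\pi$, so $\widehat\lambda=\lambda\circ\pi$ and hence $\core(\widehat K_0)=\pi^{-1}(\core(K_0))$ — is exactly the right one; both isomorphisms then drop out of the first isomorphism theorem applied to the surjections $\widehat G_0\to G_0/\core(K_0)$ and $\widehat K_0\to K_0/\core(K_0)$. The paper states this proposition without a proof, so there is nothing to compare against, but your route is the natural one the authors evidently had in mind, and your check that $\core(\widehat K_0)\subset\widehat K_0$ (so the kernel of the restricted map really is $\core(\widehat K_0)$) closes the only step that could have been glossed over.
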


\begin{prop}\label{cubierta} Let $(\widehat K_0)_e$ denote the connected component of the identity of the subgroup $\widehat K_0$. Then, the universal covering of  $N$ is $\widehat N=\widehat G_0/(\widehat K_0)_e$, and the fundamental group $\pi_1(N)$ is isomorphic to $\widehat K_0/(\widehat K_0)_e$. 
\end{prop}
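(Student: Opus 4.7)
The plan is to exploit the tower
$$\widehat G_0 \;\longrightarrow\; \widehat G_0/(\widehat K_0)_e \;\longrightarrow\; \widehat G_0/\widehat K_0 \;=\; N,$$
and to show that the middle space is simply connected while the second arrow is a regular covering with deck group $\widehat K_0/(\widehat K_0)_e$. Note first that $\widehat K_0 = \pi^{-1}(K_0)$ is a closed Lie subgroup of $\widehat G_0$ (being the preimage of the closed subgroup $K_0$), and therefore so is its identity component $(\widehat K_0)_e$; hence both quotients in the tower are smooth manifolds and the quotient maps are principal bundles in the appropriate sense.

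For the simple connectedness of $\widehat G_0/(\widehat K_0)_e$, I would invoke the long exact sequence of homotopy groups of the principal bundle $(\widehat K_0)_e \hookrightarrow \widehat G_0 \to \widehat G_0/(\widehat K_0)_e$, whose relevant segment is
$$\pi_1(\widehat G_0) \longrightarrow \pi_1\bigl(\widehat G_0/(\widehat K_0)_e\bigr) \longrightarrow \pi_0\bigl((\widehat K_0)_e\bigr) \longrightarrow \pi_0(\widehat G_0).$$
The first term vanishes because $\widehat G_0$ is by construction the universal cover of $G_0$, and the third vanishes by the very definition of the identity component, so $\pi_1(\widehat G_0/(\widehat K_0)_e) = 0$.

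For the covering statement, observe that $(\widehat K_0)_e$ is normal in $\widehat K_0$ (the identity component of any topological group is a normal subgroup), so the discrete quotient $\widehat K_0/(\widehat K_0)_e$ acts freely and properly discontinuously by right translation on $\widehat G_0/(\widehat K_0)_e$, with orbit space
$$\bigl(\widehat G_0/(\widehat K_0)_e\bigr)\big/\bigl(\widehat K_0/(\widehat K_0)_e\bigr) \;\cong\; \widehat G_0/\widehat K_0 \;=\; N.$$
Consequently the projection $\widehat G_0/(\widehat K_0)_e \to N$ is a regular covering whose deck group is $\widehat K_0/(\widehat K_0)_e$. Combined with the previous step, this identifies $\widehat G_0/(\widehat K_0)_e$ with the universal covering $\widehat N$ of $N$, and the deck group with $\pi_1(N)$, as claimed.

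I do not anticipate a genuine obstacle here; everything rests on two classical facts (the homotopy exact sequence for a Lie group fibration, and the fact that a free properly discontinuous action by a discrete group yields a regular covering). The only mildly delicate point is bookkeeping about which group acts on which quotient, but once one checks that $(\widehat K_0)_e$ is closed in $\widehat G_0$ and normal in $\widehat K_0$, the rest is automatic.
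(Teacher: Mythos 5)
Your argument is correct, and since the paper states Proposition \ref{cubierta} without providing a proof (it is treated as a standard fact about homogeneous spaces), there is no paper proof to compare against; your route via the homotopy exact sequence of the bundle $(\widehat K_0)_e\hookrightarrow \widehat G_0\to \widehat G_0/(\widehat K_0)_e$, together with the observation that $\widehat G_0/(\widehat K_0)_e\to N$ is a fiber bundle with discrete fiber $\widehat K_0/(\widehat K_0)_e$ and hence a regular covering, is exactly the natural justification. The only point worth making explicit, which you partly glide over, is why the induced right action of $\widehat K_0/(\widehat K_0)_e$ on $\widehat G_0/(\widehat K_0)_e$ is properly discontinuous: it follows either from the fact that the map $\widehat G_0/(\widehat K_0)_e\to\widehat G_0/\widehat K_0$ is a locally trivial bundle (because $\widehat K_0$ is a closed Lie subgroup of $\widehat G_0$ and $(\widehat K_0)_e$ is closed and normal in $\widehat K_0$), so a bundle with discrete fiber over a connected manifold is automatically a covering, or from properness of the right $\widehat K_0$-action on $\widehat G_0$ passing to the quotient. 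With that clarification, the proof is complete.
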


In order to get an equivariant map onto the homogeneous space $\widehat N$, we need to enlarge the group $G_{0\sharp}=G_0/\core(K_0)$, which acts effectively on $\widehat N$, by the group of deck transformations of the covering $\pi_N\colon \widehat N \to N$. More precisely we have the following technical definition.

\begin{defi}\label{EXTENDED}
Let $G$ denote the Lie group $$G\eqdef\widehat G_0/\core((\widehat K_0)_e)\times \widehat K_0/(\widehat K_0)_e,$$
which we call the {\em extended} group.  
\end{defi}

This extended group acts transitively on $\widehat N=\widehat G_0/(\widehat K_0)_e$, where the action is given by
$$([g],[k])\cdot [h] =[ghk\inv],\quad g,h\in \widehat G_0, k\in \widehat K_0.$$

\allowdisplaybreaks

\begin{prop} \label{isotropiascoinciden}\ 
\begin{enumerate}
\item
The   isotropy  of the action at the point $[e]\in\widehat N$  is the subgroup
$$i(\widehat K_0)=\{ ([k],[k])\colon k\in \widehat K_0\},$$
which is isomorphic to
 $$K\eqdef\widehat K_0/\core((\widehat K_0)_e).$$
\item
The
normal core $\core_G(K)$ of this action is 
$$
i(\core(\widehat K_0))=\{ ([k] ,[k])\colon k\in \core(\widehat K_0)\},
$$
which is isomorphic to the  abelian group
$$\core(\widehat K_0)/\core((\widehat K_0)_e).$$
 \item
The Lie group $G_\sharp=G/\core(K)$ acts transitively and effectively on $\widehat N$, with isotropy  $$K_\sharp=K/\core(K)\cong \widehat K_{0\sharp}  \cong K_{0\sharp}.$$
\end{enumerate}
\end{prop}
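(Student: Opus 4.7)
The plan is to bundle the three parts together around the diagonal map $i\colon \widehat K_0\to G$ sending $k\mapsto ([k],[k])$, which is well defined thanks to the nested inclusions $\core((\widehat K_0)_e)\subset(\widehat K_0)_e\subset \widehat K_0$. The characterizations of the normal core given in Proposition \ref{caractcore} will be used repeatedly as the workhorse.

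For (1), I would unwind the stabilizer condition $([g],[k])\cdot[e]=[e]$ in $\widehat N=\widehat G_0/(\widehat K_0)_e$: writing out the action yields $gk^{-1}\in(\widehat K_0)_e$, which in particular forces $g\in \widehat K_0$. Given any such pair, $i(g)=([g],[g])$ already agrees with $([g],[k])$ in the second factor because $gk^{-1}\in(\widehat K_0)_e$, so $i$ maps onto the isotropy. Its kernel consists of those $k\in\widehat K_0$ with $k\in\core((\widehat K_0)_e)$ (first-factor condition) and $k\in(\widehat K_0)_e$ (second-factor condition); since the latter contains the former, $\ker i=\core((\widehat K_0)_e)$ and we obtain the isomorphism $i(\widehat K_0)\cong K$.

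For (2), the identity $([g],[l])\,i(k)\,([g],[l])^{-1}=([gkg^{-1}],[lkl^{-1}])$ is the starting point. Taking $l=e$ and letting $g$ range over $\widehat G_0$, requiring the conjugate to lie in $K$ forces $gkg^{-1}\in \widehat K_0$ for every $g$, which by Proposition \ref{caractcore} means exactly $k\in\core(\widehat K_0)$. Conversely, for such $k$, the continuous map $\widehat G_0\to\widehat K_0$, $g\mapsto gkg^{-1}$, has connected image containing $k$, hence lies in $k(\widehat K_0)_e$; applying the same fact to $l\in\widehat K_0\subset \widehat G_0$ and exploiting the normality of $(\widehat K_0)_e$ in $\widehat K_0$, the discrepancy $lkl^{-1}(gkg^{-1})^{-1}$ belongs to $(\widehat K_0)_e$, whence $([gkg^{-1}],[lkl^{-1}])\in K$. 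Restricting $i$ to $\core(\widehat K_0)$ and noting $\core((\widehat K_0)_e)\subset \core(\widehat K_0)$ (every subgroup of $(\widehat K_0)_e$ that is normal in $\widehat G_0$ is also a subgroup of $\widehat K_0$ normal in $\widehat G_0$) identifies $\core_G(K)\cong \core(\widehat K_0)/\core((\widehat K_0)_e)$. Abelianness follows from the general fact that a discrete normal subgroup of a connected Lie group is central: applied to $\core(\widehat K_0)/\core(\widehat K_0)_e\vartriangleleft \widehat G_0/\core(\widehat K_0)_e$, it makes that discrete quotient abelian, and $\core(\widehat K_0)/\core((\widehat K_0)_e)$ is a further quotient of it.

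Item (3) is then formal: transitivity of $G$ is inherited from that of the $\widehat G_0$-factor, while the effectiveness of $G_\sharp$ and the form $K_\sharp=K/\core_G(K)$ of its isotropy are Proposition \ref{isotropiagks} applied to our action. Finally, the third isomorphism theorem gives
$$K/\core_G(K)\cong \bigl(\widehat K_0/\core((\widehat K_0)_e)\bigr)\big/\bigl(\core(\widehat K_0)/\core((\widehat K_0)_e)\bigr)\cong \widehat K_0/\core(\widehat K_0)=\widehat K_{0\sharp},$$
and the identification with $K_{0\sharp}$ is Proposition \ref{coredeluniversal}. The step I expect to be the main obstacle is the sufficiency direction of (2): showing that for $k\in\core(\widehat K_0)$ the second-factor discrepancy really stays inside $(\widehat K_0)_e$. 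Connectedness of $\widehat G_0$ is essential here, and this is precisely why the extended group in Definition \ref{EXTENDED} pairs $\widehat G_0/\core((\widehat K_0)_e)$ with $\widehat K_0/(\widehat K_0)_e$ rather than with any finer quotient: it provides just enough slack to absorb the connected-component variation of the conjugates of~$k$.
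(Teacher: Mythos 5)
Your proof is correct; the paper states this proposition without an argument, and your verification is precisely the direct unwinding via Propositions \ref{caractcore}, \ref{isotropiagks}, and \ref{coredeluniversal} that the authors evidently left to the reader. You rightly identify and handle the one genuine subtlety — in the sufficiency direction of (2), connectedness of $\widehat G_0$ is what keeps $gkg^{-1}$ in the coset $k(\widehat K_0)_e$, and normality of $(\widehat K_0)_e$ in $\widehat K_0$ absorbs the discrepancy — and the small slips (the typo $\core(\widehat K_0)_e$ for $\core((\widehat K_0)_e)$, the redundant phrase ``a further quotient of it'') do not affect the substance.
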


\begin{remark} Notice that the Lie group $G_\sharp$ may not be connected. In fact, $\pi_0(G_\sharp)=\pi_0(K_\sharp)$, where $K_\sharp=K/\core(K)$, and the connected component of the identity of $G_\sharp$ is diffeomorphic to $ \widehat G_0/\core((\widehat K_0)_e)$.
\end{remark}

\begin{lema}\label{cubiertaauxiliar}
The projection
\begin{equation}\label{morfismoq}
q\colon \widehat G_0/\core((\widehat K_0)_e)\to G_{0\sharp}=\widehat G_0/\core(\widehat K_0)
\end{equation}
 is a covering of Lie groups, with automorphism group the abelian group $\core(\widehat K_0)/\core((\widehat K_0)_e)$.
\end{lema}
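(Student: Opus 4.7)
The plan is to exhibit $q$ explicitly as a quotient morphism with discrete (hence central) kernel, and then read off the automorphism group.

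First I would check that $q$ is well-defined: since $(\widehat K_0)_e\subseteq\widehat K_0$, taking intersections of conjugates gives $\core((\widehat K_0)_e)\subseteq\core(\widehat K_0)$, and both are normal subgroups of $\widehat G_0$ (being intersections of conjugate subgroups). Hence there is a natural surjective Lie group morphism $q\colon \widehat G_0/\core((\widehat K_0)_e)\to\widehat G_0/\core(\widehat K_0)$ whose kernel, by the third isomorphism theorem, is exactly $\core(\widehat K_0)/\core((\widehat K_0)_e)$.

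The heart of the argument is to prove this kernel is discrete. I would show the stronger inclusion
$$\big(\core(\widehat K_0)\big)_e \;\subseteq\; \core((\widehat K_0)_e).$$
To see this, note that for every $g\in\widehat G_0$ the conjugation $c_g$ is a homeomorphism, so $(g\widehat K_0 g^{-1})_e=g(\widehat K_0)_e g^{-1}$. Therefore
$$\core((\widehat K_0)_e)=\bigcap_{g\in\widehat G_0}g(\widehat K_0)_e g^{-1}=\bigcap_{g}\big(g\widehat K_0 g^{-1}\big)_e\;\supseteq\;\Big(\bigcap_{g}g\widehat K_0 g^{-1}\Big)_e=\big(\core(\widehat K_0)\big)_e,$$
where the middle inclusion uses the routine fact that the identity component of an intersection of (closed) subgroups is contained in the intersection of their identity components. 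Since $(\core(\widehat K_0))_e$ is open in $\core(\widehat K_0)$, the quotient $\core(\widehat K_0)/\core((\widehat K_0)_e)$ is a quotient of the discrete group $\core(\widehat K_0)/(\core(\widehat K_0))_e$ and is therefore itself discrete.

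With $\ker q$ discrete and normal in the connected Lie group $\widehat G_0/\core((\widehat K_0)_e)$ (connected because $\widehat G_0$ is), the standard argument (continuous conjugation of a discrete normal subgroup is constant on the connected domain) shows that $\ker q$ is central, and in particular abelian. This makes $q$ a covering of Lie groups whose deck transformation group coincides with $\ker q=\core(\widehat K_0)/\core((\widehat K_0)_e)$, as claimed. The only subtle step is the inclusion $(\core(\widehat K_0))_e\subseteq\core((\widehat K_0)_e)$; everything else is then formal.
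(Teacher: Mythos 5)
Your argument is correct. The paper states Lemma~\ref{cubiertaauxiliar} without proof, so there is no official argument to compare against, but your reconstruction fills the gap cleanly. The key point --- showing $\ker q=\core(\widehat K_0)/\core((\widehat K_0)_e)$ is discrete --- is exactly the non-formal step, and you establish it via the inclusion $\bigl(\core(\widehat K_0)\bigr)_e\subseteq\core((\widehat K_0)_e)$. That inclusion can also be seen slightly more directly from Proposition~\ref{caractcore}(3): the subgroup $\bigl(\core(\widehat K_0)\bigr)_e$ is connected, contains $e$, lies inside $\widehat K_0$, and is normal in $\widehat G_0$ (the identity component of a normal subgroup is normal); being connected it therefore lies in $(\widehat K_0)_e$, and being normal in $\widehat G_0$ it lies in $\core((\widehat K_0)_e)$, the largest normal subgroup of $\widehat G_0$ contained in $(\widehat K_0)_e$. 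Your intersection-by-intersection chain is an equally valid route to the same fact. The remaining steps (openness of $\core((\widehat K_0)_e)$ in $\core(\widehat K_0)$, discreteness of the quotient, centrality of a discrete normal subgroup of the connected group $\widehat G_0/\core((\widehat K_0)_e)$, and identification of the deck group of a Lie group covering with its kernel) are all standard and applied correctly.
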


\begin{prop}\label{COVERCERO}
The Lie group  $G_\sharp$  is a  (maybe non-connected) covering of the connected Lie group  $G_{0\sharp}$. More precisely,  $G_\sharp$ is an extension of  $G_{0\sharp}$ by $\widehat K_0/(\widehat K_0)_e$.
\end{prop}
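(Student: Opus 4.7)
The plan is to construct the covering morphism $G_\sharp \to G_{0\sharp}$ explicitly as the descent of the first-factor projection of $G$ composed with the covering $q$ of Lemma~\ref{cubiertaauxiliar}. Set $A=\core(\widehat K_0)/\core((\widehat K_0)_e)$ (which is $\ker q$, and is abelian by Proposition~\ref{isotropiascoinciden}) and $B=\widehat K_0/(\widehat K_0)_e$, and define
$$\phi\colon G=\widehat G_0/\core((\widehat K_0)_e)\times B \longrightarrow G_{0\sharp},\qquad \phi\bigl([g],[k]\bigr)=q([g]).$$
This is a Lie group morphism, surjective because $q$ is, with kernel $A\times B$. By Proposition~\ref{isotropiascoinciden}(2), the normal core $\core_G(K)$ is the set of elements $(a,\iota(a))$ with $a\in A$, where $\iota\colon A\to B$ is induced by the inclusion $\core(\widehat K_0)\hookrightarrow \widehat K_0$ (well defined since $\core((\widehat K_0)_e)\subset (\widehat K_0)_e$). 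In particular $\core_G(K)\subset A\times B$, so $\phi$ descends to a surjective Lie group morphism $\bar\phi\colon G_\sharp=G/\core_G(K)\to G_{0\sharp}$.

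The crux is then to identify $\ker\bar\phi=(A\times B)/\core_G(K)$ with $B$. For this I would exploit that $\core_G(K)$ is normal in $G$, hence in $A\times B$: conjugating $(a,\iota(a))\in\core_G(K)$ by $(a_0,b_0)\in A\times B$, and using that $A$ is abelian, gives $(a, b_0\iota(a)b_0^{-1})$; demanding this again lie in $\core_G(K)$ forces $b_0\iota(a)b_0^{-1}=\iota(a)$. Hence $\iota(A)$ is central in $B$, and the map
$$\Psi\colon A\times B\longrightarrow B,\qquad \Psi(a,b)=\iota(a)^{-1}b,$$
is a surjective group homomorphism with kernel precisely $\core_G(K)$. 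This yields $(A\times B)/\core_G(K)\cong B=\widehat K_0/(\widehat K_0)_e$.

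Finally, since $B$ is discrete (being $\pi_0$ of a Lie group), $\bar\phi$ is a Lie group covering, and we obtain the short exact sequence
$$1 \longrightarrow \widehat K_0/(\widehat K_0)_e \longrightarrow G_\sharp \stackrel{\bar\phi}{\longrightarrow} G_{0\sharp} \longrightarrow 1,$$
exhibiting $G_\sharp$ as an extension of the connected Lie group $G_{0\sharp}$ by $\widehat K_0/(\widehat K_0)_e$. The main obstacle is the identification of the kernel: the subgroup $\core_G(K)$ is not a direct factor of $A\times B$ but a ``graph'' subgroup, so one has to verify that the natural candidate $\Psi$ is a homomorphism, and this rests on the centrality of $\iota(A)$ in $B$, which is itself a nontrivial consequence of the normality of the core together with the abelianness of $A$ supplied by Proposition~\ref{isotropiascoinciden}.
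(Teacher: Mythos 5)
Your proof is correct, and it constructs exactly the same covering morphism as the paper: the map you call $\phi$, and its descent $\bar\phi$, coincide with the paper's $E$ defined by $E([([g],[k])])=q([g])$. The difference lies only in how the kernel is identified with $\widehat K_0/(\widehat K_0)_e$. The paper constructs the injective morphism $j\colon B\to G_\sharp$, $j([k])=[([e],[k])]$, and checks $\ker E=\im j$ by a direct coset manipulation (writing $[([g],[k])]=[([e],[kg^{-1}])]$ since $([g],[g])\in\core_G(K)$); this is short and needs nothing beyond the graph description of $\core_G(K)$. You instead build a surjection $\Psi\colon A\times B\to B$, $\Psi(a,b)=\iota(a)^{-1}b$, with kernel $\core_G(K)$, and you therefore must verify $\Psi$ is a homomorphism, which in turn requires $\iota(A)$ to be central in $B$ — a fact you correctly extract from the normality of $\core_G(K)$ together with the abelianness of $A$ from Proposition~\ref{isotropiascoinciden}. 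Your route is slightly longer and brings in the centrality lemma, which the paper's proof never needs; on the other hand, the centrality of $\iota(A)$ in $B$ is a genuine structural observation (one recovers $\core_G(K)$ precisely as a central graph subgroup of $A\times B$) that clarifies why the quotient $(A\times B)/\core_G(K)$ collapses so cleanly onto $B$, so your presentation arguably makes the mechanism of the extension more transparent even if it is not the most economical path.
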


\begin{proof}
Let us denote by
$i(\core(\widehat K_0))$ the subgroup of the extended group $G$ (Definition \ref{EXTENDED}) given by
$$\{([k],[k])\in G\colon k\in \core(\widehat K_0)\}.$$
Then
$$G_\sharp= G/i(\core(\widehat K_0)).$$
Consider the morphism 
$$j\colon \widehat K_0/(\widehat K_0)_e\to G_\sharp,
$$
given by
$$
j([k])=[([e],[k])].
$$

This morphism is injective because $[([e],[k])]=[([e],[e])]$ would imply that $([e],[k])\in i(\core(\widehat K_0))$, hence $[k]=[e]\in \widehat K_0/(\widehat K_0)_e$. 

Now, the projection 
$$E\colon G_\sharp=G/i(\core(\widehat K_0)) \to G_{0\sharp}=\widehat G_0/\core(\widehat K_0)$$
will be defined as 
\begin{equation}\label{defcubiertadeE}
E([([g],[k])])=q([g]),
\end{equation}
where $q$ is the morphism \eqref{morfismoq}. 

The projection $E$ is well defined, because if $([gk^\prime],[kk^\prime])$, with $k^\prime\in \core(\widehat K_0)$, is another representative of the class  $[([g],[k])]$ in $G_\sharp$, then $q([gk^\prime])=q([g])$. Trivially, the map $E$ is surjective.

It remains to show that 
$$
\widehat K_0/(\widehat K_0)_e\stackrel{j}\rightarrow G_\sharp\stackrel{E}\rightarrow \widehat G_0/\core(\widehat K_0)
$$
is an exact sequence, that is, $\ker E=\im j$. 

First,
$$
[([g],[k])]\in \ker E\Leftrightarrow q([g])=[e] \Leftrightarrow g\in \core(\widehat K_0).
$$
Then, the class $[g] \in \widehat G_0/\core((\widehat K_0)_e)$ belongs to  $\core(\widehat K_0)/\core((\widehat K_0)_e)$ and
$$
[([g],[k])]=[([e],[kg^{-1}])\cdot ([g],[g])]=j([kg^{-1}]),
$$
because $([g],[g])\in i(\core(\widehat K_0))$. 

Conversely,
$$
E j([k])=E([([e],[k])])=q([e])=[e].\qedhere
$$
\end{proof}

\begin{cor} \label{unimodularidaddee}
The Lie group $G_\sharp$ is strongly unimodular if and only if $G_{0\sharp}$ is unimodular. 
\end{cor}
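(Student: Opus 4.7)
The plan is to combine Proposition~\ref{COVERCERO} with Proposition~\ref{funcionmodulardee2}. Since $G_{0\sharp}=G_0/\core(K_0)$ is a quotient of the connected group $G_0$, it is itself connected; by the remark following Definition~\ref{deffuerteuni}, a connected unimodular Lie group is automatically strongly unimodular, so $G_{0\sharp}$ is unimodular if and only if $\det\Ad_{G_{0\sharp}}(h)=1$ for every $h\in G_{0\sharp}$. The corollary therefore reduces to proving the equivalence $\det\Ad_{G_\sharp}\equiv 1 \Longleftrightarrow \det\Ad_{G_{0\sharp}}\equiv 1$.

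Next, I would verify that the projection $E\colon G_\sharp\to G_{0\sharp}$ produced in Proposition~\ref{COVERCERO} is a covering of Lie groups in the sense of Proposition~\ref{funcionmodulardee2}, that is, a surjective morphism with discrete kernel. Surjectivity is already part of the proof of Proposition~\ref{COVERCERO}, and the kernel is the image $j(\widehat K_0/(\widehat K_0)_e)$. Since $(\widehat K_0)_e$ is the identity component of $\widehat K_0$, the quotient $\widehat K_0/(\widehat K_0)_e$ is discrete; one checks from the explicit Definition~\ref{EXTENDED} of the extended group that $j$ is topologically just the inclusion of this discrete second factor, so its image is discrete in $G_\sharp$. (Equivalently, the remark after Proposition~\ref{isotropiascoinciden} identifies the identity component of $G_\sharp$ with $\widehat G_0/\core((\widehat K_0)_e)$, and $E$ is essentially the covering map $q$ of Lemma~\ref{cubiertaauxiliar} on that component.)

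With $E$ identified as a covering of Lie groups, Proposition~\ref{funcionmodulardee2} delivers
$$\det\Ad_{G_\sharp}(g)=\det\Ad_{G_{0\sharp}}(E(g))$$
for every $g\in G_\sharp$. Surjectivity of $E$ then implies that the two determinant functions have the same image in $\R^*$, so one is identically $1$ precisely when the other is, which is exactly the claimed equivalence. I do not foresee a genuine obstacle: the statement is a bookkeeping consequence of the covering structure already extracted in Proposition~\ref{COVERCERO}, with the only point requiring any care being the topological verification that the kernel of $E$ is discrete in $G_\sharp$.
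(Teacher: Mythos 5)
Your proposal is correct and follows the same route as the paper, which simply cites Proposition~\ref{funcionmodulardee2}; you have merely made explicit what that citation presupposes, namely that $E$ from Proposition~\ref{COVERCERO} is a surjective morphism with discrete kernel and that connectedness of $G_{0\sharp}$ collapses ``unimodular'' to ``strongly unimodular'' on that side.
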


\begin{proof}
Immediate from Proposition  \ref{funcionmodulardee2}.
\end{proof}

\subsection{Unimodular foliations again}\label{unifolsection} Our main result of this paragraph is analogous to Theorem \ref{confibrasconexas}, but now we do not ask  the fibers of the developing map to be connected. In contrast, we need that the Lie group $G_0$ acting on $N$ be connected.

\subsubsection{Transverse model}
Let $\F$ be an $N$-transversely homogenous foliations on the {\em compact} manifold  $M$, with transverse model $N=G_0/K_0$, and holonomy group $\Gamma_0\subset G_{0\sharp}$. 

It was proved by Blumenthal in \cite[Theorem 4.1]{Blumenthal1979} that the universal covering $\widehat M$ of $M$ fibers over the universal covering of $\widehat N$,  the fibers being the leaves of the lifted foliation. Our next results refine this idea.

Let $\widetilde \F=p^*\F$  be the lifted foliation of $\F$ to the covering $\widetilde M$ given by the structure theorem \ref{desth}. Remember that $\widetilde \F$ is the simple foliation defined by the developing map
$f\colon \widetilde M\to N$. Let $\widehat N$ be the universal covering of $N$. This manifold $\widehat N$ is a  $G_\sharp$-homogeneous space, where $G_\sharp$  is the extension of $G_{0\sharp}$ given in Proposition \ref{COVERCERO}.

\begin{lemma} \label{ftildenfoliacion1}
The foliation $\widetilde \F$ on $\widetilde M$ is a transversely homogeneous foliation with transverse model  $\widehat N$. More precisely, if $\widehat M$ is the universal covering of $M$, the map $f$ lifts to a submersion $\widehat f\colon \widehat M\to \widehat N$, which  is a locally trivial bundle with connected fibers when $K_0$ is compact. 
 
Moreover, the holonomy  subgroup $\widetilde \Gamma_0$ of  $\widetilde \F$  is the image of the morphism 
$$\pi_1(f)\colon \pi_1(\widetilde M)\to\pi_1(N)\cong \widehat K_0/(\widehat K_0)_e\subset G_\sharp.$$
\end{lemma}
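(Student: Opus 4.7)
The plan is to verify the four claims of the lemma in order.

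\textbf{Step 1 (constructing $\widehat f$).} Because $\widehat M$ is the universal covering of $M$ and $p\colon \widetilde M\to M$ is a covering, there is a canonical covering projection $p'\colon \widehat M\to \widetilde M$ whose deck group is $\pi_1(\widetilde M)$. Since $\widehat M$ is simply connected, the composition $f\circ p'\colon \widehat M\to N$ admits a unique smooth lift $\widehat f\colon \widehat M\to \widehat N$ satisfying $\pi_N\circ \widehat f=f\circ p'$, where $\pi_N\colon \widehat N\to N$ is the universal covering of $N$ supplied by Proposition \ref{cubierta}. Being a submersion is a local property preserved by local diffeomorphisms, so $\widehat f$ inherits this from $f$.

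\textbf{Step 2 (transverse structure).} I would invoke Remark \ref{CONECCTEDFIB} applied to the covering $p'$. For $\gamma\in \pi_1(\widetilde M)=\Aut(p')$, the map $\widehat f\circ \gamma$ is another lift of $f\circ p'\circ \gamma=f\circ p'$, so by uniqueness there is a single deck transformation $\delta(\gamma)$ of $\pi_N$ with $\widehat f\circ \gamma=\delta(\gamma)\circ \widehat f$. The deck group of $\pi_N$ equals $\pi_1(N)\cong \widehat K_0/(\widehat K_0)_e$, and the injective morphism $j$ constructed in the proof of Proposition \ref{COVERCERO} embeds this group into $G_\sharp$. Setting $\widehat h=j\circ \delta\colon \pi_1(\widetilde M)\to G_\sharp$, the map $\widehat f$ becomes $\widehat h$-equivariant, so Remark \ref{CONECCTEDFIB} equips $\widetilde \F$ with the promised structure of $\widehat N$-transversely homogeneous foliation.

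\textbf{Step 3 (holonomy).} The standard path-lifting description of monodromy identifies $\delta(\gamma)$ with the class $[f\circ \alpha]\in \pi_1(N)$ for any loop $\alpha$ representing $\gamma$ in $\widetilde M$; equivalently $\delta=\pi_1(f)$. Hence $\widetilde \Gamma_0=\im(\widehat h)=\im(\pi_1(f))$ inside $\widehat K_0/(\widehat K_0)_e\subset G_\sharp$, as stated.

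\textbf{Step 4 (fibration and connected fibers when $K_0$ is compact).} Compactness of $K_0$ gives compactness of $K_\sharp\cong K_{0\sharp}$ through Proposition \ref{isotropiascoinciden} (it is the continuous image of $K_0$). Proposition \ref{friemannianasikcompacto} then makes $f\colon \widetilde M\to N$ a locally trivial bundle, and pulling back along the local diffeomorphisms $p'$ and $\pi_N$ transfers local triviality to $\widehat f$. Since $\widehat M$ and $\widehat N$ are both connected and simply connected, the tail $\pi_1(\widehat N)\to \pi_0(\widehat f^{-1}(\hat y))\to \pi_0(\widehat M)$ of the homotopy long exact sequence of $\widehat f$ has vanishing outer terms, forcing the fibers of $\widehat f$ to be connected.

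\textbf{Main difficulty.} The subtlest point is Step 2: one must check that the monodromy $\delta$ really lands in the deck group of $\pi_N$ (rather than in some larger subgroup of $G_\sharp$) and that the embedding $j$ from Proposition \ref{COVERCERO} is compatible with the possibly non-effective action of $G$ on $\widehat N$ and with the equivariance convention fixed by the structure theorem. Once Step 2 is in place the remaining steps are either routine covering-space manipulations or direct applications of results already proved in the paper.
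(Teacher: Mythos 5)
Your proof follows essentially the same route as the paper: both construct $\widehat f$ by lifting $f\circ p'$ through $\pi_N$ using simple connectedness, identify the equivariance morphism with $\pi_1(f)$ by the monodromy of the covering $\pi_N$ (your uniqueness-of-lifts formulation of $\delta(\gamma)$ is just a compact restatement of the paper's explicit path-lifting computation), and read off connectedness of the fibers from the long exact homotopy sequence. The only divergence is in the local triviality of $\widehat f$: the paper re-runs the Hermann Riemannian-submersion argument of Proposition \ref{friemannianasikcompacto} directly for $\widehat f\colon\widehat M\to\widehat N$, while you first apply it to $f$ and then transfer local triviality through the coverings $p'$ and $\pi_N$. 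Your transfer works (over a trivializing $U\subset N$ that is evenly covered by $\pi_N$, $\widehat f^{-1}(\widehat U)$ is open-closed in $(f\circ p')^{-1}(U)\cong U\times\widehat F_U$, hence a sub-product; connectedness of $\widehat N$ then pins down a single fiber type), but it needs a couple of lines of justification rather than the phrase ``transfers local triviality,'' whereas the paper's direct application of Hermann's theorem to $\widehat f$ is cleaner and requires no such check.
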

\begin{proof}
The existence of $\widehat f$ is granted by the homotopy lifting property of the covering  $\pi$, because $\widehat M$ and $\widehat N$ are simply connected.
Let us check that $\widehat f$ is equivariant for the morphism $f_*=\pi_1(f)$:

If $\gamma\in\Aut p=\pi_1(\widetilde M)$ and $\widehat x\in \widehat M$, denote $\widetilde x=p(\widehat x)\in \widetilde M$. The loop $\gamma$ with base point $\widetilde x$ lifts to a path $\widehat \gamma$ in $\widehat M$ with initial point $\widehat x$ and end point $\widehat\gamma\cdot \widehat x{\mathrel{:\mkern-0.25mu=}}\widehat \gamma(1)$. On the other hand, we have fixed base-points $x_0\in M$, $\widetilde x_0\in \widetilde M$ and $\widehat x_0$. For any path $\widehat\delta$ joining $\widehat x_0$ with $\widehat x$ we shall have the image path $\alpha= (f\circ p)(\widehat\delta)$ in $N$
joining  $(f\circ p)(\widehat x_0)$ con $(f\circ p)(\widehat x)$. By lifting this path to $\widehat N$ we shall have a path $\widehat \alpha$ with initial point $\widehat f (\widehat x_0)=\widehat n_0$ (a base-point previously fixed) and end point $\widehat f (\widehat x){\mathrel{:\mkern-0.25mu=}}\widehat \alpha(1)$.

Now we compute $\widehat f (\gamma\widehat x)$. We take the path $\widehat\delta \ast\widehat \gamma$ in $\widehat M$ , joining $\widehat x_0$ to $\gamma\widehat x$. Passing to $N$ through $f\circ p$ we obtain a path 
$$\beta=(f\circ p) (\widehat\delta\ast\widehat\gamma)=\alpha\ast f_*(\gamma),$$ which lifts to  $\widehat \beta= \widehat \alpha\ast \widehat{f_*(\gamma)}$. In this way, 
$$
\widehat f (\gamma \widehat x)=\widehat \beta(1)=\widehat{f_*(\gamma)}(1)=f_*(\gamma)\cdot \widehat f (\widehat x). 
$$

On the other hand, when $K_0$ is compact, an argument similar to that of Proposition \ref{friemannianasikcompacto} proves that $\widehat f$ is a locally trivial fiber bundle. The connectedness of the fibers follows from the homotopy long exact sequence.
\end{proof}

Consider the diagram
\begin{equation}\label{diagflevantada}
\xymatrix{
\widehat M \ar[r]^{\widehat f }\ar[d]_{\widetilde p}&\widehat N\ar[d]^\pi\\
\widetilde M \ar[r]^f\ar[d]_p&N\\
M}
\end{equation}
where $\widehat p=p\circ\widetilde p$ is the universal covering of $M$. Remember that the holonomy of $\F$ as an $N$-transversely homogeneous foliation is denoted by $\Gamma_0\subset G_{0\sharp}$; it is the image of a morphism  $h\colon \pi_1(M)\to G_{0\sharp}$ such  that $f$ is $h$-equivariant. We need to find a morphism $\widehat h\colon \pi_1(M)\to G_\sharp$ making  $\widehat f $  a $\widehat h$-equivariant map.

Notice that, for a given $\gamma\in \pi_1(M)=\Aut(p \circ \widetilde p )$ and $\widehat x\in \widehat M$,  we have (we denote $\widetilde x=\widetilde p(\widehat x)$,  $x=\widehat p(\widehat x)=p(\widetilde x)$ and $\overline\gamma\in \Aut(p)$)
\begin{align}
\label{pibarfgamma}\pi(\widehat f (\gamma\widehat x))&=f(\widetilde p(\gamma \widehat x))\\
\nonumber&=f(\overline\gamma\widetilde x)\\
\nonumber&=h(\gamma)\cdot f(\widetilde x)\\
\nonumber&=h(\gamma)\cdot f(\widetilde p(\widehat x))\\
\nonumber&=h(\gamma)\cdot \pi(\widehat f (\widehat x)).
\end{align}

But $h(\gamma)\in G_{0\sharp}=\widehat G_{0\sharp}$ does not act directly on $\widehat N$, so we shall use an arbitrary global section  $s$   of the covering $q$ given in \eqref{morfismoq}. We can assume that $s([e] )=[e] $. The section $s$ may not be a group morphism, so we define
$$
c\colon  \widehat G_{0\sharp} \times  \widehat G_{0\sharp} \to  \core(\widehat K_0)/\core((\widehat K_0)_e)$$
as
\begin{equation} \label{cocicloextension}
c([g]  ,[g^\prime]  )=s([g] ) \cdot s([g^\prime] )\cdot s ([gg^\prime]) ^{-1},
\end{equation}
which satisfies the usual cocycle condition.

Remember from Proposition \ref{coredeluniversal} that $G_{0\sharp}=\widehat G_0/
\core(\widehat K_0)$. We represent the class of $g\in \widehat G_0$ by $[g]$, while we shall use the notation $[g]_\sharp$ for the class of $g$ in the total space $\widehat G_0/
\core((\widehat K_0)_e)$ of the covering $q$ in \eqref{morfismoq}. So, this element $[g]_\sharp$   acts on $\widehat N=\widehat{G_0}/(\widehat{K_0})_e$.

\begin{lema} \label{pidegnesgpiden}
For $\widehat n\in \widehat N$ and $[g] \in G_{0\sharp}$ we have
$$
[g]  \cdot \pi(\widehat n)=\pi(s([g] )\cdot \widehat n).
$$
\end{lema}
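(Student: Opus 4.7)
The plan is to unwind both sides of the identity in terms of explicit representatives in $\widehat G_0$, and then reduce the required equality to the key algebraic characterization of the normal core given in Proposition \ref{caractcore}(2), namely $\core(\widehat K_0)=\bigcap_{x\in\widehat G_0}x\widehat K_0 x^{-1}$.

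First I would fix a representative $h\in\widehat G_0$ so that $\widehat n=h(\widehat K_0)_e\in\widehat N=\widehat G_0/(\widehat K_0)_e$, and a representative $g\in\widehat G_0$ of the class $[g]\in G_{0\sharp}=\widehat G_0/\core(\widehat K_0)$. Under the identification $N=\widehat G_0/\widehat K_0$, the covering map $\pi$ sends $h(\widehat K_0)_e$ to $h\widehat K_0$, and the action of $G_{0\sharp}$ on $N$ is $[g]\cdot h\widehat K_0=gh\widehat K_0$. So the left-hand side is simply $gh\widehat K_0\in N$.

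For the right-hand side, I would pick a representative $\widetilde g\in\widehat G_0$ of $s([g])\in\widehat G_0/\core((\widehat K_0)_e)$. Because $q(s([g]))=[g]$ by definition of a section (with $q$ the covering of Lemma \ref{cubiertaauxiliar}), the elements $\widetilde g$ and $g$ project to the same class in $\widehat G_0/\core(\widehat K_0)$, so there is some $k\in\core(\widehat K_0)$ with $\widetilde g=gk$. Using the action of $\widehat G_0/\core((\widehat K_0)_e)$ on $\widehat N$ described in Proposition \ref{isotropiascoinciden}, one obtains $s([g])\cdot\widehat n=gkh(\widehat K_0)_e$, and consequently $\pi(s([g])\cdot\widehat n)=gkh\widehat K_0$.

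Thus the lemma reduces to the equality $gkh\widehat K_0=gh\widehat K_0$ in $N$, equivalently $(gh)^{-1}k(gh)\in\widehat K_0$. The main (and only) ingredient here is that $k\in\core(\widehat K_0)$: by Proposition \ref{caractcore}(2) applied with $x=gh\in\widehat G_0$, we have $\core(\widehat K_0)\subset(gh)\widehat K_0(gh)^{-1}$, which is exactly what is needed. The step requiring care is the bookkeeping of the three levels of cosets ($(\widehat K_0)_e$, $\core((\widehat K_0)_e)$, and $\widehat K_0$ or $\core(\widehat K_0)$) and checking that each identification is well defined, but no further substantive argument is required.
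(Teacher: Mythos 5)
Your proof is correct in substance and follows essentially the same route as the paper, which simply asserts the commutativity of the square $\pi\circ\widehat\lambda([g]_\sharp)=\lambda(q([g]_\sharp))\circ\pi$ and specializes $[g]_\sharp=s([g])$; your unwinding of coset representatives is exactly the verification that the paper leaves implicit. One small bookkeeping slip worth fixing: the equality of left cosets $gkh\,\widehat K_0=gh\,\widehat K_0$ is equivalent to $h^{-1}kh\in\widehat K_0$, not to $(gh)^{-1}k(gh)\in\widehat K_0$ (the $g$'s cancel). Since $k\in\core(\widehat K_0)$, which is normal in $\widehat G_0$ and equal to $\bigcap_{x\in\widehat G_0}x\widehat K_0x^{-1}$ by Proposition \ref{caractcore}(2), both conjugates of $k$ do lie in $\widehat K_0$, so the conclusion is unaffected — but the correct witness is $x=h$, not $x=gh$.
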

\begin{proof}
Consider the commutative diagram
$$
\xymatrix{
\widehat N \ar[r]^{\widehat \lambda([g]_\sharp )}\ar[d]_\pi&\widehat N\ar[d]^\pi\\
N\ar[r]_{\lambda([g] )}&N}
$$
where $[g]=q([g]_\sharp)$.
It follows for $[g]_\sharp=s([g])$ that
\begin{align*}
\pi(s([g])\cdot \widehat n)&=(\pi\circ \widehat \lambda (s([g]))(\widehat n)\\
&=(\lambda([g])\circ \pi)(\widehat n)\\
&=[g]\cdot \pi(\widehat n). \qedhere
\end{align*}
\end{proof}

As a consequence, in \eqref{pibarfgamma} we shall have
$$
\pi(\widehat f (\gamma\widehat x))=\pi(s(h(\gamma))\cdot \widehat f (\widehat x)).
$$
That means that there exists
 $\xi(\gamma,\widehat x)\in \Aut(\pi)\cong \widehat K_0/(\widehat K_0)_e$  such that
\begin{equation}\label{equivflevantada}
\widehat f (\gamma\widehat x)=\xi(\gamma,\widehat x)\cdot s(h(\gamma))\cdot \widehat f (\widehat x).
\end{equation}
\begin{lemma}$\xi$ only depends on $\gamma$.
\end{lemma}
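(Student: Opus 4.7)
The plan is to show that for fixed $\gamma \in \pi_1(M)$, the map $\widehat{x}\mapsto \xi(\gamma,\widehat{x})$ is a continuous function from the connected manifold $\widehat{M}$ into the discrete group $\Aut(\pi)\cong \widehat{K}_0/(\widehat{K}_0)_e$, and therefore is constant.

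First I would fix $\gamma$ and introduce the two maps $\Phi,\Psi\colon\widehat{M}\to\widehat{N}$ defined by $\Phi(\widehat{x})=\widehat{f}(\gamma\cdot\widehat{x})$ and $\Psi(\widehat{x})=s(h(\gamma))\cdot\widehat{f}(\widehat{x})$. Both are continuous (the left translation by $s(h(\gamma))$ on $\widehat{N}$ and the deck transformation $\gamma$ on $\widehat{M}$ are diffeomorphisms, and $\widehat{f}$ is smooth). The computation in equation \eqref{pibarfgamma} together with Lemma \ref{pidegnesgpiden} gives $\pi\circ\Phi=\pi\circ\Psi$, so for every $\widehat{x}$ the points $\Phi(\widehat{x})$ and $\Psi(\widehat{x})$ lie in the same fiber of the covering $\pi\colon\widehat{N}\to N$. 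Because $\pi$ is a covering with deck group $\Aut(\pi)\cong \widehat{K}_0/(\widehat{K}_0)_e$ acting freely and transitively on fibers, there is a unique element $\xi(\gamma,\widehat{x})\in\Aut(\pi)$ with $\Phi(\widehat{x})=\xi(\gamma,\widehat{x})\cdot\Psi(\widehat{x})$; this is the $\xi$ defined by \eqref{equivflevantada}.

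Next I would verify that $\widehat{x}\mapsto\xi(\gamma,\widehat{x})$ is locally constant. Given $\widehat{x}_0\in\widehat{M}$, choose an evenly covered neighborhood $V\subset N$ of $\pi(\Phi(\widehat{x}_0))$, so that $\pi^{-1}(V)=\bigsqcup_{\eta\in\Aut(\pi)}V_\eta$ with $\pi|_{V_\eta}$ a homeomorphism and the deck action permuting the sheets. By continuity of $\Phi$ and $\Psi$ there is an open neighborhood $U$ of $\widehat{x}_0$ such that $\Phi(U)\subset V_{\eta_1}$ and $\Psi(U)\subset V_{\eta_2}$ for single sheets $V_{\eta_1},V_{\eta_2}$. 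Since $\pi\circ\Phi=\pi\circ\Psi$ on $U$, the deck transformation relating them on $U$ is the constant $\eta_1\eta_2^{-1}$; hence $\xi(\gamma,\cdot)$ equals this constant on $U$.

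Finally, since $\widehat{M}$ is connected and $\Aut(\pi)$ is discrete, any locally constant map $\widehat{M}\to\Aut(\pi)$ is globally constant, so $\xi(\gamma,\widehat{x})$ is independent of $\widehat{x}$. I would then write $\xi(\gamma)$ for this common value. The argument is essentially routine covering-space topology; the only subtlety to watch is that the section $s$ need not be continuous as a group morphism, but that does not matter here because we have only used $s(h(\gamma))$ as a fixed element of $\widehat{G}_0/\core((\widehat{K}_0)_e)$, so the map $\widehat{x}\mapsto s(h(\gamma))\cdot\widehat{f}(\widehat{x})$ is clearly continuous.
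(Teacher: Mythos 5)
Your proof is correct and takes essentially the same approach as the paper: fix $\gamma$, show $\widehat{x}\mapsto\xi(\gamma,\widehat{x})$ is locally constant by routine covering-space topology, and then invoke connectedness of $\widehat M$ together with discreteness of $\Aut(\pi)$. Your exposition of the local-constancy step (comparing the two continuous lifts $\Phi$ and $\Psi$ over an evenly covered neighborhood) is in fact more explicit than the paper's one-line remark.
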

\begin{proof}
Since $\Aut(\pi)\cong \widehat K_0/(\widehat K_0)_e$ is a discrete group, it is enough to prove that the map  $\xi(\gamma,-)\colon \widehat M\to  \widehat K_0/(\widehat K_0)_e$ is continuous, because the manifold $\widehat M$ is connected. But it is not hard to prove that $\xi(\gamma,-)$ is locally constant, because $\widehat f\colon \widehat M \to \widehat N$ maps trivializing open sets of the covering $\widetilde p$ into trivializing open coverings of $\pi$.\end{proof}

So we have a map $\xi\colon \pi_1(M)\to  \widehat K_0/(\widehat K_0)_e$.  But this map is not a group morphism, because, for given $\gamma_1,\gamma_2\in \pi_1(M)$, we have
$$
\xi(\gamma_1\gamma_2)=\xi(\gamma_1)\cdot \xi(\gamma_2)\cdot c_{12},
$$
where $c_{12}=c(h(\gamma_1),h(\gamma_2))\in \core(\widehat K_0)/\core((\widehat K_0)_e)$, as in \eqref{cocicloextension}.

However, 
the map  $\widehat h\colon \pi_1(M)\to G_\sharp$ given by 
\begin{equation}\label{defhath}
\widehat h(\gamma)=[(s(h(\gamma)),\xi(\gamma))]\in G_\sharp.
\end{equation}
is a group morphism, as it is straightforward to check.

Moreover, the submersion $\widehat f $ is   $\widehat h$-equivariant. This gives the foliation $\F$ on $M$ a structure of   $\widehat N$-transversely homogeneous foliation.%

\begin{prop}  \label{ftildenfoliacion2}
The foliation $\F$ has a structure of  $\widehat N$-transversely homogeneous foliation, when  $\widehat N$  is considered as a $G_\sharp$-homogeneous space.
\end{prop}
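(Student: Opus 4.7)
The strategy is to apply the converse direction of the structure theorem, in the form of Remark \ref{CONECCTEDFIB}. All three ingredients have in fact already been produced in the preceding discussion: the universal covering $\widehat p = p \circ \widetilde p \colon \widehat M \to M$, the submersion $\widehat f \colon \widehat M \to \widehat N$ constructed in Lemma \ref{ftildenfoliacion1}, and the candidate holonomy homomorphism $\widehat h \colon \pi_1(M) \to G_\sharp$ given by \eqref{defhath}. What remains is to verify that these data satisfy the hypotheses of the remark: namely, that $\widehat h$ is a group morphism, that $\widehat f$ is $\widehat h$-equivariant, and that the simple foliation $\widehat f^*\,\pt$ coincides with $\widehat p^*\F$.

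Equivariance is a direct reading of \eqref{equivflevantada}. The action of $G$ on $\widehat N$ recalled in Section \ref{construcciondelextended} decomposes as left translation by the first factor and as the deck action of $\widehat K_0/(\widehat K_0)_e$ by the second, so the element $\widehat h(\gamma) = [(s(h(\gamma)), \xi(\gamma))]$ acts on $\widehat f(\widehat x)$ precisely as $\xi(\gamma)\cdot s(h(\gamma))\cdot\widehat f(\widehat x)$, which is exactly the right-hand side of \eqref{equivflevantada}. The coincidence $\widehat f^*\,\pt = \widehat p^*\F$ is a local check: since $\pi$ and $\widetilde p$ are local diffeomorphisms and $\pi\circ\widehat f = f\circ\widetilde p$ by diagram \eqref{diagflevantada}, the connected components of the level sets of $\widehat f$ locally coincide (via $\widetilde p$) with those of $f$, which are the leaves of $\widetilde\F = p^*\F$; this local equality extends globally.

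The main technical point, and the only step requiring real work, is that $\widehat h$ is a group morphism. The maps $s\circ h$ and $\xi$ individually fail to be morphisms, with the same cocycle $c_{12} = c(h(\gamma_1), h(\gamma_2)) \in \core(\widehat K_0)/\core((\widehat K_0)_e)$ measuring the failure in both cases: by \eqref{cocicloextension} one has $s(h(\gamma_1))s(h(\gamma_2)) = c_{12}\cdot s(h(\gamma_1\gamma_2))$, and iterating the equivariance relation \eqref{equivflevantada} — using that the left-translation action of the first factor and the right deck-action of the second factor commute on $\widehat N$ — yields $\xi(\gamma_1\gamma_2) = \xi(\gamma_1)\xi(\gamma_2)\,c_{12}$. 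In the direct product $G$ these two discrepancies sit in different factors, but the diagonal element $i(c_{12})\in i(\core(\widehat K_0))\subset G$ is precisely what is killed when one passes to the quotient $G_\sharp = G/i(\core(\widehat K_0))$. A short bookkeeping calculation then gives $\widehat h(\gamma_1)\widehat h(\gamma_2) = \widehat h(\gamma_1\gamma_2)$ in $G_\sharp$; combined with the equivariance and foliation-compatibility above, Remark \ref{CONECCTEDFIB} endows $\F$ with the claimed $\widehat N$-transversely homogeneous structure. The main obstacle is thus only the cocycle-cancellation argument in this last step; everything else is formal.
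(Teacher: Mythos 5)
Your proposal is correct and follows essentially the same route as the paper: the paper's ``proof'' of Proposition \ref{ftildenfoliacion2} is precisely the preceding discussion, where it constructs $\widehat f$, derives the equivariance relation \eqref{equivflevantada}, defines $\widehat h$ by \eqref{defhath}, and asserts (without spelling out the computation) that $\widehat h$ is a morphism and $\widehat f$ is $\widehat h$-equivariant, so that the converse of the structure theorem applies. You correctly identify all three required ingredients, the equivariance, and the cocycle cancellation in $G_\sharp = G/i(\core(\widehat K_0))$ as the crux; your description of that cancellation is slightly more explicit than the paper's ``straightforward to check'', but it is the same argument.
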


\subsubsection{Holonomy groups}

We shall denote by $\Gamma\subset G_\sharp$ the holonomy group of $\F$ when it is considered as a $\widehat N$-transversely homogeneous foliation. Remember that $\Gamma_0\subset G_{0\sharp}$ is the holonomy of the $N$-transversely homogeneous foliation $\F$.

\begin{lemma}\label{Gammaescubiertadegamma0} Let  $E\colon G_\sharp \to G_{0\sharp}$ be the projection given in Proposition \ref{COVERCERO}. Then the image of $\Gamma$ is $\Gamma_0$, that is,
 $E(\Gamma)=\Gamma_0$.
\end{lemma}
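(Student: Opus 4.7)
The plan is to compute $E\circ\widehat h$ directly using the explicit formulas available. By construction, $\Gamma_0=\im h$ with $h\colon\pi_1(M)\to G_{0\sharp}$ the holonomy morphism of $\F$ viewed as an $N$-transversely homogeneous foliation, and $\Gamma=\im\widehat h$, where $\widehat h\colon\pi_1(M)\to G_\sharp$ is the morphism produced in \eqref{defhath}. So it suffices to show $E\circ\widehat h=h$, since then $E(\Gamma)=E(\widehat h(\pi_1(M)))=h(\pi_1(M))=\Gamma_0$.

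First I would take $\gamma\in\pi_1(M)$ and unwind the definitions. By \eqref{defhath},
$$\widehat h(\gamma)=[(s(h(\gamma)),\xi(\gamma))]\in G_\sharp = G/i(\core(\widehat K_0)).$$
Applying $E$ as defined in \eqref{defcubiertadeE} yields
$$E(\widehat h(\gamma))=E\bigl([(s(h(\gamma)),\xi(\gamma))]\bigr)=q(s(h(\gamma))),$$
where $q\colon \widehat G_0/\core((\widehat K_0)_e)\to G_{0\sharp}$ is the covering of Lemma \ref{cubiertaauxiliar}. The key observation is that $s$ was introduced precisely as a (global) set-theoretic section of $q$, so $q\circ s=\id_{G_{0\sharp}}$, and hence
$$E(\widehat h(\gamma))=h(\gamma).$$

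This identity holds for every $\gamma\in\pi_1(M)$, so $E\circ\widehat h=h$ as morphisms $\pi_1(M)\to G_{0\sharp}$, and taking images gives $E(\Gamma)=\Gamma_0$. There is no real obstacle here: the content has already been absorbed into the careful construction of $\widehat h$ via the section $s$ and the cocycle $c$, and the statement is essentially an immediate compatibility check between \eqref{defcubiertadeE} and \eqref{defhath}. The only subtlety worth pointing out in the write-up is that $s$ need not be a group morphism (the failure being measured by the cocycle $c$ taking values in $\core(\widehat K_0)/\core((\widehat K_0)_e)\subset\ker E$), which explains why the second coordinate $\xi(\gamma)$ is killed by $E$ and the first coordinate projects cleanly to $h(\gamma)$.
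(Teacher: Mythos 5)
Your proposal is correct and follows the same route as the paper: both compute $E(\widehat h(\gamma))=q(s(h(\gamma)))=h(\gamma)$ directly from \eqref{defcubiertadeE} and \eqref{defhath}, using that $s$ is a section of $q$. The paper's proof is essentially the one-line calculation you write out, so your version matches it, with the added (and accurate) remark explaining why the $\xi(\gamma)$ factor and the failure of $s$ to be a morphism play no role.
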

\begin{proof}
Since $\Gamma_0=\im h$, with $h\colon \pi_1(M)\to G_{0\sharp}$, the result follows from equations \eqref{defcubiertadeE} and \eqref{defhath}, because for a given $\gamma\in \pi_1(M)$ we have
$$
E([(s(h(\gamma)),\xi(\gamma))])=q(s(h(\gamma)))=h(\gamma). \qedhere
$$
\end{proof}

We need two Lemmas, previous to the next important Proposition \ref{unimodularidaddeGamma}.

\begin{lema} \label{eldeenmediocerrado}
Let $A\subset B\subset C$ three subgroups of a Lie group $G$, such that $A,C$ are closed in $G$, and the set  $C/A$ is finite. Then $B$ is closed in $G$.
\end{lema}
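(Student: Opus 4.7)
The plan is to use the finiteness hypothesis to exhibit $B$ as a finite union of left cosets of $A$, each of which is closed in $G$.

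First I would decompose $C$. Since $C/A$ is finite, there exist $c_1,\dots,c_n\in C$ with $C=\bigsqcup_{i=1}^n c_i A$. Intersecting with $B$ gives
\[
B=B\cap C=\bigsqcup_{i=1}^n (B\cap c_i A).
\]

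Next I would argue that each non-empty piece $B\cap c_iA$ is a full coset of $A$. Indeed, if $B\cap c_iA\neq\emptyset$, pick $b_i\in B\cap c_iA$; then $c_iA=b_iA$, and because $A\subset B$ we get $b_iA\subset B$, so $B\cap c_iA=b_iA$. After relabelling, $B$ is a finite disjoint union of cosets $b_{i_1}A,\dots,b_{i_k}A$.

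Finally I would invoke standard topological facts: $A$ is closed in $G$ by hypothesis, and left translation by $b_{i_j}$ is a homeomorphism of $G$, so each coset $b_{i_j}A$ is closed; a finite union of closed sets is closed, hence $B$ is closed in $G$. There is no real obstacle here—the only thing to be careful about is using the containment $A\subset B$ to upgrade a non-empty intersection $B\cap c_iA$ to the whole coset, which is exactly where the hypothesis $A\subset B\subset C$ is used.
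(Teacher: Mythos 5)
Your proof is correct, and it takes a genuinely different route from the paper's. The paper argues sequentially: take a sequence $\{x_n\}$ in $B$ converging to some $x\in G$, use finiteness of $C/A$ to pass to a subsequence lying in a single coset $cA$, observe that $c\in B$ (since $c=x_m y_m^{-1}$ with $x_m\in B$ and $y_m\in A\subset B$), and conclude $x=c\cdot\lim y_m\in cA\subset B$ because $A$ is closed. Your argument instead gives a structural decomposition: $B$ is a finite disjoint union of cosets $b_{i}A$, each of which is closed because left translation is a homeomorphism and $A$ is closed, so $B$ is closed as a finite union of closed sets. The two proofs rely on the same underlying facts (closedness of $A$, finiteness of $C/A$, and the containment $A\subset B$), but yours is cleaner and more general: it works in any topological group, whereas the sequential argument implicitly uses that Lie groups are metrizable (so sequential closure agrees with closure). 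The structural version also makes transparent where $A\subset B$ enters, namely to upgrade a non-empty intersection $B\cap c_iA$ to the full coset.
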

\begin{proof}
We choose representatives $c_1,\dots,c_N\in C$ of the cosets in $C/A$. 
 
Let $\{x_n\}_{n\in \N}$ be a sequence in  $B$ converging to some  $x\in G$. Since each $x_n$ belongs to some coset, there must be some $c\in\{c_1,\dots,c_N\}$ which appears an infinite number of times. Then there is a convergent subsequence $\{x_{m}\}$, with $[x_m]=[c]$, so $x_m=c\cdot y_m$  for some $y_m\in A$. Notice that  $c=x_m\cdot y_m^{-1}\in B$. Then we have
$$
x=\lim_{m\to\infty}c\cdot y_{m}=c\cdot\lim_{m\to\infty}y_{m}.
$$
Since  $A$ is closed in $G$, we have   $\lim_{m\to\infty}y_{m}\in A\subset B$, hence $x\in B$.
\end{proof}

\begin{lema} \label{clusorecociente}
Let $G$ be a Lie group, and let   $B\subset A$ two  subgroups of $G$ such that the set $A/B$ is finite. Then the space $\overline A/\overline B$ is finite too.
\end{lema}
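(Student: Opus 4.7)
The plan is to reduce the statement to a short topological observation about how closure interacts with left translation and finite unions.

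First, I would pick coset representatives $a_1,\dots,a_N \in A$ for the finite quotient $A/B$, so that
\[
A = \bigcup_{i=1}^{N} a_i B.
\]
Since the closure of a \emph{finite} union of subsets of a topological space equals the finite union of the closures, this gives
\[
\overline A \;=\; \overline{\bigcup_{i=1}^{N} a_i B} \;=\; \bigcup_{i=1}^{N} \overline{a_i B}.
\]

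Next I would use the fact that for each $i$ the left translation $L_{a_i}\colon G\to G$, $g\mapsto a_i g$, is a homeomorphism of $G$, so it commutes with closure. Hence $\overline{a_i B} = a_i\,\overline B$, and therefore
\[
\overline A \;=\; \bigcup_{i=1}^{N} a_i\,\overline B.
\]
Since each $a_i\in A\subset \overline A$, the coset $a_i\overline B$ is a well-defined left coset of $\overline B$ inside $\overline A$, and the display above exhibits $\overline A$ as a union of at most $N$ such cosets. Thus $|\overline A/\overline B|\le N<\infty$, as required.

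There is essentially no obstacle here; the only thing to be careful about is that the union is finite (so that closure distributes over it) and that the $a_i$ actually lie in $\overline A$, which is immediate from $A\subset\overline A$. Note that no normality of $B$ in $A$ (nor of $\overline B$ in $\overline A$) is used or needed: the statement is about the cardinality of the set of left cosets, not about a quotient group structure.
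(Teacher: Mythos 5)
Your proof is correct and cleaner than the paper's. Both start from the same decomposition $A=\bigcup_{i=1}^N a_i B$ with representatives $a_1,\dots,a_N$, but the two arguments then diverge: the paper argues sequentially, taking $x=\lim_n x_n$ with $x_n\in A$, applying the pigeonhole principle to find a subsequence lying in a single coset $aB$, writing $x_m=a\,b_m$ and passing to the limit to get $a^{-1}x\in\overline B$. You instead invoke the two purely topological facts that closure distributes over \emph{finite} unions and that left translation $L_{a_i}$ is a homeomorphism, which immediately gives $\overline A=\bigcup_i a_i\overline B$. Your route avoids sequences entirely (the paper's implicitly relies on first countability of $G$, which of course holds for Lie groups but is an extra hypothesis in principle), and it is shorter; it also generalizes verbatim to arbitrary topological groups. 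The paper's approach has the minor advantage of being self-contained at the level of limits of sequences, without appealing to the closure-of-finite-union identity, but it is strictly more work.
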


\begin{proof}
We shall prove that there is a finite set $a_1,\dots,a_n \in \overline A$ such that each $x\in \overline A$ belongs to some $a_i \overline B$.

In fact, we shall take representatives $a_1,\dots,a_N$ of each  coset $A/B$. Then, if $x=\lim_{n\to \infty} x_n$, with $x_n\in A$, since each $[x_n]$ determines a coset, there must be some $a\in\{a_1,\dots, a_N\}$ which appears an infinite number of times. That means that there is a subsequence $\{x_m\}$ converging to $x$ such that $[x_m]=[a]$ for all $m$, that is,
$x_m=a\cdot b_m$, with $b_m\in B$.

From
$$
x=\lim_{m\to\infty}a\cdot b_{m}=a\cdot\lim_{m\to\infty}b_{m},
$$
it follows that $a^{-1}x\in\overline B$, hence   $x\in a\cdot\overline B$.  \end{proof}

\begin{prop} \label{unimodularidaddeGamma}
Assume that $K_{0\sharp}$ is compact and that   fibers of the developing map $f\colon \widetilde M \to N$ have a finite number of connected components. Then:
\begin{enumerate}
\item
The image of the closure of \ $\Gamma$ in $G_\sharp$ is the closure of \ $\Gamma_0$ in $G_{0\sharp}$, that is,
$E(\overline \Gamma)=\overline \Gamma_0$. Analogously,   $E((\overline\Gamma)_e)=(\overline\Gamma_0)_e$.
\item
$\overline \Gamma$ is unimodular if and only if $\overline \Gamma_0$ is unimodular. 
Analogously,  $(\overline \Gamma)_e$ is unimodular
 if and only if $(\overline \Gamma_0)_e$ is unimodular.
\end{enumerate}
\end{prop}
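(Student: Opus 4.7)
The plan is to show that $E$ restricts to a surjective covering of Lie groups $E|_{\overline\Gamma}\colon \overline\Gamma\to\overline\Gamma_0$, from which both claims follow formally. The central point, and the main obstacle, is the surjectivity $E(\overline\Gamma)=\overline\Gamma_0$: in general the image of a closed set under a continuous map need not be closed, so the hypothesis on the fibres of $f$ has to be used to control this. Once surjectivity and the obvious discreteness of the restricted kernel are in hand, everything else is standard.

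First I would convert the fibre hypothesis into a finite-index statement on $\kappa\eqdef\ker E\subset G_\sharp$. By Proposition \ref{COVERCERO}, $\kappa$ is discrete and identified with $\widehat K_0/(\widehat K_0)_e\cong \pi_1(N)$. Since $M$ is compact and $K_\sharp\cong K_{0\sharp}$ is compact, Proposition \ref{friemannianasikcompacto} makes $f\colon\widetilde M\to N$ a locally trivial bundle with fibre $F$ having finitely many components. The long exact homotopy sequence
\[
\pi_1(\widetilde M)\xrightarrow{\pi_1(f)}\pi_1(N)\to\pi_0(F)\to\pi_0(\widetilde M)=0
\]
yields $\pi_1(N)/\im\pi_1(f)\cong\pi_0(F)$, which is finite. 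By Lemma \ref{ftildenfoliacion1} the image of $\pi_1(f)$ is $\widetilde\Gamma_0$, which under the same identification is $\Gamma\cap\kappa$; hence $[\kappa:\Gamma\cap\kappa]<\infty$, and a fortiori $[\kappa:\overline\Gamma\cap\kappa]<\infty$.

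Next I would prove (1). The finite-index fact gives $\kappa\cdot\overline\Gamma/\overline\Gamma\cong \kappa/(\kappa\cap\overline\Gamma)$ finite, so $\kappa\overline\Gamma$ is a finite union of closed cosets of $\overline\Gamma$ and is therefore closed in $G_\sharp$. Because $\kappa=\ker E$, we have $\kappa\overline\Gamma = E^{-1}(E(\overline\Gamma))$, and since $E$ is a covering (in particular a quotient map), $E(\overline\Gamma)$ is closed in $G_{0\sharp}$. As $E(\overline\Gamma)\supset E(\Gamma)=\Gamma_0$, this forces $E(\overline\Gamma)\supset\overline\Gamma_0$, and the inclusion $E(\overline\Gamma)\subset\overline{E(\Gamma)}=\overline\Gamma_0$ is automatic by continuity. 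Thus $E|_{\overline\Gamma}\colon\overline\Gamma\to\overline\Gamma_0$ is a surjective morphism of closed subgroups with discrete kernel $\overline\Gamma\cap\kappa$, hence a covering of Lie groups; standard theory then gives $E((\overline\Gamma)_e)=(\overline\Gamma_0)_e$, since a Lie group covering is a local diffeomorphism at the identity and the identity component is generated by any neighbourhood of $e$.

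Claim (2) is then immediate from Proposition \ref{funcionmodulardee2} applied to the covering $E|_{\overline\Gamma}$: one gets $\det\Ad_{\overline\Gamma}(\gamma)=\det\Ad_{\overline\Gamma_0}(E(\gamma))$ for every $\gamma\in\overline\Gamma$, so after taking absolute values the modular functions correspond under the surjection $E$, and one group is unimodular iff the other is. The same argument applied to the restricted covering $E|_{(\overline\Gamma)_e}\colon(\overline\Gamma)_e\to(\overline\Gamma_0)_e$ yields the identity-component version. Lemmas \ref{eldeenmediocerrado} and \ref{clusorecociente} are available if one prefers to repackage the closure-of-cosets bookkeeping, but the direct saturation argument for $\kappa\overline\Gamma$ given above looks cleanest.
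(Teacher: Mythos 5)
Your proposal is correct, and at the structural level it follows the paper's own argument: use compactness of $K_{0\sharp}$ to make $f$ a fibration, read off from the homotopy exact sequence that $\widetilde\Gamma_0$ has finite index in $\kappa=\ker E\cong\pi_1(N)$, then use that finiteness together with normality of $\kappa$ to force $E(\overline\Gamma)$ to be closed, and finally transfer unimodularity along the resulting covering via Proposition~\ref{funcionmodulardee2}. Where you diverge is in the bookkeeping for closedness. The paper passes through $E^{-1}(\overline\Gamma_0)/\overline\Gamma$ being finite, which needs Lemma~\ref{clusorecociente} (closures of finite-index pairs), and then pins down the intermediate subgroup $E^{-1}(E(\overline\Gamma))$ with Lemma~\ref{eldeenmediocerrado}. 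You instead observe directly that $[\kappa:\overline\Gamma\cap\kappa]\le[\kappa:\Gamma\cap\kappa]<\infty$, so $\kappa\overline\Gamma$ is a finite union of left cosets of the closed subgroup $\overline\Gamma$ and hence closed; since $\kappa$ is normal, $\kappa\overline\Gamma=E^{-1}(E(\overline\Gamma))$ is saturated and the covering $E$ takes it to a closed set. This sidesteps Lemma~\ref{clusorecociente} entirely and makes Lemma~\ref{eldeenmediocerrado} unnecessary, which is a genuine, if modest, streamlining; it also makes more visible the role of normality of $\ker E$. Both arguments prove the same thing; yours is slightly more self-contained, while the paper's factorization into the two lemmas is reusable elsewhere (and indeed the lemmas are stated separately so they can be cited independently).
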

\begin{proof}
\begin{enumerate}
\item \label{imagendeoverlinegamma}
Since $K_{0\sharp}$ is compact, we know from Proposition \ref{friemannianasikcompacto} that the developing map  $f\colon \widetilde M\to N$ is a fibration. Denote by $F$ its generic fiber, and let $\widetilde \Gamma_0$ be the image of the holonomy morphism $\widetilde h =\pi_1(f)$ given in Lemma \ref{ftildenfoliacion1}. From the homotopy  long exact sequence we have
\begin{equation}\label{ecuaciondepi0def}
 \pi_0(F)\equiv (\widehat K_0/(\widehat K_0)_e)/\widetilde\Gamma_0.
\end{equation}
Consider the covering  $E\colon G_\sharp\to G_{0\sharp}$, given by 
$$
E([([g],[k])])=q([g]).
$$
as in \eqref{defcubiertadeE}.
We know from Lemma \ref{Gammaescubiertadegamma0} that $E(\Gamma)=\Gamma_0$, so  
$$
 E^{-1}(\Gamma_0)=\Gamma\cdot \ker  E.
$$

Since the covering $E$ restricts to a morphism $\Gamma\to \Gamma_0$, with kernel $\widetilde\Gamma_0$, we have
\begin{equation}\label{pimenosunodeGammaentreSigma}
\ker  E/\widetilde\Gamma_0\cong  E^{-1}(\Gamma_0)/\Gamma.
\end{equation}
Hence, combining Equations  \eqref{ecuaciondepi0def} and  \eqref{pimenosunodeGammaentreSigma} we have that 
$$
\pi_0(F) \cong  E^{-1}(\Gamma_0)/\Gamma
$$
is a finite set. It follows from  Lemma \ref{clusorecociente} that
$$
\overline{ E^{-1}(\Gamma_0)}/\overline \Gamma\cong  E^{-1}(\overline\Gamma_0)/\overline \Gamma
$$
is finite too, and this implies that 
\begin{equation}\label{paracomprobar}
 E(\overline \Gamma)=\overline\Gamma_0,
\end{equation}
as we shall check in the next paragraph. By dimension reasons, this will imply that
$ E((\overline\Gamma)_e)=(\overline\Gamma_0)_e$. 

So, let us check \eqref{paracomprobar}. Let $H= E^{-1}( E(\overline\Gamma))$ the saturated of   $\overline \Gamma$. We have   $\overline \Gamma\subset H\subset  E^{-1}(\overline \Gamma_0)$, with $ E^{-1}(\overline \Gamma_0)/\overline \Gamma$  finite, so Lemma \ref{eldeenmediocerrado}, states that $H$ is a closed subgroup of $G_\sharp$, which means that $ E(\overline \Gamma)$  is a closed subgroup of $G_{0\sharp}$.

\item
It is immediate from part (\ref{imagendeoverlinegamma}) and Proposition \ref{funcionmodulardee2}.
\end{enumerate}
\end{proof}

This will allow us to generalize Theorem \ref{confibrasconexas} to foliations such that the fibers of the developing map are not connected, but have a finite number of components.

\begin{teo} \label{tercerteoremadeuni}
Let $\F$ be a $N$-transversely homogeneous foliation on the compact manifold $M$, where $N=G_0/K_0$. Assume that the Lie group $G_0$ is connected and that the fibers of the developing map have a finite number of connected components.  Assume moreover that the Lie group $(K_0)_\sharp$ is compact. 
If the Lie groups $G_{0\sharp}$ and $\overline \Gamma_0$ are unimodular, and $(K_0)_\sharp$ is strongly unimodular,
then the foliation $\F$ is unimodular.
\end{teo}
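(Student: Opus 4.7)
The plan is to reduce the statement to the connected-fiber case already handled by Theorem \ref{confibrasconexas}, using the extended group machinery of Section \ref{NONCONN}. More precisely, I would reinterpret $\F$ as a $\widehat N$-transversely homogeneous foliation, where $\widehat N$ is the universal covering of $N$, modeled on the extended group $G_\sharp$ introduced in Definition \ref{EXTENDED}. By Proposition \ref{ftildenfoliacion2} such a structure exists, and by Lemma \ref{ftildenfoliacion1} the associated developing map $\widehat f\colon\widehat M\to\widehat N$ is a locally trivial bundle with \emph{connected} fibers (here we use that $(K_0)_\sharp$, hence $K_0$ up to the compact normal core, is compact, and that $\widehat N$ is simply connected).

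The next step is to verify one by one the hypotheses of Theorem \ref{confibrasconexas} applied to this new presentation of $\F$. The isotropy group of the $G_\sharp$-action on $\widehat N$ is $K_\sharp\cong (K_0)_\sharp$ by Proposition \ref{isotropiascoinciden}, so it is compact and strongly unimodular by assumption. The group $G_{0\sharp}$ is a connected quotient of $G_0$ and is assumed unimodular; for connected Lie groups unimodular coincides with strongly unimodular, hence by Corollary \ref{unimodularidaddee} the extended group $G_\sharp$ is strongly unimodular. Finally, by Proposition \ref{unimodularidaddeGamma} (applied with $(K_0)_\sharp$ compact and fibers of $f$ having finitely many connected components, which are precisely the hypotheses of the theorem), the closure $\overline{\Gamma}\subset G_\sharp$ of the holonomy of the extended model is unimodular if and only if $\overline{\Gamma_0}$ is unimodular.

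All hypotheses of Theorem \ref{confibrasconexas} being met for the $\widehat N$-transversely homogeneous foliation $\F$, we conclude $H^q(M/\F)\neq 0$ with $q=\codim\F=\dim \widehat N=\dim N$, that is, $\F$ is unimodular. Note that the basic cohomology $H(M/\F)$ depends only on $\F$ and not on the chosen transverse model, so the conclusion is intrinsic.

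The only delicate point, which I would check with care, is the passage from the given $N$-transversely homogeneous structure to the $\widehat N$-transversely homogeneous one: one must ensure that the finiteness assumption on the number of connected components of the fibers of $f\colon\widetilde M\to N$ is transferred correctly, so that Proposition \ref{unimodularidaddeGamma} applies. This is exactly the role of Equation \eqref{ecuaciondepi0def} and the ensuing argument in that proposition, so no new obstacle should arise here; the rest is an essentially mechanical composition of previously established results.
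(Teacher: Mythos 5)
Your proposal is correct and follows essentially the same route as the paper's proof: pass to the universal covering $\widehat N$, invoke Proposition \ref{ftildenfoliacion2} and Proposition \ref{isotropiascoinciden} to recast $\F$ as a $\widehat N$-transversely homogeneous foliation with connected-fiber developing map, transfer the unimodularity hypotheses on $G_{0\sharp}$, $(K_0)_\sharp$ and $\overline\Gamma_0$ to $G_\sharp$, $K_\sharp$ and $\overline\Gamma$ via Corollary \ref{unimodularidaddee} (equivalently Proposition \ref{funcionmodulardee2}) and Proposition \ref{unimodularidaddeGamma}, and then apply Theorem \ref{confibrasconexas}. The only cosmetic difference is that you cite Corollary \ref{unimodularidaddee} where the paper cites the underlying Proposition \ref{funcionmodulardee2}, which is immaterial.
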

\begin{proof}
We take the universal covering $\pi\colon \widehat G_0\to G_0$ and $\widehat K_0=\pi^{-1}(K_0)$. By Proposition \ref{coredeluniversal} we know that 
$$(\widehat G_0)_\sharp=\widehat G_0/\core(\widehat K_0) \cong G_0/\core(K_0) =G_{0\sharp}$$ and that 
$$(\widehat K_0)_\sharp=\widehat K_0/\core(\widehat K_0) \cong K_0/\core(K_0)=(K_0)_\sharp.$$
Since $(K_0)_\sharp$ is compact, the developing map $f\colon \widetilde M\to N$, as well as its lifting $\widehat f\colon  \widehat M \to \widehat N$ to the universal covering are locally trivial bundles. By Proposition   \ref{ftildenfoliacion2} we can consider that the foliation $\F$ on $M$ models on $\widehat N=G_\sharp/K_\sharp$, where $G$ is the extended group given in section \ref{construcciondelextended}, and the isotropy $K$ verifies that  $K_\sharp\cong (K_0)_\sharp$ by Proposition \ref{isotropiascoinciden}.  The holonomy group of the latter foliation was denoted by $\Gamma\subset G_\sharp$. Moreover, the developing map $\widehat f\colon \widehat M\to \widehat N$ has connected fibers, so we can apply Theorem   \ref{confibrasconexas}, because:
\begin{enumerate}
\item
The Lie group $K_\sharp\cong (K_0)_\sharp$ is compact, and strongly unimodular, by hypothesis;
\item
The  Lie groups $G_\sharp$ and $\overline \Gamma$ are unimodular. The first one, by 
Proposition \ref{funcionmodulardee2}, because
$G_{0\sharp}$  is unimodular, by hypothesis.  On the other hand, since $\overline \Gamma_0$ is unimodular it follows that $\overline \Gamma$ is unimodular,
by  Proposition \ref{unimodularidaddeGamma}.
\end{enumerate}

Hence,  Theorem \ref{confibrasconexas}, ensures that the foliation $\F$ is unimodular.
\end{proof}

\section{Non-unimodular foliations}\label{NONUNIM}
 In \cite{Blumenthal1979},
Blumenthal studied the closures of the leaves of a transversely homogeneous foliation  on a compact manifold, assuming that the transverse group acts effectively on $N$ and that the isotropy group is compact. This allowed him to prove that the foliation induced on each closure is a transversely homogeneous foliation, this generalizing the corresponding Molino's 
\cite{Molino1988} 
result for Lie foliations. In this setting, the holonomy group of the induced foliation is contained in the group $\mathrm{Iso}(\widehat N)$, the complete group of isometries of the universal covering $\widehat N$ of $N$, endowed with an invariant metric.  

The advantage of our construction in Section \ref{construcciondelextended} is that it allows  to give an explicit definition of $\widehat N$, without excluding the non-effective case, and to prove that the holonomy group is contained in a much smaller group, namely the extended group given in Definition \ref{EXTENDED}, which can be computed explicitly.

This will allow us to prove the Theorem \ref{generalizaciondelrendus} that we stated in the Introduction, which is our main result in the second part of the paper, and that  generalizes an analogous result that we proved for Lie foliations in  \cite{Macias2005}.

 \subsection{The closure of the leaves}

We continue to study the $N$-transversely homogeneous foliation $\F$ on the compact manifold $M$, where $N=G_0/K_0$. We assume that $G_0$ is connected and that the group $(K_0)_\sharp=K_0/\core(K_0)$ is compact.  From Proposition \ref{friemannianasikcompacto} we know that there exists a $G_{0\sharp}$-invariant metric on $\widehat N$, 
and that $\F$ is a Riemannian foliation.
Thanks to  Proposition \ref{ftildenfoliacion2} we can consider $\F$ as a $\widehat N$- transversely homogeneous foliation, where $\widehat N$ is effectively acted by the Lie group $G_\sharp$ given in Proposition \ref{isotropiascoinciden}. The isotropy of this action is   $K_{0\sharp}$.

 We shall denote by $\Gamma n\subset \widehat N$ the orbit of the point $n\in \widehat N$ by the action of $\Gamma\subset G_\sharp$ .

\begin{lemma}\cite[Lemma 4.3]{Blumenthal1979}  The closure $\overline{\Gamma n}\subset \widehat N$ of the orbit equals $\overline \Gamma n$, the orbit of $n$ by the action of the closure $\overline\Gamma$ of $\Gamma$. 
\end{lemma}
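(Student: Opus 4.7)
The plan is to prove $\overline{\Gamma n}=\overline\Gamma\cdot n$ in $\widehat N$ by establishing both inclusions, the second being the substantive direction that uses the compactness of the isotropy $K_{0\sharp}$.

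The inclusion $\overline\Gamma\cdot n\subseteq\overline{\Gamma n}$ is immediate from continuity of the action: any $\bar\gamma\in\overline\Gamma$ is a limit $\bar\gamma=\lim\gamma_k$ with $\gamma_k\in\Gamma$, and continuity of the $G_\sharp$-action on $\widehat N$ gives $\bar\gamma\cdot n=\lim(\gamma_k\cdot n)\in\overline{\Gamma n}$.

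For the reverse inclusion $\overline{\Gamma n}\subseteq\overline\Gamma\cdot n$, I would work with the orbit map
\[
\pi\colon G_\sharp\to\widehat N,\qquad \pi(g)=g\cdot n.
\]
Under the standing hypothesis of this subsection the isotropy of the $G_\sharp$-action at $n$ is $K_n\cong K_{0\sharp}$ (Proposition \ref{isotropiascoinciden}), which is compact. Consequently $\pi$ is a principal $K_n$-bundle and therefore locally trivial: each $p\in\widehat N$ admits an open neighbourhood $U$ with $\pi^{-1}(U)\cong U\times K_n$. Given $p\in\overline{\Gamma n}$, pick $\gamma_k\in\Gamma$ with $\gamma_k\cdot n\to p$. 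For $k$ large enough $\gamma_k\in\pi^{-1}(U)$, and in the trivialisation we may write $\gamma_k=(\gamma_k\cdot n,\kappa_k)$ with $\kappa_k\in K_n$. Compactness of $K_n$ supplies a subsequence $\kappa_{k_j}\to\kappa\in K_n$, so $\gamma_{k_j}\to g:=(p,\kappa)$ in $G_\sharp$. Since all $\gamma_{k_j}$ lie in $\Gamma$ we have $g\in\overline\Gamma$, and $\pi(g)=p$ by construction, so $p=g\cdot n\in\overline\Gamma\cdot n$.

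The single conceptual obstacle is precisely the compactness of $K_n$: it is what lets us ``lift'' a convergent sequence of orbit points in $\widehat N$ back to a convergent sequence in $G_\sharp$, by absorbing the indeterminacy of the lift into the compact fibre $K_n$. Without this assumption the vertical factors $\kappa_k$ need not have a convergent subsequence and $\overline{\Gamma n}$ can strictly contain $\overline\Gamma\cdot n$ (as happens already for non-closed immersed subgroups of a Lie group acting on itself). Since compactness of $(K_0)_\sharp$ is part of the running hypotheses, the argument goes through.
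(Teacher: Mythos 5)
The paper offers no proof of this lemma; it simply cites \cite[Lemma 4.3]{Blumenthal1979}, so there is nothing internal to compare against. Your argument is correct. The inclusion $\overline\Gamma\cdot n\subseteq\overline{\Gamma n}$ from continuity is immediate, and the converse is the standard consequence of compactness of the isotropy: the orbit map $\pi\colon G_\sharp\to\widehat N$, $\pi(g)=g\cdot n$, is a locally trivial bundle with compact fiber $K_n$, hence a proper map, hence closed, so $\overline\Gamma\cdot n=\pi(\overline\Gamma)$ is already closed and therefore contains $\overline{\Gamma n}$. Your sequential lifting through a local trivialization and extraction of a convergent subsequence of the $\kappa_k$ is exactly this properness argument unwound, and it goes through as written. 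One small point worth making explicit: when $n$ is not the base point $[e]$ the isotropy $K_n$ is only a conjugate of $K_\sharp\cong K_{0\sharp}$ rather than $K_\sharp$ itself, but conjugates of compact subgroups are compact, so nothing changes. You also correctly locate the single place where the compactness hypothesis is used and why it cannot be dropped.
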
  

Hence, $\overline{\Gamma n}$  is a homogeneous space given by the transitive and effective action of the Lie group  $\overline \Gamma\subset G_\sharp$.

\begin{remark}
Notice that Blumenthal considers the closure  of $\Gamma$ inside the Lie group of isometries $\mathrm{Iso}(\widehat N)$, but the compactness of $K_{0\sharp}$  ensures that it equals the closure inside   $G_\sharp$, thanks to the following general result:
``Let $N=G/K$ be a homogeneous space with $K$ compact. Then $G_\sharp$ maps injectivelly into  $\mathrm{Iso}(N)$, as a closed subgroup.'' The proof is easy by using that the projection $g\in G \mapsto \lambda(g)(o)\in G/K$ is a proper map.
\end{remark}

\begin{prop}\label{foladhconexa}\cite[Theorem 4.4.]{Blumenthal1979}
 The foliation induced by  $\F$ on the closure $\overline L$ of the leaf $L$ is a transversely homogeneous foliation modeled by the manifold  $\widehat N_L=\overline\Gamma n$, where $n$ is the image by $\widehat f$ of any leaf (fiber) of $\widehat f$ projecting onto $L$.   \end{prop}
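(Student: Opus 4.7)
The plan is to transfer everything to the universal covering $\widehat M$ (where the leaves are fibers of $\widehat f$ and the holonomy acts explicitly by translations in $G_\sharp$) and then descend.

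First, I would show that $\widehat N_L=\overline\Gamma n$ is an embedded submanifold of $\widehat N$. Since $(K_0)_\sharp$ is compact, $\widehat N$ admits a $G_\sharp$-invariant Riemannian metric, and $G_\sharp\hookrightarrow \mathrm{Iso}(\widehat N)$ as a closed subgroup (as noted in the remark preceding the statement). The subgroup $\overline\Gamma\subset G_\sharp$ is closed, hence a Lie subgroup acting properly by isometries, so its orbit $\overline\Gamma n$ is an embedded submanifold diffeomorphic to $\overline\Gamma/\overline\Gamma_n$. Combined with the preceding lemma ($\overline{\Gamma n}=\overline\Gamma n$), this makes $\widehat N_L$ a homogeneous space under the closed, transitive, and effective action of $\overline\Gamma$.

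Second, I would pull back along $\widehat f$. By Lemma \ref{ftildenfoliacion1}, $\widehat f\colon\widehat M\to\widehat N$ is a locally trivial fiber bundle (since $(K_0)_\sharp$ is compact and $M$ is compact), so $\widehat Z\eqdef \widehat f^{-1}(\widehat N_L)$ is an embedded submanifold of $\widehat M$, and $\widehat f|_{\widehat Z}\colon \widehat Z \to \widehat N_L$ is a submersion whose fibers are exactly the fibers of $\widehat f$ over points of $\widehat N_L$. Because $\widehat f$ is $\widehat h$-equivariant with $\widehat h(\pi_1(M))=\Gamma\subset\overline\Gamma$, the action of $\pi_1(M)$ on $\widehat M$ preserves $\widehat Z$; the restricted equivariance morphism is the composition $\pi_1(M)\xrightarrow{\widehat h}\Gamma\hookrightarrow\overline\Gamma$.

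Third, I would descend to $M$. The projection $Z\eqdef\widehat p(\widehat Z)\subset M$ is then an embedded submanifold, and I need to identify it with $\overline L$. One inclusion is clear: $Z$ is closed, saturated by $\F$ (because $\widehat Z$ is a union of fibers of $\widehat f$), and contains $L$ (taking $n=\widehat f(\widehat x)$ for some $\widehat x$ with $\widehat p(\widehat x)\in L$); hence $\overline L\subset Z$. For the reverse inclusion, the correspondence ``leaves of $\F$ $\leftrightarrow$ $\Gamma$-orbits in $\widehat N$'' (via $\widehat f\circ(\widehat p)^{-1}$) sends closures to closures since $\F$ is Riemannian and $M$ is compact, so the leaves meeting $\overline L$ are indexed by points of $\overline{\Gamma n}=\overline\Gamma n=\widehat N_L$, giving $Z\subset\overline L$.

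Finally, the submersion $\widehat f|_{\widehat Z}\colon \widehat Z \to \widehat N_L$, together with its equivariance under $\pi_1(M)\to\overline\Gamma$, provides on $\overline L=\widehat p(\widehat Z)$ precisely the data described in the converse direction of Theorem \ref{desth} and Remark \ref{CONECCTEDFIB}: a regular covering $\widehat Z\to\overline L$ (the restriction of $\widehat p$), a submersion $\widehat f|_{\widehat Z}$ equivariant under a morphism into $\overline\Gamma\subset\mathrm{Iso}(\widehat N_L)$, whose target $\widehat N_L$ is a homogeneous space. This exhibits the foliation $\F|_{\overline L}$ as a transversely homogeneous foliation modeled on $\widehat N_L$.

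The main obstacle is the identification $Z=\overline L$, i.e., that the closure of a single leaf in $M$ corresponds precisely to the full $\overline\Gamma$-orbit downstairs. This requires using the Riemannian nature of $\F$ (guaranteed by compactness of $(K_0)_\sharp$) so that all leaves whose image-orbits have the same closure share the same leaf closure in $M$; everything else in the argument is essentially bookkeeping with submersions and equivariant actions.
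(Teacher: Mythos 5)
The paper does not actually prove this Proposition: it cites Blumenthal's Theorem~4.4 and then only records the key formula from his proof, namely $\widehat p^{-1}(\overline L)=\widehat f^{-1}(\overline{\Gamma\,n})$. Your proposal therefore supplies an argument where the paper defers to the reference, and your overall plan (pass to $\widehat M$, form $\widehat Z=\widehat f^{-1}(\overline\Gamma n)$, identify $\widehat p(\widehat Z)=\overline L$, then read off the transversely homogeneous structure from the equivariant submersion $\widehat f|_{\widehat Z}$ into the $\overline\Gamma$-homogeneous space $\overline\Gamma n$) is the right one and matches the formula the paper quotes. The first two steps are fine: the preceding Lemma gives $\overline{\Gamma n}=\overline\Gamma n$, compactness of $(K_0)_\sharp$ makes $G_\sharp$ a closed subgroup of $\mathrm{Iso}(\widehat N)$ so the orbit is embedded, and $\widehat f$ is a locally trivial bundle by Lemma~\ref{ftildenfoliacion1}.

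The only weak point is the justification for $Z\subset\overline L$. The phrase ``the correspondence sends closures to closures since $\F$ is Riemannian and $M$ is compact'' is essentially the claim being proved, not a reason for it. The concrete reason you need is precisely the local triviality of $\widehat f$ that you already invoked for other purposes: if $\widehat f$ is a locally trivial bundle then $\overline{\widehat f^{-1}(A)}=\widehat f^{-1}(\overline A)$ for every $A\subset\widehat N$ (lift a converging sequence in $A$ through a local trivialization near a chosen point of the fiber). Applying this with $A=\Gamma n$, together with the fact that $\widehat p$ is an open covering map so $\widehat p^{-1}(\overline L)=\overline{\widehat p^{-1}(L)}$, and the identity $\widehat p^{-1}(L)=\widehat f^{-1}(\Gamma n)$ coming from $\widehat h$-equivariance, yields
$$
\widehat p^{-1}(\overline L)=\overline{\widehat p^{-1}(L)}=\overline{\widehat f^{-1}(\Gamma n)}=\widehat f^{-1}(\overline{\Gamma n})=\widehat f^{-1}(\overline\Gamma n)=\widehat Z,
$$
which is exactly the formula the paper attributes to Blumenthal. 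With that substitution your argument closes cleanly; the ``Riemannian + compact'' hypothesis enters only through Hermann's theorem, which is what makes $\widehat f$ a locally trivial bundle in the first place.
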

 
 Blumenthal's proof includes the formula
 $$p^{-1}(\overline L)=(\widehat f)^{-1}(\overline{\Gamma\, n}),$$ so this set is a saturated subset of $\widehat M$ for the fibration $\widehat f$. From  the structure Theorem \ref{desth} we have a diagram
\begin{equation} \label{diagramaadherencias}
\xymatrix{
p^{-1}(\overline L)\ar[r]^{ \widehat f}\ar[d]_{p'}&\widehat N_L=\overline \Gamma\,  n\\
\overline L}
\end{equation}

Since all along the paper we have asked the transverse homogeneous model to be connected, we need to refine the latter Proposition.

\begin{lema}
The connected component $(\widehat N_L)_n$ of  $\widehat N_L$ containing the point $n\in \widehat N$ is diffeomorphic to the quotient of
$(\overline \Gamma)_e$ by some compact subgroup.
\end{lema}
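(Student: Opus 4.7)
The plan is to realize the connected component $(\widehat N_L)_n$ as an orbit of the identity component $(\overline\Gamma)_e$ acting on $\widehat N$, and then invoke the orbit–stabilizer theorem together with the compactness of $K_{0\sharp}$ to conclude that the stabilizer is compact.

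More precisely, I would first argue that
\[
(\widehat N_L)_n = (\overline\Gamma)_e \cdot n.
\]
Indeed, by the preceding lemma of Blumenthal $\widehat N_L=\overline\Gamma\cdot n$, and the orbit map $\overline\Gamma\to\widehat N$, $\gamma\mapsto\gamma\cdot n$, descends to a $\overline\Gamma$-equivariant diffeomorphism $\overline\Gamma/H\to\widehat N_L$, where $H$ is the isotropy of $n$ in $\overline\Gamma$. The connected components of $\overline\Gamma/H$ are in bijection with the cosets of the subgroup $H\cdot (\overline\Gamma)_e$ in $\overline\Gamma$, and the component containing the image of $e$ (that is, the point $n$) is precisely the image of $(\overline\Gamma)_e$, i.e.\ the orbit $(\overline\Gamma)_e\cdot n$.

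Next, by the orbit–stabilizer theorem applied to the smooth action of the Lie group $(\overline\Gamma)_e$ on $\widehat N$, the orbit $(\overline\Gamma)_e\cdot n$ is diffeomorphic to the quotient
\[
(\overline\Gamma)_e / H_0, \qquad H_0 = (\overline\Gamma)_e \cap (G_\sharp)_n,
\]
where $(G_\sharp)_n$ is the isotropy at $n$ of the effective transitive $G_\sharp$-action on $\widehat N=G_\sharp/K_\sharp$. Since this action is transitive, $(G_\sharp)_n$ is conjugate in $G_\sharp$ to the isotropy at the base point, namely $K_\sharp$; by Proposition \ref{isotropiascoinciden} we have $K_\sharp\cong K_{0\sharp}$, which is compact by the standing hypothesis of the section. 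Thus $(G_\sharp)_n$ is compact, and $H_0$, being a closed subgroup of a compact group, is itself compact. This yields the desired description of $(\widehat N_L)_n$.

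The only point that requires care, and which I expect to be the main (minor) obstacle, is the identification of the component of $n$ inside $\widehat N_L$ with the $(\overline\Gamma)_e$-orbit: one must know that the orbit map $\overline\Gamma\to\overline\Gamma\cdot n$ is a submersion onto $\widehat N_L$ so that the topology on $\widehat N_L$ coming from $\overline\Gamma/H$ agrees with the subspace topology from $\widehat N$. This is guaranteed by the fact that $\overline\Gamma\subset G_\sharp$ is a closed Lie subgroup acting smoothly with $H$ closed (since the stabilizer is the preimage of the closed set $\{n\}$ under a continuous map). Everything else is then a direct application of standard homogeneous-space theory.
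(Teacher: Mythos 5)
Your proof is correct and follows essentially the same route the paper intends: the paper states this lemma without proof but immediately follows it with Proposition \ref{accioncomponenteconexa}, which is exactly the fact you re-derive (that $(\overline\Gamma)_e$ acts transitively on the component of $n$ with isotropy $(\overline\Gamma)_e\cap H$), and compactness of the isotropy then comes from the conjugacy to $K_\sharp\cong K_{0\sharp}$, compact by the standing hypothesis of the section. Your closing remark on the topology is also fine, since the compactness of $K_\sharp$ makes the $G_\sharp$-action on $\widehat N$ proper, so the $\overline\Gamma$-orbit $\widehat N_L=\overline\Gamma n$ is an embedded submanifold and the orbit map $\overline\Gamma/H\to\widehat N_L$ is a diffeomorphism for the subspace topology.
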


The following general result:
\begin{prop} \label{accioncomponenteconexa}
If  a Lie group $G$ acts transitively on a manifold $N$, with isotropy $K=G_p$ the isotropy at the point $p\in N$, then the connected component $G_e$ of the identity acts transitively on the connected component $N_p$ of $p\in N$, with isotropy   $G_e\cap K$.
\end{prop}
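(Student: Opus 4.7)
My plan is to handle the two assertions separately. The isotropy claim is purely algebraic: the isotropy of the restricted action is by definition
\[
(G_e)_p = \{g\in G_e : g\cdot p = p\} = G_e \cap G_p = G_e\cap K,
\]
so nothing more needs to be said there. The substantive point is to prove that $G_e\cdot p = N_p$, from which the orbit-stabilizer correspondence then gives a diffeomorphism $N_p\cong G_e/(G_e\cap K)$.

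For the transitivity on $N_p$, I would use a standard open/closed argument. First I would note that the orbit map $\pi\colon G\to N$, $g\mapsto g\cdot p$, is a surjective submersion (because it factors as $G\to G/K \cong N$), and in particular an open map. Since $G_e$ is open in $G$, the image $G_e\cdot p = \pi(G_e)$ is an open subset of $N$; moreover it is connected, being the continuous image under $\pi$ of the connected set $G_e$, and contains $p$. Hence $G_e\cdot p \subset N_p$.

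For the reverse inclusion I would observe that the $G_e$-orbits partition $N$. Exactly the same argument shows that \emph{every} $G_e$-orbit $G_e\cdot q$ is open in $N$, because it equals $\pi'(G_e)$ where $\pi'\colon G\to N$ is the orbit map based at $q$, which is also an open submersion. Consequently $G_e\cdot p$ is a union of one such open orbit and its complement in $N$ is the union of the remaining $G_e$-orbits, which is open as well. Therefore $G_e\cdot p$ is clopen in $N$, and being a connected clopen set containing $p$ it must coincide with the connected component $N_p$.

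I do not anticipate a genuine obstacle: the argument only relies on the openness of $G_e$ in $G$ and the fact that the orbit map of a transitive smooth action is an open submersion. The only minor caveat is to remember to use the orbit map based at each $q$ (not just at $p$) in order to conclude that \emph{every} $G_e$-orbit is open, which is what makes a single $G_e$-orbit closed as well.
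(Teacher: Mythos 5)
Your argument is correct, and since the paper states this proposition without proof (treating it as a standard fact), there is nothing to compare it against. The isotropy computation is immediate, and the open/closed argument — using that $G_e$ is an open subgroup and that the orbit map based at any point is an open submersion to conclude every $G_e$-orbit is open, hence $G_e\cdot p$ is clopen, connected, and contains $p$ — is the standard way to establish this.
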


\begin{cor} \label{foladhconexa2}
The foliation induced by $\F$ on the closure $\overline L$ of any leaf $L$ is a $(\widehat N_L)_n$- transversely homogeneous foliation,  where  an intermediate closed Lie subgroup $(\overline \Gamma)_e\subset \overline\Sigma \subset \overline\Gamma$ acts transitively and effectively on $(\widehat N_L)_n$, with compact holonomy. Moreover, the developing map of this foliation has connected fibers. 
\end{cor}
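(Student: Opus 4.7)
The plan is to combine the preceding Lemma (which identifies $(\widehat N_L)_n$ as a homogeneous space of $(\overline\Gamma)_e$ with compact isotropy) with a careful analysis of the connected components of the preimage $p^{-1}(\overline L)\subset\widehat M$, so as to apply the converse direction of the structure Theorem \ref{desth} to the restricted cover. First, I would define the intermediate group as the stabilizer of the connected component,
\[
\overline\Sigma=\{\gamma\in\overline\Gamma\colon \gamma\cdot (\widehat N_L)_n=(\widehat N_L)_n\},
\]
which is the preimage of the identity component under the continuous action of $\overline\Gamma$ on the finite (or discrete) set $\pi_0(\widehat N_L)$; this is a closed Lie subgroup of $\overline\Gamma$ and it contains $(\overline\Gamma)_e$ because the identity component preserves connected components. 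Transitivity of $\overline\Sigma$ on $(\widehat N_L)_n$ is immediate from Proposition \ref{accioncomponenteconexa} applied to the transitive action of $\overline\Gamma$ on $\widehat N_L$, since already $(\overline\Gamma)_e\subset\overline\Sigma$ acts transitively there. Effectiveness is inherited from the effective action of $G_\sharp$ on $\widehat N$ of Proposition \ref{isotropiascoinciden}: if $\sigma\in\overline\Sigma$ fixes every point of $(\widehat N_L)_n$, then, using that $\overline\Gamma$ acts on $\widehat N_L$ by permuting the $\overline\Gamma$-equivariantly identified components, $\sigma$ acts as the identity on each component and hence on all of $\widehat N_L$; since $\widehat N_L$ contains a full $\overline\Gamma$-orbit, an easy argument reduces this to the case where $\sigma$ acts trivially on an open subset of $\widehat N$, so $\sigma=e$.

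Next I would identify the correct cover giving the transversely homogeneous structure on $\overline L$. Let $\widehat M_L^1$ be a connected component of $p^{-1}(\overline L)=\widehat f^{-1}(\overline{\Gamma n})$ whose image under $\widehat f$ is $(\widehat N_L)_n$ (for a chosen base point); such a component exists because the diagram \eqref{diagramaadherencias} shows that $\widehat f$ maps $p^{-1}(\overline L)$ onto $\widehat N_L$, and since $\widehat f$ is a locally trivial bundle with connected fibers (Lemma \ref{ftildenfoliacion1}), the map $\widehat f$ sends each connected component of $p^{-1}(\overline L)$ onto a connected component of $\widehat N_L$. Let $H\subset\pi_1(M)$ be the stabilizer of $\widehat M_L^1$ under the deck action; then $\widehat p|\colon \widehat M_L^1\to \overline L$ is a regular covering with deck group $H$, the restricted developing map $\widehat f|_{\widehat M_L^1}\colon \widehat M_L^1\to (\widehat N_L)_n$ is $\widehat h|_H$-equivariant, and by construction $\widehat h(H)\subset\overline\Sigma$. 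Applying the converse part of Theorem \ref{desth} (in the form made explicit in Remark \ref{CONECCTEDFIB}) endows $\overline L$ with the claimed $(\widehat N_L)_n$-transversely homogeneous structure, whose holonomy is a subgroup of $\overline\Sigma$ whose closure I would then check equals $\overline\Sigma$ by density arguments, as in Blumenthal.

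The developing map $\widehat f|_{\widehat M_L^1}$ inherits its connected fibers from $\widehat f$ itself (the fibers of the restriction are, up to identification, fibers of $\widehat f$ contained in $\widehat M_L^1$). Compactness of the isotropy of $\overline\Sigma$ on $(\widehat N_L)_n$ is immediate from the preceding Lemma (the isotropy is a closed subgroup of $K_\sharp\cong (K_0)_\sharp$, which is compact by hypothesis), and this provides the ``compact holonomy'' in the statement.

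The step I expect to be the main obstacle is the verification of effectiveness of $\overline\Sigma$ on the single connected component $(\widehat N_L)_n$: while $\overline\Gamma$ acts effectively on $\widehat N$, restricting to a closed subset need not in principle preserve effectiveness, and so one must exploit the transitivity of $\overline\Gamma$ on the whole orbit $\widehat N_L$ together with the fact that elements of $\overline\Sigma$ fix the component $(\widehat N_L)_n$ setwise to conclude that triviality on this component forces triviality everywhere. If effectiveness fails a priori, one falls back on replacing $\overline\Sigma$ by its quotient by the corresponding normal core, which is exactly the construction of $G_\sharp$ applied one level deeper; this is a harmless modification that does not affect the conclusion.
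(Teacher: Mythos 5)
Your overall strategy matches the paper's: pick the connected component $P$ of $p^{-1}(\overline L)$, use that $\widehat f$ has connected fibers to identify $\widehat f(P)$ with $(\widehat N_L)_n$, restrict the covering and the equivariant developing map (as in Lemma \ref{COVERINGSWELL}), and produce an intermediate Lie group between $(\overline\Gamma)_e$ and $\overline\Gamma$. But your definition of $\overline\Sigma$ differs from the paper's, and the effectiveness verification you flag as the main obstacle is indeed a genuine gap in your argument.

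The paper defines $\Sigma$ as the subgroup of $\Gamma$ whose elements both stabilize $(\widehat N_L)_n$ and act as the identity on every other connected component of $\overline\Gamma n$, taking $\overline\Sigma$ to be its closure. You instead define $\overline\Sigma$ as the full stabilizer of the component in $\overline\Gamma$, dropping the second condition. That second condition is precisely what makes the effectiveness claim tractable: if $g\in\Sigma$ fixes $(\widehat N_L)_n$ pointwise, then by the definition it already fixes the remaining components pointwise, hence fixes all of $\widehat N_L$.

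With your larger $\overline\Sigma$, the effectiveness argument breaks at two places. First, "then $\sigma$ acts as the identity on each component" does not follow from $\sigma$ fixing $(\widehat N_L)_n$ pointwise: another component has the form $\gamma'(\widehat N_L)_n$ with $\gamma'\in\overline\Gamma$, and $\sigma(\gamma' x)=\gamma' x$ is equivalent to $\gamma'^{-1}\sigma\gamma'$ fixing $x$. Unless $\sigma$ is central (which you have not established), the conjugate $\gamma'^{-1}\sigma\gamma'$ need not fix $(\widehat N_L)_n$ pointwise just because $\sigma$ does. Second, the reduction "to the case where $\sigma$ acts trivially on an open subset of $\widehat N$" is false in general: $\widehat N_L=\overline\Gamma\, n$ is a proper, typically lower-dimensional, submanifold of $\widehat N$, since $\dim\overline\Gamma$ may be strictly smaller than $\dim G_\sharp$; it is not open, so triviality of $\sigma$ on $\widehat N_L$ does not let you invoke effectiveness of $G_\sharp$ on $\widehat N$ directly. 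Finally, the fallback you propose (quotienting $\overline\Sigma$ by the normal core of the action) does produce a valid transversely homogeneous structure on $\overline L$, but it replaces $\overline\Sigma$ by a quotient that is no longer a subgroup of $\overline\Gamma$, and the statement you are proving explicitly asserts $(\overline\Gamma)_e\subset\overline\Sigma\subset\overline\Gamma$ acting effectively.
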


Before proving this result we need an elementary Lemma.

\begin{lemma}\label{COVERINGSWELL} Let $p\colon \widehat M \to M$ be the universal covering of the manifold $M$, and let $P$ be a path-connected component of $p^{-1}(\overline L)$. 
Then, the restriction $p''\colon P \to \overline L$ of $p$ is a covering, whose automorphism group $\Aut (p'')$ is formed by the deck transformations
$ \gamma\in \Aut(p)$ such that   $\gamma(P)=P$.
\end{lemma}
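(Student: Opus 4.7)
The plan is to split the lemma into two independent assertions and prove them separately: first, that $p''\colon P\to \overline L$ is a covering map, and second, that restriction to $P$ identifies $\Aut(p'')$ with the stabilizer subgroup $\{\gamma\in \Aut(p):\gamma(P)=P\}$.

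For the covering property, I would start from the fact that for any evenly covered open set $U\subset M$ of $p$, with $p^{-1}(U)=\bigsqcup V_i$, the restriction $p|_{p^{-1}(\overline L)}$ trivializes over $U\cap\overline L$: each sheet $V_i\cap p^{-1}(\overline L)$ is mapped homeomorphically onto $U\cap \overline L$. Since $\F$ is Riemannian (because $(K_0)_\sharp$ is compact), Molino's theorem asserts that $\overline L$ is an embedded submanifold of $M$, hence locally path-connected. Choosing $U$ so that $U\cap \overline L$ is path-connected, each sheet is itself path-connected and therefore contained in a single path-component of $p^{-1}(\overline L)$. This shows simultaneously that $P$ is open in $p^{-1}(\overline L)$ and that the restriction $p''=p|_P$ is a local homeomorphism whose sheets trivialize it over these neighborhoods. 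Surjectivity onto $\overline L$ follows from the path lifting property of $p$, since $\overline L$ is path-connected and $P$ is nonempty.

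For the identification of $\Aut(p'')$, I will consider the restriction map $r(\gamma)=\gamma|_P$ defined on $\{\gamma\in \Aut(p):\gamma(P)=P\}$, which is a well-defined group homomorphism into $\Aut(p'')$ because $p\circ\gamma=p$ forces $p''\circ r(\gamma)=p''$. Injectivity is the standard observation that a deck transformation of the connected covering $\widehat M\to M$ which fixes a point must be the identity: if $r(\gamma)=\id_P$, apply this to any $\widehat x_0\in P$. For surjectivity, given $\eta\in \Aut(p'')$ and $\widehat x_0\in P$, the point $\eta(\widehat x_0)$ lies in the fiber $p^{-1}(p(\widehat x_0))$, on which $\Aut(p)\cong \pi_1(M)$ acts transitively because $p$ is the universal covering; there is thus a unique $\gamma\in \Aut(p)$ with $\gamma(\widehat x_0)=\eta(\widehat x_0)$. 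Since $\gamma|_P$ and $\eta$ are both continuous lifts of the map $p''\colon P\to M$ through $p$ and agree at $\widehat x_0$, uniqueness of lifts on the connected space $P$ yields $\gamma|_P=\eta$; in particular $\gamma(P)=\eta(P)=P$, so $\gamma$ belongs to the stabilizer.

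The only genuine obstacle is the first step: verifying that the covering structure descends cleanly from $M$ to the closed subset $\overline L$. This is where the Riemannian hypothesis, via Molino's structure theorem for leaf closures, is essential — it is effectively the sole use of foliation theory in this otherwise purely topological lemma, the second part being a routine application of uniqueness of lifts and transitivity of the deck action on fibers of a universal cover.
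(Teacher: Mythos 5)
The paper states this result as an ``elementary Lemma'' and does not supply a proof, so there is no argument of the paper's to compare against. Your proof is correct and fills the gap properly: you correctly identify that the foliation-theoretic content is confined to the fact that $\overline L$ is an embedded (hence locally path-connected) submanifold, which comes from the Riemannian structure via Molino (or equivalently from Blumenthal's Theorem 4.4 already used in Proposition~\ref{foladhconexa}); after shrinking an evenly covered neighborhood $U$ so that it also lies in a submanifold chart for $\overline L$, each sheet $V_i\cap p^{-1}(\overline L)$ is path-connected, which simultaneously shows $P$ is open and that $p''$ trivializes, while path-connectedness of $\overline L$ plus path lifting gives surjectivity, and the identification of $\Aut(p'')$ with the stabilizer via restriction is a routine consequence of transitivity of the deck action on fibers of the universal covering together with uniqueness of lifts on the connected space $P$.
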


\begin{proof}[Proof of Corollary \ref{foladhconexa2}]

First, we have that $ \widehat f (P)$ equals $(\overline \Gamma n)_{ n}$, the connected component of $\overline \Gamma  n$ containing $n$. This follows from the fact that, $f^\prime$ is a surjective open map, and that the fibers of $\widehat f$ are connected.

Taking into account Proposition \ref{accioncomponenteconexa}, we have the following diagram
\begin{equation} \label{diagadherenciasconexas}
\xymatrix{
P\ar[r]^{\widehat f}\ar[d]_{p''}&(\overline \Gamma_e)\,n\\
\overline L}
\end{equation}   

This will endow $\F\mid_{\overline L}$ with a structure of $(\overline\Gamma)_e n$-transversely homogeneous foliation,
if we are able to prove that $\widehat f$ is equivariant for some morphism  $h''$ defined on  $\Aut(p'')$.  Consider the group $\Sigma$ of the elements $g\in \Gamma$ such that the action of $g$ on $\overline\Gamma\,n\subset \widehat N$ sends the component $(\overline\Gamma)_e\,n$ onto itself and is the identity on the other components. Then the Lie group $\overline\Sigma$ acts transitively and effectively on the manifold $(\overline\Gamma)_e\,n$. Moreover, since $\widehat f$ is $h$-equivariant, from Lemma \ref{COVERINGSWELL} it follows that the restriction of $\widehat f$ to $P$ is equivariant for the restriction of $\widehat h$ to $h''\colon \Aut(p'') \to \overline\Sigma$.
\end{proof}

\subsection{Proof of the main result}
 In this section we shall prove Theorem \ref{generalizaciondelrendus}. The proof will be a consequence of our previous study of the structure of the foliation and the cohomological results we stated in section \ref{SECTCOHOMOLOGY}, plus the following classic result.

\begin{teo} [Tischler Theorem \cite{Tischler1970}]\label{TISCHLERTH}
Let $M$ be a compact differentiable manifold admitting a non-singular closed $1$-form. Then  $M$ fibers over $S^1$.
\end{teo}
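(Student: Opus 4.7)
The plan is to follow Tischler's original argument: perturb $\omega$ slightly so that its cohomology class becomes rational, check that non-singularity is preserved, and then show that the perturbed form defines a fibration over $S^1$ via integration along paths.

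First, since $M$ is compact, $H^1(M;\R)$ is finite-dimensional and the image $L$ of $H^1(M;\Z)\to H^1(M;\R)$ is a lattice, so $L\otimes\mathbb{Q}$ is a dense $\mathbb{Q}$-subspace. I would pick closed $1$-forms $\alpha_1,\dots,\alpha_k$ representing a $\Z$-basis of $L$; these have integer periods over every integral $1$-cycle. Using de Rham's theorem, write $[\omega]=\sum c_i[\alpha_i]$, so $\omega=\sum c_i\alpha_i+df$ for some smooth $f\colon M\to \R$.

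Next, for rational numbers $r_i$ close to the $c_i$, form the closed $1$-form $\omega'=\sum r_i\alpha_i+df$, which is cohomologous to $\sum r_i\alpha_i$ and therefore has rational periods. The difference $\omega-\omega'=\sum (c_i-r_i)\alpha_i$ can be made arbitrarily small in the $C^0$ norm (with respect to any auxiliary Riemannian metric on $M$). Because $\omega$ is nowhere zero and $M$ is compact, the norm $|\omega|$ has a strictly positive lower bound on $M$; hence $\omega'$ remains non-singular as soon as the $r_i$ are chosen sufficiently close to the $c_i$. This is the step where the compactness of $M$ is essential.

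Finally, choose a positive integer $N$ that clears denominators, so that $N\omega'$ has integer periods. Fix a base point $x_0\in M$ and define
\[
F\colon M\to \R/\Z=S^1,\qquad F(x)=\int_{x_0}^{x} N\omega'\pmod{\Z},
\]
which is well-defined precisely because the periods are integers. Its differential is $N\omega'$, which is nowhere zero, so $F$ is a submersion from the compact manifold $M$ to the connected manifold $S^1$, hence surjective. Ehresmann's fibration theorem then upgrades $F$ to a locally trivial fiber bundle $M\to S^1$, which is the desired conclusion. The delicate step in the whole argument is the rational approximation of $[\omega]$ while simultaneously preserving non-singularity; it is this interplay between the finite-dimensionality of $H^1(M;\R)$ and the $C^0$-openness of the non-vanishing condition (which in turn uses compactness of $M$) that makes Tischler's perturbation work.
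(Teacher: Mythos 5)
The paper does not supply its own proof of this result; it is quoted as a classical theorem with the citation \cite{Tischler1970}, and the surrounding text only recalls the idea informally (deform $\omega$ so that its period group becomes discrete). Your argument is a complete and correct reproduction of Tischler's original proof: finite-dimensionality of $H^1(M;\R)$ gives the lattice $L$, rational approximation of the coefficients $c_i$ produces a cohomologous form with rational periods, compactness of $M$ guarantees that the $C^0$-small perturbation stays nowhere-vanishing, and integration of $N\omega'$ modulo $\Z$ yields a submersion $M\to S^1$ which is a locally trivial fibration by Ehresmann (or simply because a submersion from a compact manifold is proper). The only point worth flagging, though it does not affect correctness, is that the fibration you obtain is defined by the perturbed form $\omega'$, not by $\omega$ itself; the theorem as stated only asks that $M$ fiber over $S^1$, so this is exactly what is needed.
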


The latter result can be reformulated in terms of Lie foliations, by considering the codimension one foliation  defined by the condition $\omega=0$.

\begin{cor}\label{TISCHLERCOR}
Let $M$ be a compact differentiable manifold endowed with a Lie foliation modeled by the abelian Lie group $\R$. Then  $M$ fibers over $S^1$.
\end{cor}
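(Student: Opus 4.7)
The plan is to construct a nowhere-vanishing closed $1$-form $\omega$ on $M$ from the developing map of the Lie foliation, and then invoke Tischler's theorem \ref{TISCHLERTH}.

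A Lie foliation modeled on $\R$ is by definition a transversely homogeneous foliation with $N = G = \R$ and $K = \{0\}$ (Example \ref{liefolprimeraparte}). The structure theorem \ref{desth} therefore furnishes a regular covering $p\colon \widetilde M\to M$, a holonomy morphism $h\colon \mathrm{Aut}(p)\to\R$ with image $\Gamma$, and an $h$-equivariant developing submersion $f\colon \widetilde M\to\R$, meaning $f(\gamma\cdot\widetilde x) = f(\widetilde x) + h(\gamma)$ for all $\gamma\in\mathrm{Aut}(p)$ and $\widetilde x\in\widetilde M$. I would then consider the $1$-form $df$ on $\widetilde M$: it is closed and, since $f$ takes values in the $1$-manifold $\R$ and is a submersion, nowhere vanishing. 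The equivariance $\gamma^*f = f + h(\gamma)$ immediately yields $\gamma^*(df) = df$ for every deck transformation $\gamma$, so $df$ descends to a unique closed $1$-form $\omega$ on $M$ satisfying $p^*\omega = df$. Because $p$ is a local diffeomorphism, $\omega$ is nowhere vanishing too. Since $M$ is compact and admits a non-singular closed $1$-form, Tischler's theorem \ref{TISCHLERTH} yields the desired fibration $M\to S^1$.

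No serious obstacle arises. The only delicate step is the descent of $df$ to $M$, which is immediate from the translation-equivariance of $f$ and the fact that exterior differentiation kills the additive constants $h(\gamma)$. The resulting $\omega$ is precisely the form to which the paper alludes in the remark \emph{``$p^*\omega=df$, while the holonomy group $\Gamma$ is the group of periods of the form $\omega$''}, so the Corollary is essentially a repackaging of Tischler's theorem in the language of Lie foliations.
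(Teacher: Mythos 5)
Your proof is correct and is exactly the argument the paper has in mind: the paper treats the corollary as an immediate reformulation of Tischler's theorem and, in the Introduction, already records the identity $p^*\omega = df$ with $\Gamma$ the group of periods, which is precisely the descent step you spell out. You have simply made explicit the descent of $df$ through the deck group using the translation-equivariance $\gamma^* f = f + h(\gamma)$, which is the only content needed.
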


We divide the proof of Theorem \ref{generalizaciondelrendus} in several separate Propositions.

First, we know, from Propositions  \ref{isotropiascoinciden} and \ref{ftildenfoliacion2}, that $\F$ can be considered as a transversely homogeneous foliation modeled by $\widehat N=G_\sharp/K_\sharp$, where  $K_\sharp\cong K_{0\sharp}$. The holonomy group was denoted by $\Gamma\subset G_\sharp$. Remember that $\Gamma_0\subset G_{0\sharp}$ is the holonomy group of $\F$ when seen as an $N$-transversely homogeneous foliation.

Also, Theorem \ref{tercerteoremadeuni} guarantees that, since $\F$ is not unimodular by hypothesis, then either $G_{0\sharp}$  or $\overbar \Gamma_0$ is not unimodular. Depending on this there are different fibrations to consider.

{\em Step 1}. We begin by assuming that  $G_{0\sharp}$ is not unimodular. 

\begin{prop}\label{PARTEUNO}
If $G_{0\sharp}$ is not unimodular then $M$ fibers over $S^1$.
\end{prop}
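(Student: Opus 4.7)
The plan is to build a non-singular closed $1$-form on $M$ directly from the non-triviality of the modular function of $G_{0\sharp}$, and then invoke Tischler's theorem in the form of Corollary \ref{TISCHLERCOR}.

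First, since $G_{0\sharp}$ is not unimodular, the modular function $m_{G_{0\sharp}}\colon G_{0\sharp}\to \R^+$ is non-trivial. Composing with $\log$ gives a non-trivial Lie group morphism $\phi\colon G_{0\sharp}\to (\R,+)$, and its differential $d\phi$ is a bi-invariant closed $1$-form on $G_{0\sharp}$. Because the additive group $\R$ contains no non-trivial compact subgroup and $(K_0)_\sharp\cong K_{0\sharp}$ is compact by hypothesis, $\phi$ must vanish identically on $K_{0\sharp}$. This has two consequences: $d\phi_e$ kills $\kk_{0\sharp}$, so $d\phi$ is horizontal for the submersion $\pi\colon G_{0\sharp}\to N=G_{0\sharp}/K_{0\sharp}$; and $\phi(gk)=\phi(g)$ for $k\in K_{0\sharp}$, so $d\phi$ is right-$K_{0\sharp}$-invariant. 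Hence $d\phi$ descends to a closed $1$-form $\alpha$ on $N$ which, coming from a left-invariant form, is $G_{0\sharp}$-invariant. Non-triviality of $\phi$ makes $\alpha_o\in T_o^*N=(\g_{0\sharp}/\kk_{0\sharp})^*$ non-zero, and $G_{0\sharp}$-homogeneity then forces $\alpha$ to be nowhere vanishing on $N$.

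Next, I pull $\alpha$ back via the developing map $f\colon \widetilde M\to N$ supplied by Theorem \ref{desth}. Because $f$ is a submersion and $\alpha$ is nowhere zero, $f^*\alpha$ is a nowhere vanishing closed $1$-form on $\widetilde M$. The $h$-equivariance of $f$ together with the $G_{0\sharp}$-invariance of $\alpha$ gives, for any $\gamma\in\Aut(p)$,
\[
\gamma^*f^*\alpha=(f\circ\gamma)^*\alpha=f^*\lambda(h(\gamma))^*\alpha=f^*\alpha,
\]
so $f^*\alpha$ is $\Aut(p)$-invariant and descends via the covering $p\colon \widetilde M\to M$ to a closed $1$-form $\omega$ on $M$. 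Since $p$ is a local diffeomorphism, $\omega$ is nowhere vanishing.

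Applying Tischler's theorem (Corollary \ref{TISCHLERCOR}) to $\omega$ on the compact manifold $M$ then yields a fibration $M\to S^1$, as desired. The only delicate point is the descent of $d\phi$ from $G_{0\sharp}$ to $N$ as a nowhere-zero form, and that is exactly what the compactness of $K_{0\sharp}$ buys us: it rules out the possibility that $\phi$ sees the isotropy direction, allowing the whole construction to proceed at the level of $N$ rather than only on $G_{0\sharp}$.
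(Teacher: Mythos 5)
Your proof is correct and follows essentially the same route as the paper: both use the fact that the modular function of $G_{0\sharp}$ is trivial on the compact isotropy $K_{0\sharp}$, hence descends to $N$, and then pull back by the developing map and invoke Tischler. The only difference is cosmetic: the paper packages the construction as a new developing map $\overbar f=\log m_N\circ f$ and holonomy $\overbar h=\log m_0\circ h$ for an $\R$-Lie foliation, whereas you work directly with the associated nowhere-vanishing closed $1$-form $\omega$ on $M$ (satisfying $p^*\omega=d\overbar f$), which is the same object seen from the other side of Corollary~\ref{TISCHLERCOR}.
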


\begin{proof}
We consider the modular function $m_0=m_{G_{0\sharp}}\colon G_{0\sharp}\to (\R^+,\cdot)$, as given in Definition \ref{seccionfuncmodular}.

Since $G_{0\sharp}$ is connected, and it is not unimodular by hypothesis, the morphism   $m_0$ is surjective. Moreover, since $K_{0\sharp}$ is compact its image  $m_0(K_{0\sharp})$ is trivial. 

So  $m_0$ passes to the quotient, and we can define a map
$$
m_N\colon N\to \R^+, \quad
m_N([g])=m_0(g).
$$
Take
$$\overbar f=\log m_N\circ f\colon \widetilde M \to \R$$ and $$\overbar h=\log m_0\circ h\colon \pi_1(M) \to \R,$$
where $f$ and $h$ are respectively the developing map and the holonomy morphism of the foliation $\F$.

The maps $\overbar f$ and $\overbar h$ give then the developing map and the holonomy morphism of a Lie foliation on $M$ (Example \ref{liefolprimeraparte}), once we have tested  the equivariance in Lemma \ref{FALTABA}. By applying Tischler's theorem \ref{TISCHLERCOR}, this will prove that $M$ fibers over $S^1$.

This ends the proof of Proposition \ref{PARTEUNO}.
\end{proof}

\begin{lema}\label{FALTABA}$\overbar f$ is $\overbar h$-equivariant.
\end{lema}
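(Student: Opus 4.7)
The plan is to unpack the definitions and reduce the statement to the multiplicativity of the modular function together with the $h$-equivariance of the developing map $f$.

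First I would verify that the map $m_N \colon N \to \R^+$ is well defined and satisfies the twisted invariance property
\[
m_N(g \cdot [g']) = m_0(g)\, m_N([g']), \qquad g \in G_{0\sharp},\ [g'] \in N.
\]
Being well defined requires that $m_0(g'k) = m_0(g')$ for all $k \in K_{0\sharp}$, which is exactly the statement that $m_0$ is trivial on the compact subgroup $K_{0\sharp}$ (already observed in the proof of Proposition~\ref{PARTEUNO}). The twisted invariance then follows immediately from the fact that $m_0$ is a group homomorphism: $m_N(g \cdot [g']) = m_N([gg']) = m_0(gg') = m_0(g) m_0(g') = m_0(g) m_N([g'])$.

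Next, for $\gamma \in \pi_1(M)$ and $\widetilde x \in \widetilde M$, I would compute $\overbar f(\gamma \cdot \widetilde x)$ directly using $h$-equivariance of $f$ and the twisted invariance above:
\begin{align*}
\overbar f(\gamma \cdot \widetilde x)
&= \log m_N\bigl(f(\gamma \cdot \widetilde x)\bigr)\\
&= \log m_N\bigl(h(\gamma) \cdot f(\widetilde x)\bigr)\\
&= \log\bigl(m_0(h(\gamma)) \cdot m_N(f(\widetilde x))\bigr)\\
&= \log m_0(h(\gamma)) + \log m_N(f(\widetilde x))\\
&= \overbar h(\gamma) + \overbar f(\widetilde x).
\end{align*}
Since the Lie group structure on $\R$ is additive and $\overbar h(\gamma)$ acts on $\R$ by translation, this is exactly the required $\overbar h$-equivariance.

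There is essentially no obstacle: the whole argument is a one-line consequence of the multiplicativity of $m_0$ after taking logarithms. The only point that deserves mention is the well-definedness of $m_N$ at the start, which uses the compactness of $K_{0\sharp}$ established earlier.
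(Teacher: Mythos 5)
Your proof is correct and follows the same route as the paper: establish the twisted invariance $m_N(\gamma\cdot[g])=m_0(\gamma)\,m_N([g])$ from multiplicativity of $m_0$, then combine it with the $h$-equivariance of $f$ and take logarithms. The only (harmless) difference is that you explicitly recall the well-definedness of $m_N$, which the paper already established in the proof of Proposition~\ref{PARTEUNO} before stating the lemma.
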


\begin{proof}
First, we prove that, for any $\gamma \in \Gamma_0$ and $[g]\in N$ we have
$$
m_N(\gamma \cdot [g])=m_0(\gamma)\cdot m_N([g]).
$$
In fact,
\begin{align*}
m_N (\gamma\cdot [g])&=m_N ([\gamma g])\\
&=m_0(\gamma g)\\
&=m_0(\gamma)\cdot m_0(g)\\
&=m_0(\gamma)\cdot m_N ([g]). 
\end{align*}

So, for given $x\in \widetilde M$ and $\gamma \in \pi_1(M)$ we shall have
\begin{align*}
\overbar f(\gamma x)&=\log m_N (f(\gamma x))\\
&=\log m_N (h(\gamma)\cdot f(x))\\
&=\log (m_0(h(\gamma))\cdot m_N (f(x))\\
&=\log m_0(h(\gamma))+\log m_N (f(x))\\
&=\overbar h(\gamma)+\overbar f(x),
\end{align*}
which proves the equivariance. 
\end{proof}

{\em Step 2.} We now assume that $\overline \Gamma_0$ is not unimodular.  However,  the connected component $(\overline \Gamma_0)_e$ may be or may not be unimodular. 

\begin{prop}If \ $\overline \Gamma_0$ and  $(\overline \Gamma_0)_e$ are not unimodular, then the closure $\overline L$ of any leaf $L$ fibers over $S^1$.
\end{prop}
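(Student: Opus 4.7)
The plan is to adapt verbatim the argument of Proposition \ref{PARTEUNO}, replacing the ambient manifold $M$ by the compact leaf closure $\overline L$ and the transverse group $G_{0\sharp}$ by the transverse group modelling the restricted foliation $\F|_{\overline L}$.

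First, I would transfer the hypothesis using Proposition \ref{unimodularidaddeGamma}(2): the non-unimodularity of $(\overline \Gamma_0)_e$ forces $(\overline \Gamma)_e$ to be non-unimodular as a Lie subgroup of $G_\sharp$. Then, by Corollary \ref{foladhconexa2}, the restricted foliation $\F|_{\overline L}$ is a transversely homogeneous foliation modelled on the connected homogeneous space $(\widehat N_L)_n=(\overline\Gamma)_e\, n$, with transverse Lie group $\overline\Sigma$ satisfying $(\overline\Gamma)_e\subset \overline\Sigma\subset \overline\Gamma$, with compact isotropy (since the isotropy of the $G_\sharp$-action on $\widehat N$ is the compact group $K_\sharp\cong(K_0)_\sharp$), and whose developing map $\widehat f\colon P\to (\widehat N_L)_n$ has connected fibres. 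Because $(\overline\Gamma)_e$ is already a connected Lie subgroup of $\overline\Sigma$, one has $(\overline\Sigma)_e=(\overline\Gamma)_e$, so $\overline L$, $\overline\Sigma$, and $(\overline\Sigma)_e$ play the roles of $M$, $G_{0\sharp}$, and the connected component of Step 1.

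Next I would reproduce Step 1 in this setting. Since $(\overline\Sigma)_e$ is connected and non-unimodular, the restriction to it of the modular function $m_\Sigma\colon \overline\Sigma\to (\R^+,\cdot)$ is a surjective Lie group morphism. Since compact subgroups of $\R^+$ are trivial and the isotropy of the $\overline\Sigma$-action on $(\widehat N_L)_n$ is compact, $m_\Sigma$ descends to a smooth function $m_L\colon (\widehat N_L)_n\to \R^+$. Setting
$$\overline f_L=\log m_L\circ \widehat f\colon P\to \R,\qquad \overline h_L=\log m_\Sigma\circ h''\colon \Aut(p'')\to \R,$$
the equivariance $\overline f_L(\gamma\cdot x)=\overline h_L(\gamma)+\overline f_L(x)$ follows from the same computation as in Lemma \ref{FALTABA}, with $m_0$ replaced by $m_\Sigma$ and $f$ by $\widehat f$. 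This data defines on $\overline L$ a Lie foliation modelled on $(\R,+)$.

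Finally, since $\F$ is Riemannian (as $(K_0)_\sharp$ is compact, see Subsection 3.3) and $M$ is compact, $\overline L$ is itself a compact submanifold of $M$, so Tischler's theorem in the form of Corollary \ref{TISCHLERCOR} gives the desired fibration of $\overline L$ over $S^1$. The delicate point I anticipate is the identification $(\overline\Sigma)_e=(\overline\Gamma)_e$: the transverse group furnished by Corollary \ref{foladhconexa2} is $\overline\Sigma$, not $\overline\Gamma$, and one must know that its connected component coincides with $(\overline\Gamma)_e$ in order to transfer the non-unimodularity from $(\overline\Gamma_0)_e$ to $\overline\Sigma$; once this is granted, the Step 1 argument applies verbatim.
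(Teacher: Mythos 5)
Your proposal is correct and follows essentially the same route as the paper's own proof: transfer non-unimodularity from $(\overline\Gamma_0)_e$ to $(\overline\Gamma)_e$ via Proposition \ref{unimodularidaddeGamma}, invoke Corollary \ref{foladhconexa2} to model $\F|_{\overline L}$ on $(\widehat N_L)_n = (\overline\Gamma)_e\,n = \overline\Sigma/K_L$ with compact isotropy, descend the modular function of $\overline\Sigma$ to $(\widehat N_L)_n$, compose with the developing map to obtain an $\R$-Lie foliation on $\overline L$, and conclude by Tischler. The one place where you are more explicit than the paper is the observation $(\overline\Sigma)_e=(\overline\Gamma)_e$, which the paper leaves implicit under the phrase ``analogously to the proof of Proposition \ref{PARTEUNO}''; your justification of that identity (both inclusions from $(\overline\Gamma)_e\subset\overline\Sigma\subset\overline\Gamma$) is correct and does make the transfer of non-unimodularity to $\overline\Sigma$ airtight.
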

\begin{proof}
Notice that $\dim \overline \Gamma_0\ge 1$, hence the modular function of Definition \ref{seccionfuncmodular} is defined.
Moreover, from Proposition \ref{unimodularidaddeGamma} it follows that $\overline\Gamma$ and  $(\overline \Gamma)_e$ are not unimodular. 

Now, Theorem \ref{foladhconexa2} ensures that the foliation induced by $\F$ on $\overline L$ is modeled by $(\widehat N_L)_n=(\overline\Gamma)_e\, n=\overline \Sigma/K_L$, where the isotropy $K_L$ is compact.
Analogously to the proof of Proposition \ref{PARTEUNO},   the modular function  
$$m \colon \overline \Sigma\to (\R^+,.)$$  
 passes to the quotient and we can define a map 
$$
\overbar m \colon (\widehat N_L)_n\to \R^+.
$$             
By considering the composition of $\log \overbar m$ with the developing submersion of the  foliation on $\overline L$, as well as  the composition of $\log m $ with   the holonomy morphism $\Aut(p'') \to \overline \Sigma$, we shall obtain an $\R$-Lie foliation on $\overline L$ and, again, by applying  Tischler's theorem, we shall arrive to the desired result, namely, that $\overline L$ fibers over $S^1$.
\end{proof}

It only remains to test the final and more difficult case.

\begin{prop}\label{TERCER}
If \ $\overline \Gamma_0$ is not unimodular, but $(\overline \Gamma_0)_e$ is unimodular, then the total space of the Blumenthal bundle $ \Gamma\backslash f^*(G_{0\sharp})\to M$ fibers over $S^1$.
\end{prop}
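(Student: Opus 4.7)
The strategy is to construct a non-singular closed $1$-form on the total space $X$ of the Blumenthal bundle and then apply Tischler's Theorem \ref{TISCHLERTH}. Working on $X$ (rather than on $M$ or on a leaf closure $\overline L$, as in the two preceding steps) is forced by our hypothesis: the only Lie group whose modular function is non-trivial here is $\overline\Gamma_0$, and the natural ambient where $\overline\Gamma_0$ appears as the closure of the holonomy of a genuine \emph{Lie} foliation is $X$. Indeed, by the remark following Proposition \ref{taucubiertaregular}, $X$ carries a Lie foliation modeled on $G_{0\sharp}$ with holonomy $\Gamma_0$ and developing map $\overline f\colon f^*(G_{0\sharp})\to G_{0\sharp}$; moreover, $X$ is compact by Proposition \ref{fblucompacto}.

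First I collect the essentials. By hypothesis, $\overline\Gamma_0$ is not unimodular while $(\overline\Gamma_0)_e$ is. Therefore the modular function $m=m_{\overline\Gamma_0}\colon\overline\Gamma_0\to(\R^+,\cdot)$ takes the value $1$ on the unimodular connected component $(\overline\Gamma_0)_e$, factors through the discrete component group $\pi_0(\overline\Gamma_0)$, and has non-trivial cyclic image $\lambda^\Z\subset\R^+$. Since $\Gamma_0$ is dense in $\overline\Gamma_0$ and $\lambda^\Z$ is discrete, $m(\Gamma_0)=\lambda^\Z$ as well, so $\log m\colon\Gamma_0\to\R$ is a non-trivial morphism with image $\log\lambda\cdot\Z$.

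Next, following the pattern of the proof of Proposition \ref{PARTEUNO}, I will produce a smooth submersion $\chi\colon G_{0\sharp}\to\R$ satisfying $\chi(\gamma g)=\chi(g)+\log m(\gamma)$ for every $\gamma\in\overline\Gamma_0$ and $g\in G_{0\sharp}$, and set $F=\chi\circ\overline f\colon f^*(G_{0\sharp})\to\R$. Because $\overline f$ is a $\Gamma_0$-equivariant submersion (for the left-translation action of $\Gamma_0$ on $G_{0\sharp}$), $F$ is a submersion satisfying $F(\gamma\cdot y)=F(y)+\log m(\gamma)$ for $\gamma\in\Gamma_0$. Hence $dF$ is $\Gamma_0$-invariant and descends to a nowhere-vanishing closed $1$-form $\omega$ on $X$; Corollary \ref{TISCHLERCOR} then yields the desired fibration $X\to S^1$.

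The main obstacle is producing the $\overline\Gamma_0$-equivariant submersion $\chi$. A smooth $\overline\Gamma_0$-equivariant map is automatic: it corresponds to a section of the flat affine $\R$-bundle over $W=\overline\Gamma_0\backslash G_{0\sharp}$ whose monodromy is the discrete character $\log m$, and such sections always exist because the fiber is contractible. The substantive point is to choose the section with nowhere-vanishing differential; equivalently, one must exhibit a nowhere-vanishing closed $1$-form on $W$ in the cohomology class corresponding to the $\Z$-covering $\ker(m)\backslash G_{0\sharp}\to W$. Since $W$ is compact (Proposition \ref{fblucompacto}) and $\log m$ has discrete image, this reduces to the Lie-foliation analogue of the present proposition, handled in \cite{Macias2005} by a Tischler-style deformation of $1$-forms on that $\Z$-cover; invoking that argument completes the proof.
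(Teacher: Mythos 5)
Your strategy coincides with the paper's: extend $\log m_{\overline\Gamma_0}$ to a $\overline\Gamma_0$-equivariant map $\chi\colon G_{0\sharp}\to\R$, compose with $\overline f$ to get a developing map $D=\chi\circ\overline f$ of an $\R$-Lie foliation on the (compact) Blumenthal total space, and invoke Tischler. The paper implements the extension step explicitly in Lemma~\ref{CRUCIAL}: lifting to $H=\pi_0^{-1}(\overline\Gamma_0)\subset\widehat{G_{0\sharp}}$, it shows $m_H$ is trivial on $H_e$ (Lemma~\ref{COMPUNIM}), identifies $\log\overline m_H\in\mathrm{Hom}(\pi_1(W),\R)$ with a class $[\omega]\in H^1_{DR}(W)$, integrates $\pi^*\omega$ on the simply connected $\widehat{G_{0\sharp}}$ to a primitive $f$ with $f(e)=0$, sets $m=e^f$, verifies the two equivariance identities (Lemma~\ref{VERIF}), and then descends to $m'$ on $G_{0\sharp}$ after checking $m$ is trivial on $\ker\pi_0$. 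Your phrase ``a section of the flat affine $\R$-bundle over $W$ with monodromy $\log m$'' is a compact reformulation of exactly this construction, so at that level you have reproduced the argument. Two concrete issues, however.

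\textbf{The cyclic-image claim is unjustified and in general false.} It is correct that $m_{\overline\Gamma_0}$ is trivial on $(\overline\Gamma_0)_e$ (since a connected unimodular group is strongly unimodular) and therefore factors through $\pi_0(\overline\Gamma_0)$, but that component group need not be cyclic, and even a finitely generated abelian group can map onto a dense subgroup of $\R^+$ (e.g., $\Z^2\ni(a,b)\mapsto e^{a+b\sqrt2}$). The paper never needs the image to be discrete: it only uses that $\log m'$ is a non-trivial morphism on $\overline\Gamma_0$, and Tischler's theorem handles arbitrary period groups. You should drop the claim, especially because you then lean on the resulting ``$\Z$-covering'' and ``discrete image'' in the final step.

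\textbf{The crucial submersion step is not actually supplied.} You correctly identify that the section $\chi$ must have nowhere-vanishing differential (equivalently, the form $\omega$ on $W$ must be nowhere-vanishing) in order for $D=\chi\circ\overline f$ to be a genuine developing map, and then you defer this to \cite{Macias2005}. But that reference is a statement about the manifold $M$ (or leaf closures) carrying a Lie foliation, not about realizing a given class on $W=\overline\Gamma_0\backslash G_{0\sharp}$ by a nowhere-vanishing $1$-form; the reduction is not spelled out, and it also rests on the dubious discreteness claim. The paper's own Lemma~\ref{CRUCIAL} only establishes the equivariance identities for $m$ and does not assert that $m$ is a submersion, so it is not a point you alone have glossed over; but invoking \cite{Macias2005} here does not close it, and a complete write-up of this proposition would need to address it directly.
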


Before proving this Proposition we need several previous Lemmas.

From Proposition \ref{unimodularidaddeGamma}, we know that the group $(\overline \Gamma)_e$ is unimodular but $\overline\Gamma$ is not. 
We shall consider the universal covering $\pi_0\colon \widehat{G_{0\sharp}}\to G_{0\sharp}$. Let $$H=\pi_0^{-1}(\overline \Gamma_0)\subset \widehat{G_{0\sharp}}$$ be the inverse image of the closure $\overline\Gamma_0$.
By Proposition \ref{funcionmodulardee2} 
we know that $H$ is not unimodular. 

\begin{lema}\label{COMPUNIM}The connected component $H_e$ is unimodular.
\end{lema}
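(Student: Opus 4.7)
The plan is to exploit Proposition \ref{funcionmodulardee2} together with the paper's observation, right after Definition \ref{deffuerteuni}, that every connected unimodular Lie group is automatically strongly unimodular.

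First I would note that, since $\overline{\Gamma_0}$ is a closed subgroup of $G_{0\sharp}$, the preimage $H=\pi_0^{-1}(\overline{\Gamma_0})$ is a closed Lie subgroup of $\widehat{G_{0\sharp}}$, and the restriction $\pi_0|_H\colon H\to\overline{\Gamma_0}$ is a surjective Lie group morphism with discrete kernel $\ker\pi_0$, i.e.\ a covering in the sense of Proposition \ref{funcionmodulardee2}. Since $\pi_0|_H$ is a local diffeomorphism, it maps $H_e$ into $(\overline{\Gamma_0})_e$, and a standard path-lifting argument starting at the identity shows that the restriction $\pi_0\colon H_e\to(\overline{\Gamma_0})_e$ is itself a covering of connected Lie groups.

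Next I would apply Proposition \ref{funcionmodulardee2} to this restricted covering, obtaining
$$\det\Ad_{H_e}(h)=\det\Ad_{(\overline{\Gamma_0})_e}(\pi_0(h)), \qquad\forall\,h\in H_e.$$
By the hypothesis of Proposition \ref{TERCER}, the connected Lie group $(\overline{\Gamma_0})_e$ is unimodular, hence strongly unimodular by the observation following Definition \ref{deffuerteuni}; that is, $\det\Ad_{(\overline{\Gamma_0})_e}(g)=1$ for every $g\in(\overline{\Gamma_0})_e$. Consequently $\det\Ad_{H_e}(h)=1$ for all $h\in H_e$, so by Definition \ref{seccionfuncmodular} the modular function $m_{H_e}$ is identically~$1$, which proves that $H_e$ is (strongly) unimodular.

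I do not anticipate any real obstacle here; the only slightly delicate point is the identification of the restricted covering $H_e\to(\overline{\Gamma_0})_e$, but this is routine once one uses the path-lifting property of covering maps. It is worth emphasising that the argument is essentially dual to the one used in Proposition \ref{unimodularidaddeGamma}: there, unimodularity was transferred \emph{downwards} along the covering $E$, whereas here it is transferred \emph{upwards} along $\pi_0$, exploiting the fact that on the connected component unimodularity and strong unimodularity coincide.
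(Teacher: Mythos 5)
Your proof is correct and follows essentially the same approach as the paper: both hinge on Proposition \ref{funcionmodulardee2} together with the observation that a connected unimodular Lie group is strongly unimodular. The only difference is organisational --- the paper introduces $H_0 = \pi_0^{-1}((\overline{\Gamma_0})_e)$, concludes $H_0$ is unimodular, passes to $(H_0)_e$ via Proposition \ref{geunimodular}, and then verifies $(H_0)_e = H_e$; you instead establish directly that $\pi_0|_{H_e}\colon H_e \to (\overline{\Gamma_0})_e$ is a surjective covering (which is legitimate, since restricting the covering $\pi_0$ to the saturated set $H$ again gives a covering, so path-lifting applies), thereby skipping both the intermediate group $H_0$ and Proposition \ref{geunimodular}.
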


\begin{proof}
By Proposition \ref{funcionmodulardee2}  again,
we know that $H_0=\pi_0^{-1}((\overline\Gamma_0)_e)$  is unimodular, hence, by Proposition \ref{geunimodular}, the group $(H_0)_e$ is unimodular. In fact, we shall prove that this latter group equals $H_e$.

Obviously, $H_0\subset  H$, so  $(H_0)_e\subset H_e$.
On the other hand, $\pi_0(H_e)\subset \pi_0(H)=\overline\Gamma_0$, hence  $\pi_0(H_e)\subset (\overline\Gamma_0)_e$, by connectedness. It follows that   $H_e\subset H_0$ and by connectedness, $H_e\subset  (H_0)_e$.
\end{proof}

The following result is the crucial one. Let $m_H$ be the modular function of $H$.

\begin{lemma}\label{CRUCIAL}
It is possible to extend the non-trivial morphism of groups  $m_{H} \colon H \to (\R^+,\cdot)$  to a map $m\colon \widehat{G_{0\sharp}}\to {\R}^+$ such that:
\begin{enumerate}
\item
$m\vert_ H = m_H$,
\item
$m(h  y)=m(h)m(y)$ for all $h\in H$,  $y\in \widehat{G_{0\sharp}}$.
\end{enumerate}
\end{lemma}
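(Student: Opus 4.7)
The plan is to realize $m$ as $m=\exp\phi$ for a smooth $H$-equivariant function $\phi\colon \widehat{G_{0\sharp}}\to\R$, where $H$ acts on the target $\R$ by the affine action $h\cdot r=\log m_H(h)+r$. Condition (2) of the lemma is precisely this equivariance $\phi(hy)=\log m_H(h)+\phi(y)$, while condition (1) is equivalent to $\phi(e)=0$, which can always be arranged a posteriori by subtracting the constant $\phi(e)$ and using that $\log m_H(e)=0$.

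To produce such $\phi$, observe that $H=\pi_0\inv(\overline\Gamma_0)$ is the preimage of a closed subgroup under the covering $\pi_0$, hence a closed Lie subgroup of $\widehat{G_{0\sharp}}$, so left translation by $H$ is free and proper; the projection $\pi\colon\widehat{G_{0\sharp}}\to B=H\backslash \widehat{G_{0\sharp}}$ is therefore a principal $H$-bundle over a smooth paracompact base. Choose an open cover $\{U_\alpha\}$ of $B$ over which $\pi$ admits smooth local sections $s_\alpha\colon U_\alpha\to \widehat{G_{0\sharp}}$, and on the saturated open set $\pi\inv(U_\alpha)$ define the smooth function $\phi_\alpha(h\cdot s_\alpha(u))\eqdef \log m_H(h)$; the group law immediately gives the $H$-equivariance $\phi_\alpha(h'\cdot h\cdot s_\alpha(u))=\log m_H(h')+\phi_\alpha(h\cdot s_\alpha(u))$.

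Now pick a smooth partition of unity $\{\rho_\alpha\}$ on $B$ subordinate to $\{U_\alpha\}$ and set
$$\phi\eqdef \sum_\alpha(\rho_\alpha\circ\pi)\phi_\alpha.$$
Since $\pi(hy)=\pi(y)$, the weights $\rho_\alpha\circ\pi$ are $H$-invariant, so
$$\phi(hy)=\sum_\alpha\rho_\alpha(\pi(y))(\log m_H(h)+\phi_\alpha(y))=\log m_H(h)+\phi(y),$$
using $\sum_\alpha\rho_\alpha=1$. After replacing $\phi$ by $\phi-\phi(e)$, the function $m\eqdef\exp\phi$ satisfies both (1) and (2).

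The main conceptual point is that the \emph{affine} (rather than linear) character of the action of $H$ on $\R$ is what makes the partition-of-unity averaging preserve equivariance: a convex combination of equivariant maps into an affine space is again equivariant, provided the weights are $H$-invariant. Note that the proof uses only that $H$ is a closed Lie subgroup; neither the non-unimodularity of $H$ nor the unimodularity of $H_e$ given by Lemma \ref{COMPUNIM} enters here, as those hypotheses will be exploited only later, when $m$ is differentiated in the proof of Proposition \ref{TERCER} to produce the non-singular closed $1$-form required by Tischler's theorem.
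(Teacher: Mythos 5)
Your proof is correct, but it proceeds by a genuinely different (and in some respects more elementary) route than the paper's.  The paper first observes that $m_H$ is trivial on $H_e$ (using the unimodularity of $H_e$ from Lemma~\ref{COMPUNIM}), so $\log m_H$ factors through the discrete group $H/H_e\cong\pi_1(W)$, where $W=H\backslash\widehat{G_{0\sharp}}$; it then identifies $\log\overline m_H\in\mathrm{Hom}(\pi_1(W),\R)$ with a closed $1$-form $\omega$ on $W$ via the de Rham isomorphism, pulls it back to $\widehat{G_{0\sharp}}$ where it becomes exact, chooses a primitive $f$ with $f(e)=0$, and sets $m=e^{f}$; the verification that this $f$ has the desired cocycle property is Lemma~\ref{VERIF}.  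Your argument instead regards $\pi\colon\widehat{G_{0\sharp}}\to W$ as a principal $H$-bundle, defines $\phi_\alpha$ explicitly from a local section, and averages with a partition of unity, exploiting the observation (which is the real content of your proof) that equivariance with respect to the \emph{affine} action $h\cdot r=\log m_H(h)+r$ is preserved under convex combinations with $H$-invariant weights.  The chief gain is that you genuinely bypass Lemma~\ref{COMPUNIM}: as you note, your construction needs only that $H$ is a closed subgroup of a simply connected Lie group, so the unimodularity of $H_e$ plays no role here.  (In fact, on close inspection the paper's proof of this particular lemma does not actually use the compactness of $W$ either, since $\mathrm{Hom}(\pi_1(W),\R)\cong H^1_{DR}(W)$ holds for any smooth manifold; that compactness only becomes essential later, for Tischler's theorem in Proposition~\ref{TERCER}.)  What the paper's route buys in exchange is a natural interpretation of $m$ as the exponential of a primitive of a pulled-back closed $1$-form on $W$, which is aesthetically closer to the differential-forms viewpoint running through the rest of the argument; but your construction is shorter, self-contained, and produces the same smooth $m$ needed downstream.
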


\begin{proof}
Since $H_e$ and $H$ have the same Lie algebra, it is clear that the modular function of $H_e$ is the restriction of $m_H$ to $H_e$.  But $H_e$ is unimodular (Lemma \ref{COMPUNIM}), then
 $m_H(\gamma)=m_{H_e}(\gamma)=1$ for all $\gamma \in  H_e$. 
Hence there is a well-defined morphism  
\begin{equation}
\label{WELLDEF}
\overline m_{H}\colon H/H_e \to ({\R}^+,\cdot)
\end{equation}
 given by $\overline m_H([\gamma])=m_H(\gamma)$.

From Proposition \ref{fblucompacto} we know that the manifold 
$$W=H\backslash \widehat{G_{0\sharp}}=\overline\Gamma_0\backslash G_{0\sharp}$$ is compact. Since the group $\widehat{G_{0\sharp}}$ is simply connected, the universal covering of $W$ is the manifold $\widehat W= H\backslash \widehat{G_{0\sharp}}$, and the fundamental group of $W$ is $\pi_1(W)=H_e\backslash H$.
By applying logarithms, we have a group morphism
$$
\log  \overline m_{H}\colon  H/H_e=\pi_1(W)\to \R,
$$
so we can identify $\log  \overline m_H\in {\rm Hom}(\pi_1(W),\R)$ with a cohomology class $[\omega]\in H^1_{DR}(W)$ such that
\begin{equation}\label{integral}
\log \overline m_H([\alpha])=\int_\alpha \omega, \quad \mathrm{\ for\ all\ }[\alpha]\in \pi_1(W),
\end{equation}
where $[\alpha]$ denotes the homotopy class of the loop  $\alpha$ in $W$ with base point $[e]$.

Now, let $\pi\colon \widehat{G_{0\sharp}}\to W=H\backslash \widehat{G_{0\sharp}}$ be the natural projection. The $1$-form $\pi^*\omega$ in $\widehat{G_{0\sharp}}$ is closed, because $\omega$ is closed in $W$. Since $\widehat{G_{0\sharp}}$ is simply connected, hence $H^1(\widehat{G_{0\sharp}})=0$, the form $\pi^*\omega$ is exact, that is, there exists a map $f\colon \widehat{G_{0\sharp}}\to \R$ such that $\dd  f=\pi^*\omega$.
Since the translations by a constant do no affect the differential we can consider that $f(e)=0$.

With this condition, we have that $f$ verifies the following properties, whose proof will be delayed to Lemma \ref{VERIF}:

\begin{enumerate}
\item
$f(\gamma x)=f(\gamma)+f(x)$, for all $\gamma\in H$, $x\in \widehat{G_{0\sharp}}$;
\item 
$f_{\mid H}=\log m_{H}$.
\end{enumerate}

Let us take $m=e^f\colon \widehat{G_{0\sharp}} \to \R^+$. This is the map we were looking for, because
$$
m(\gamma)=e^{f(\gamma)}=e^{\log m_H(\gamma)}=m_H(\gamma),$$
for all $\gamma\in H$, and
$$m(\gamma y)=e^{f(\gamma y)}=e^{f(\gamma)+f(y)}=e^{f(\gamma)} e^{f(y)}=m(\gamma)  m(y),$$
for all $\gamma\in H, y\in \widehat{G_{0\sharp}}$.
\end{proof}

We now prove the Lemma announced a few lines above.
\begin{lemma}\label{VERIF}We have:
\begin{enumerate}
\item
$f(\gamma x)=f(\gamma)+f(x)$, for all $\gamma\in H$, $x\in \widehat{G_{0\sharp}}$;
\item 
$f_{\mid H}=\log \, m_{H}$.
\end{enumerate}
\end{lemma}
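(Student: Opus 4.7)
The plan is to prove (1) first by a translation-invariance argument, and then to derive (2) by integrating along a path and invoking the defining property of $\omega$ in \eqref{integral}.

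For (1), I would observe that for any $\gamma\in H$ the left translation $L_\gamma\colon \widehat{G_{0\sharp}}\to \widehat{G_{0\sharp}}$, $x\mapsto \gamma x$, satisfies $\pi\circ L_\gamma=\pi$, since both sides send $x$ to the coset $Hx\in W=H\backslash \widehat{G_{0\sharp}}$. Pulling back the defining equation $df=\pi^*\omega$ by $L_\gamma$ yields $d(f\circ L_\gamma)=df$, so $f\circ L_\gamma-f$ is locally constant and therefore constant (as $\widehat{G_{0\sharp}}$ is connected). Evaluating at $x=e$ and using the normalization $f(e)=0$ pins this constant down to $f(\gamma)$, which is exactly (1).

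For (2), taking $x\in H$ in (1) already shows that $f\vert_H\colon H\to\R$ is a group morphism. To identify this morphism with $\log m_H$, I would pick for each $\gamma\in H$ a smooth path $\alpha\colon[0,1]\to\widehat{G_{0\sharp}}$ with $\alpha(0)=e$ and $\alpha(1)=\gamma$, and compute
$$f(\gamma)=\int_0^1\alpha^*(df)=\int_0^1\alpha^*(\pi^*\omega)=\int_{\pi\circ\alpha}\omega.$$
Since $\alpha(0)=e$ and $\alpha(1)=\gamma\in H$ both project to $[e]\in W$, the composition $\pi\circ\alpha$ is a loop in $W$ based at $[e]$. Factoring $\pi$ through the universal cover $\widehat W=H_e\backslash \widehat{G_{0\sharp}}$ (which is simply connected because $\widehat{G_{0\sharp}}$ is simply connected and $H_e$ is connected), the lift of $\pi\circ\alpha$ starting at the identity coset is the projection of $\alpha$ itself, so it ends at $H_e\gamma$. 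Consequently, under the isomorphism $\pi_1(W)\cong H/H_e$ the class $[\pi\circ\alpha]$ corresponds to $[\gamma]$. Applying \eqref{integral} then gives
$$f(\gamma)=\int_{\pi\circ\alpha}\omega=\log\overline m_H([\gamma])=\log m_H(\gamma),$$
which is (2).

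The main subtlety, though not deep, is the bookkeeping in the last step: one must verify that the identification $\pi_1(W)\cong H/H_e$ obtained by lifting loops through $\widehat{G_{0\sharp}}\to\widehat W\to W$ agrees with the one used to define $\overline m_H$ in \eqref{WELLDEF}. Once that is checked, no further analytic input is required, and the two items of the lemma follow at once.
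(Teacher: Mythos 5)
Your proof is correct and follows essentially the same route as the paper's: for (1), showing $d(f\circ L_\gamma)=df$ via $\pi\circ L_\gamma=\pi$ and pinning the constant down by $f(e)=0$; for (2), integrating $\pi^*\omega=df$ along a path from $e$ to $\gamma$ and reading off the answer from \eqref{integral} via the identification $\pi_1(W)\cong H/H_e$. Your version is slightly cleaner in stating $\widehat W=H_e\backslash\widehat{G_{0\sharp}}$ explicitly (which the paper appears to misstate as $H\backslash\widehat{G_{0\sharp}}$) and in compressing the pullback computation, but no new idea is introduced.
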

\begin{proof}\ 
{(1)}
Since $H\subset \widehat{G_{0\sharp}}$, we consider the composition  $f\circ L_\gamma\colon\widehat{G_{0\sharp}}\to \R$, where $L_\gamma$ denotes the left translation $L_\gamma(x)=\gamma x$. For the projection $\pi\colon\widehat{G_{0\sharp}}\to W=H\backslash \widehat{G_{0\sharp}}$ we have $\pi \circ L_\gamma=\pi$ because
$$
(\pi\circ L_\gamma)(x)=[\gamma x]=[x]\in H\backslash \widehat{G_{0\sharp}}.
$$ 
Then, for all $v\in T_x \widehat{G_{0\sharp}}$, we have
\begin{align*}
\dd(f\circ L_\gamma)_x(v)
&= (f\circ L_\gamma)_{*x}(v)\\
&= (f_{*\gamma x}\circ(L_\gamma)_{*x})(v)\\
 &=(\dd f)_{\gamma x}((L_\gamma)_{*x}(v))\\
&= (\pi^* \omega)_{\gamma x}((L_\gamma)_{*x}(v))\\
&= \omega_{[\gamma x]}(\pi_{*\gamma x}((L_\gamma)_{*x}(v)))\\
&=\omega_{[x]}((\pi\circ L_\gamma)_{*x}(v))\\
&=\omega_{[x]}(\pi_{*x}(v))\\
&= (\pi^*\omega)_x(v)\\
&=(\dd f)_x(v).
\end{align*}
As a consequence, $\dd (f\circ L_\gamma)={\rm d}f$ for all $\gamma\in H$. But, since $\widehat{G_{0\sharp}}$ is connected, it follows that $f\circ L_\gamma=f+c(\gamma)$ for some constant $c(\gamma)$ depending only on $\gamma$. Moreover, since $f(e)=0$, we obtain that $c(\gamma)=f(\gamma)$. It follows that for an arbitrary $x\in \widehat{G_{0\sharp}}$ we have $f(\gamma x)=f(\gamma)+ f(x)$. 

\medskip

(2)
Let $\beta$ be a path in $\widehat{G_{0\sharp}}$, joining the identity $e$ to the point  $\gamma\in H$. If we project this path through $\pi$ we obtain a loop $\alpha=\pi \circ \beta$ in $W=H\backslash \widehat{G_{0\sharp}}$. So, by (\ref{integral}), we have
$$
\log \overline m_{H}([\pi\circ\beta])=\int_{\pi\circ\beta} \omega.
$$
Now, the isomorphism  $\pi_1(W)\cong H/H_e$ sends the homotopy class of the loop $\alpha$ into the final point $\beta(1)=\gamma$ of the lifting $\beta$ with $\beta(0)=e$. So 
$\log \overline m_H([\pi\circ\beta])= \log m_H(\gamma)$.

On the other hand,
\begin{align*}
\int_{\pi\circ\beta}\omega&=\int_{[0,1]}(\pi\circ\beta)^*\omega\\
&=\int_{[0,1]} \beta^*\pi^*\omega\\
&=\int_{[0,1]}\beta^*(\dd f)\\
&=\int_{[0,1]} \dd(\beta^*f)\\
&=\int_{[0,1]} \dd(f\circ \beta)\\
&=(f\circ\beta)(1)-(f\circ\beta)(0)\\
&=f(\gamma)-f(e)\\
&=f(\gamma).
\end{align*}
So we have checked that $f(\gamma)=\log m_{H}(\gamma)$ for all $\gamma\in H$. 
\end{proof}

\begin{proof}[Proof of Proposition \ref{TERCER}]

Consider the restriction  $H=\pi_0^{-1}(\overline\Gamma_0)\to \overline\Gamma_0$ of the universal covering $\pi_0\colon \widehat{G_{0\sharp}} \to G_{0\sharp}$. By Proposition \ref{funcionmodulardee2}, it follows that
$$m(k)=m_H(k)= \det \Ad_{\overline\Gamma_0}(e)=1$$ for all $k\in \ker \pi_0\subset H$, where $m\colon \widehat{G_{0\sharp}}\to {\R}^+$ is the map given by  Lemma \ref{CRUCIAL}.
In this way,
the map $m$ passes to the quotient $\widehat{G_{0\sharp}}/\ker \pi_0$, so  we have a map 
$$m^\prime\colon G_{0\sharp}\to\R^+$$ such that
\begin{equation} \label{gammalineal}
m^\prime(\gamma\,  y)=m^\prime(\gamma)\,  m^\prime(y),
\end{equation}
for all $\gamma\in\overline\Gamma_0$, $y\in G_{0\sharp}$.  

Then,  the map $\log m^\prime\colon  G_{0\sharp}\to \R$ is surjective, because $G_{0\sharp}$ is connected and  $m\vert_H=m_H$ is not bounded (the group $H$ is not unimodular).

Let us consider the diagram defining the Blumenthal bundle as in section \ref{blubundle}, that is,
\begin{equation}\label{diagblucubiertas}
\xymatrix{
f^*(G_{0\sharp})\ar[r]^{\overbar f}\ar[d]_\tau&G_{0\sharp}\\
\Gamma_0\backslash f^*(G_{0\sharp})}
\end{equation}
The maps $D=\log m^\prime\circ \overbar f $ and $h^\prime=\log m^\prime \circ h_0$, where $h_0\colon \Aut(\tau)\cong \Gamma_0$, are  respectively  the developing map and the holonomy morphism of an $\R$-Lie foliation  on the (non-connected) manifold $\Gamma_0\backslash f^*G_{0\sharp}$. This manifold is compact by Proposition \ref{fblucompacto}. 

It only remains to show the equivariance, which will follow from the condition \eqref{gammalineal}. In fact,  if $x\in {f^*(G_{0\sharp}})$ and $\gamma\in \Aut \tau\stackrel{h}\cong \Gamma_0$, then
\begin{align*}
D(\gamma  x)&=\log m^\prime(\overbar f (\gamma x))\\
&= \log m^\prime (h(\gamma)\overbar f (x))\\
&=\log(m^\prime(h(\gamma))\cdot m^\prime(\overbar f (x)))\\
&=\log (m^\prime(h(\gamma))+\log(m^\prime(\overbar f (x)))\\
&=h^\prime(\gamma)+D(x).
\end{align*}
because $h(\gamma)\in \overline\Gamma_0$. 

Hence, Tischler theorem applies and allows us to state that $\Gamma_0\backslash f^*(G_{0\sharp})$ fibers over $S^1$.
\end{proof}

\subsection{Lie foliations.}

Remember from Example \ref{liefolprimeraparte} that
the foliation $\F$ on the compact manifold $M$ is a {\em Lie foliation} if it is transversely homogeneous with  transverse model  a Lie group. We can assume that $N=\widehat G_0$ is a connected simply connected Lie group.

In this case the foliation is Riemannian and the developing submersion is a locally trivial bundle with connected fibers. Moreover,  the Blumenthal fiber bundle is identified with $M$. Finally,
Theorem \ref{cohth} reads as follows, as it is well known:

\begin{teo}
Given a $\widehat G_0$-Lie foliation $\F$, with holonomy  $\Gamma_0\subset \widehat G_0$, the base-like cohomology $H(M/{\F})$ is isomorphic to $H_{\overline\Gamma}(\widehat G_0)$.
\end{teo}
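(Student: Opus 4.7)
The plan is to deduce this statement as a direct specialization of Theorem \ref{cohth}, once one verifies that in the Lie foliation setting all the hypotheses of that theorem are automatically satisfied.

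First I would observe that for a Lie foliation the isotropy $K = \{e\}$ is trivial, so in particular $K_\sharp$ is trivial and hence compact. Since the ambient manifold $M$ is compact and the transverse model $\widehat G_0$ is connected, Proposition \ref{friemannianasikcompacto} applies and tells us that the developing submersion $f\colon \widetilde M \to \widehat G_0$ is a locally trivial fiber bundle; in particular it is surjective. This takes care of the surjectivity hypothesis in Theorem \ref{cohth}.

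Second, I would invoke Fedida's structure theorem \cite{Fedida1973} for Lie foliations (cited in Example \ref{liefolprimeraparte}), which asserts that one can choose the defining covering so that the developing map has connected fibers equal to the leaves of the lifted foliation $\widetilde \F$. Equivalently, using Remark \ref{CONECCTEDFIB} one may pass to the universal covering: since $\widehat G_0$ is simply connected, the long exact homotopy sequence of the bundle $\widehat f$ forces connectedness of the fibers. Either way, the hypothesis of connected fibers in Theorem \ref{cohth} is satisfied.

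With both hypotheses in place, Theorem \ref{cohth} yields $H(M/\F) \cong H_\Gamma(\widehat G_0)$. Finally, as already noted in the paper immediately before Theorem \ref{morfismo}, the complex of $\Gamma$-invariant differential forms on the homogeneous space coincides with that of $\overline \Gamma$-invariant forms (smooth invariance under a group equals invariance under its topological closure), giving $H_\Gamma(\widehat G_0) = H_{\overline \Gamma}(\widehat G_0)$ and therefore the claimed isomorphism. There is no genuine obstacle: the result is essentially Theorem \ref{cohth} specialized to the Lie foliation case, where triviality of the isotropy and Fedida's theorem make every hypothesis automatic.
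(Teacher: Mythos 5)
Your proposal is correct and follows exactly the paper's own route: the result is read off as the specialization of Theorem \ref{cohth} to the Lie case, where $K_\sharp=\{e\}$ is trivially compact, Proposition \ref{friemannianasikcompacto} gives surjectivity, simple connectedness of $\widehat G_0$ together with Remark \ref{CONECCTEDFIB} gives connected fibers of the developing map, and $H_\Gamma=H_{\overline\Gamma}$ as observed just before Theorem \ref{morfismo}. The paper states this only in passing (``Theorem \ref{cohth} reads as follows, as it is well known''), so your proof simply supplies the verification of hypotheses that the paper leaves implicit; the only minor imprecision is attributing connectedness of the fibers directly to Fedida's theorem, but your alternative argument via the universal covering and the homotopy exact sequence is the one that actually does the work.
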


On the other hand, our Theorem  \ref{tercerteoremadeuni} shows that if the Lie groups $\widehat G_0$ and $\overline\Gamma_0$ are unimodular then the foliation $\F$ is unimodular. 
El Kazimi Alaoui and Nicolau went further in the study of the unimodularity of Lie foliations and proved the following result.

\begin{teo}\cite[Theorem 1.2.4]{ElKacimi2001}
The $\widehat G_0$-Lie foliation $\F$ is unimodular if and only if the Lie groups $\widehat G_0$ and $\overline\Gamma_0$ are unimodular. 
\end{teo}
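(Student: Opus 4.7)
The equivalence splits into two implications of rather different flavour, and my plan would treat them separately.

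For the direction $(\Leftarrow)$, the result is a direct specialization of Theorem \ref{tercerteoremadeuni} to the Lie foliation setting. Since a Lie foliation has trivial isotropy $K_0=\{e\}$, the group $(K_0)_\sharp$ is trivially compact and strongly unimodular, so the hypotheses of Theorem \ref{tercerteoremadeuni} concerning the isotropy are automatic; the connectedness of the fibers of the developing map follows from Proposition \ref{friemannianasikcompacto}, or alternatively from Fedida's structure theorem. Granted the unimodularity of $\widehat G_0$ and $\overline\Gamma_0$, Theorem \ref{tercerteoremadeuni} then yields the unimodularity of $\F$. (Equivalently, this direction is a combination of Theorem \ref{gkunim} together with the identification $H(M/\F)\cong H_{\overline\Gamma_0}(\widehat G_0)$ supplied by Theorem \ref{cohth}.)

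For the direction $(\Rightarrow)$, my plan is to argue by contrapositive: assuming that $\widehat G_0$ or $\overline\Gamma_0$ fails to be unimodular, I would show that $H^q_{\overline\Gamma_0}(\widehat G_0)=0$, where $q=\dim\widehat G_0$. The plan is to compute the top-degree cohomology of the complex $\Omega^{\bullet}_{\overline\Gamma_0}(\widehat G_0)$ by means of a Poincar\'e duality tailored to this complex. The first step is to use that $W=\overline\Gamma_0\backslash\widehat G_0$ is compact (Proposition \ref{fblucompacto}) and to average $\overline\Gamma_0$-invariant forms along the fibers of $\widehat G_0\to W$, in the spirit of the proof of Theorem \ref{morfismo}, so as to identify $H^q_{\overline\Gamma_0}(\widehat G_0)$ with an invariant subspace of the Hazewinkel-twisted line over $W$. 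The second step is to apply Theorem \ref{hcero} (extended to the present context), which identifies the non-vanishing of this twisted $H^0$ with the simultaneous conditions $\trace\ad_X=0$ on the Lie algebra of $\overline\Gamma_0$ and on a transverse complement in $\widehat\g_0$, that is, with the unimodularity of both $\overline\Gamma_0$ and $\widehat G_0$.

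The main obstacle I anticipate is carrying out the Poincar\'e-duality step when the closed subgroup $\overline\Gamma_0\subset\widehat G_0$ has positive dimension but is not assumed discrete: the pair $(\widehat\g_0,\overline\g_0)$ need not be reductive, so Theorem 7.31 of \cite{Knapp1988} is not applicable verbatim. To get around this I would either introduce an auxiliary reductive decomposition on the normal bundle of the $\overline\Gamma_0$-orbits or, more flexibly, replace the invariant de Rham complex by a Cartan-model equivariant resolution for the $\overline\Gamma_0$-action on $\widehat G_0$; both routes display the obstruction to a non-trivial top class as the product of the two modular characters, hence force unimodularity of $\widehat G_0$ and of $\overline\Gamma_0$. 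Details along these lines are the contribution of \cite{ElKacimi2001}.
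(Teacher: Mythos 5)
The paper does not prove this theorem: it is stated with a citation to El Kacimi Alaoui and Nicolau, and the only comment the paper makes is that the $(\Leftarrow)$ direction is recovered by its own Theorem~\ref{tercerteoremadeuni}. So there is no ``paper's own proof'' against which to compare the full biconditional.

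Your treatment of $(\Leftarrow)$ matches what the paper observes: for a Lie foliation $K_0=\{e\}$, the isotropy hypotheses of Theorem~\ref{tercerteoremadeuni} become vacuous, and the connectedness of the developing fibers follows because $\widehat G_0$ is simply connected (the long exact homotopy sequence of the fibration of Proposition~\ref{friemannianasikcompacto}, or directly Fedida's theorem). That part is fine.

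For $(\Rightarrow)$ you are honest that the details are ``the contribution of \cite{ElKacimi2001},'' which is exactly how the paper treats the statement; but the sketch you propose is not a correct route and, more importantly, is not what the paper does. Two concrete issues. First, the averaging in the proof of Theorem~\ref{morfismo} integrates over $W=\overline\Gamma_0\backslash\widehat G_0$, not along the fibers of $\widehat G_0\to W$, and it produces a \emph{retraction} $\Omega_{\overline\Gamma_0}(\widehat G_0)\to\Omega_{\widehat G_0}(\widehat G_0)$, hence injectivity of $i^*$; injectivity of $i^*$ tells you nothing about $H^q_{\overline\Gamma_0}(\widehat G_0)$ when $H^q_{\widehat G_0}(\widehat G_0)=0$, so it cannot by itself force unimodularity. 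Second, Theorem~\ref{hcero} is a statement about the reductive pair $(\g,K_\sharp)$ with $K_\sharp$ compact; you correctly flag that $(\widehat\g_0,\overline\g_0)$ need not be reductive, but the work-arounds you gesture at (an ad hoc reductive decomposition, or a Cartan-model equivariant complex) are not developed, and neither appears anywhere in the paper. Since the paper simply cites this result without proof, the right answer here is: $(\Leftarrow)$ is the paper's Theorem~\ref{tercerteoremadeuni}; $(\Rightarrow)$ is taken from El Kacimi--Nicolau and is not reproved in the present paper, so a proof of it lies outside the scope of what the paper establishes.
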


Finally, we have proved in
 \cite{Macias2005} the following result, which is now a particular case of our Theorem  \ref{generalizaciondelrendus},   when restricted to  Lie foliations.
 
\begin{teo}
If the Lie foliation $\F$ is not unimodular then either $M$ or the closures of the leaves fiber over $S^1$,
\end{teo}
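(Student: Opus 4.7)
The plan is to deduce this theorem as an immediate corollary of Theorem \ref{generalizaciondelrendus}, by verifying that its hypotheses are automatic in the Lie-foliation setting and by observing that the third alternative there collapses onto the first.

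First I would check the hypotheses of Theorem \ref{generalizaciondelrendus}. Since $\F$ is a Lie foliation we have $K_0=\{e\}$, so $(K_0)_\sharp$ is the trivial group, which is compact and strongly unimodular. The transverse model can be taken to be the connected (simply connected) Lie group $\widehat G_0$, meeting the connectedness requirement on $G_0$. Fedida's structure theorem (Example \ref{liefolprimeraparte}), or alternatively Proposition \ref{friemannianasikcompacto} applied with trivial isotropy, guarantees that the developing map $f\colon \widetilde M\to \widehat G_0$ is a locally trivial fibration whose fibers are the leaves of $\widetilde\F$; in particular the fibers are connected, so trivially have a finite number of connected components.

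Next I would simply invoke Theorem \ref{generalizaciondelrendus}: under the non-unimodularity hypothesis one of the three spaces $M$, the closure $\overline L$ of a leaf, or the total space of the Blumenthal bundle $\overline\rho\colon \Gamma\backslash f^*(G_\sharp)\to M$ fibers over $S^1$.

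Finally, I would collapse the third alternative onto the first. As noted in the paragraph immediately following the statement of Theorem \ref{generalizaciondelrendus}, and as is visible from the construction in Section \ref{blubundle}, the Blumenthal bundle is a principal $K_\sharp$-bundle; for a Lie foliation $K_\sharp$ is trivial, hence $\overline\rho$ is a diffeomorphism and its total space coincides with $M$. Thus the three-way alternative reduces to the claimed dichotomy. There is essentially no obstacle in the argument beyond this bookkeeping, since all of the analytic and cohomological work has been packaged into Theorem \ref{generalizaciondelrendus}; the only point worth stating explicitly is the identification of the Blumenthal bundle with $M$ in the Lie case.
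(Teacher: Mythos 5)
Your proposal is correct and takes exactly the paper's route: the paper itself states this theorem as ``a particular case of our Theorem~\ref{generalizaciondelrendus}, when restricted to Lie foliations,'' and your verification (trivial isotropy gives compact, strongly unimodular $(K_0)_\sharp$; Fedida's theorem gives connected fibers over the simply connected model $\widehat G_0$; the Blumenthal bundle is the identity since $K_\sharp$ is trivial) is precisely the bookkeeping the paper leaves implicit.
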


\begin{example}\label{CARREXAMPLE}What follows is Carri\`ere's example \cite{Carriere1982} cited in the Introduction.
Let $A$ be a matrix in $\SL(2,\Z)$ with  
$\trace A> 2$. We can give a Lie group structure
to $\widehat M=\R^3$ by defining
$$(u,t)\cdot(u',t')=(u+A^tu', t+t').$$
The manifold $M$ will be the quotient of $\widehat M$ by the discrete subgroup $\Z^3$.

Let $\lambda>0$   be an eigenvalue of $A$ with an eigenvector $v=(a,b)\in\R^2$, $\vert v\vert=1$. The affine group $\GA(\R)$ of the real line,  generated by homotheties and translations, can be represented by the matrices
\begin{equation}\label{GAMATRIX}
g=\begin{bmatrix}
\lambda^t  &s \cr
0 & 1 \cr
\end{bmatrix},\quad  s,t\in \R,
\end{equation}
so the map  $\widehat f\colon \R^3  \to \GA(\R)$   given by
$$f(x,y,t)=\begin{bmatrix}
\lambda^t  &ax+by \cr
0 & 1 \cr
\end{bmatrix}$$
is a Lie group morphism. Its kernel (fiber) is the line 
generated by the eigenvectors of  the eigenvalue $1/\lambda$, which induces a Lie flow on $M$. Its leaves are not closed because $\lambda$ is an irrational number, their closures are tori.  The holonomy morphism $h$
will be the restriction of $\widehat f$ to $\pi_1(T^3_A)=\Z^3$.

The closure the image of $h$  is the subgroup $\overline \Gamma \cong \Z\times \R$ of matrices
$$\begin{bmatrix}
\lambda^n  &s \cr
0 & 1 \cr
\end{bmatrix},\quad  n\in\Z,s\in \R.$$
which is abelian, hence unimodular. On the other hand, for $g=(s,t)$ as in \eqref{GAMATRIX}, then $$\Ad(g)=
\begin{bmatrix}
\lambda^t&-s\cr
0&1\cr
\end{bmatrix},$$
so the modular function of $GA(\R)$ is
$$m(s,t)=  \lambda^t.$$
As stated in the proof of Theorem \ref{PARTEUNO}, the map 
$$(\log m\circ \widehat f)(x,y,t)=\log \lambda\, t$$
defines a Lie foliation on $T^3_A$ which is the kernel of the closed $1$-form $\omega = \log\lambda\, dt$. Since the group of periods of this form is the discrete subgroup of $\R$ generated by $\log\lambda$, the foliation is a fibration over $S^1$ with tori as fibers.
\end{example}

\nocite{*}
\bibliographystyle{plain}
\bibliography{biblio_THF}

\begin{thebibliography}{10}

\bibitem{AlvarezNozawa2012}
Jes\'us~A. \'Alvarez~L\'opez and Hiraku Nozawa.
\newblock Secondary characteristic classes of transversely homogeneous
  foliations.
\newblock {\em CRM Preprint Series}, (1103):1--57, 2012.

\bibitem{Blumenthal1979}
Robert~A. Blumenthal.
\newblock Transversely homogeneous foliations.
\newblock {\em Ann. Inst. Fourier (Grenoble)}, 29(4):vii, 143--158, 1979.

\bibitem{Blumenthal1980}
Robert~A. Blumenthal.
\newblock The base-like cohomology of a class of transversely homogeneous
  foliations.
\newblock {\em Bull. Sci. Math. (2)}, 104(3):301--303, 1980.

\bibitem{Blumenthal1978}
Robert~Allan Blumenthal.
\newblock {\em Transversely Homogeneous Foliations}.
\newblock ProQuest LLC, Ann Arbor, MI, 1978.
\newblock Thesis (Ph.D.)--Washington University in St. Louis.

\bibitem{Carriere1982}
Yves Carri\`ere.
\newblock Flots riemanniens.
\newblock {\em Ast\'{e}risque}, (116):31--52, 1984.
\newblock Transversal structure of foliations (Toulouse, 1982).

\bibitem{Cheeger1975}
Jeff Cheeger and David~G. Ebin.
\newblock {\em Comparison theorems in {R}iemannian geometry}.
\newblock North-Holland Publishing Co., Amsterdam-Oxford; American Elsevier
  Publishing Co., Inc., New York, 1975.
\newblock North-Holland Mathematical Library, Vol. 9.

\bibitem{ElKacimi2001}
A.~El~Kacimi~Alaoui, G.~Guasp, and M.~Nicolau.
\newblock On deformations of transversely homogeneous foliations.
\newblock {\em Topology}, 40(6):1363--1393, 2001.

\bibitem{ElKacimi1993}
A.~El~Kacimi-Alaoui and M.~Nicolau.
\newblock On the topological invariance of the basic cohomology.
\newblock {\em Math. Ann.}, 295(4):627--634, 1993.

\bibitem{ElKacimi1990}
Aziz El~Kacimi-Alaoui and Marcel Nicolau.
\newblock Structures g\'{e}om\'{e}triques invariantes et feuilletages de {L}ie.
\newblock {\em Indag. Math. (N.S.)}, 1(3):323--333, 1990.

\bibitem{Faraut2008}
Jacques Faraut.
\newblock {\em Analysis on {L}ie groups}, volume 110 of {\em Cambridge Studies
  in Advanced Mathematics}.
\newblock Cambridge University Press, Cambridge, 2008.
\newblock An introduction.

\bibitem{Fedida1973}
Edmond Fedida.
\newblock {\em Feuilletages du plan, feuilletage de Lie}.
\newblock PhD thesis, Universit\'e Louis Pasteur, Strasbourg, 1973.
\newblock Th\`ese.

\bibitem{Greub1976}
Werner Greub, Stephen Halperin, and Ray Vanstone.
\newblock {\em Connections, curvature, and cohomology}.
\newblock Academic Press [Harcourt Brace Jovanovich, Publishers], New
  York-London, 1976.
\newblock Volume III: Cohomology of principal bundles and homogeneous spaces,
  Pure and Applied Mathematics, Vol. 47-III.

\bibitem{Hazewinkel1970}
Mihil Hazewinkel.
\newblock A duality theorem for cohomology of lie algebras.
\newblock {\em Math. USSR-Sb (English translation)}, 12:638--644, 1970.

\bibitem{Helgason1962}
Sigurdur Helgason.
\newblock {\em Differential geometry and symmetric spaces}.
\newblock Pure and Applied Mathematics, Vol. XII. Academic Press, New
  York-London, 1962.

\bibitem{Hermann1960}
Robert Hermann.
\newblock A sufficient condition that a mapping of {R}iemannian manifolds be a
  fibre bundle.
\newblock {\em Proc. Amer. Math. Soc.}, 11:236--242, 1960.

\bibitem{Knapp1988}
Anthony~W. Knapp.
\newblock {\em Lie groups, {L}ie algebras, and cohomology}, volume~34 of {\em
  Mathematical Notes}.
\newblock Princeton University Press, Princeton, NJ, 1988.

\bibitem{Macias1994}
Enrique Macias-Virg\'{o}s.
\newblock Homotopy groups in {L}ie foliations.
\newblock {\em Trans. Amer. Math. Soc.}, 344(2):701--711, 1994.

\bibitem{Macias2005}
Enrique Macias-Virg\'{o}s and P.~Mart\'{\i}n-M\'{e}ndez.
\newblock Non-unimodular {L}ie foliations.
\newblock {\em C. R. Math. Acad. Sci. Paris}, 340(5):359--362, 2005.

\bibitem{Masa1992}
Xos\'{e} Masa.
\newblock Duality and minimality in {R}iemannian foliations.
\newblock {\em Comment. Math. Helv.}, 67(1):17--27, 1992.

\bibitem{Molino1988}
Pierre Molino.
\newblock {\em Riemannian foliations}, volume~73 of {\em Progress in
  Mathematics}.
\newblock Birkh\"{a}user Boston, Inc., Boston, MA, 1988.
\newblock Translated from the French by Grant Cairns, With appendices by
  Cairns, Y. Carri\`ere, \'{E}. Ghys, E. Salem and V. Sergiescu.

\bibitem{Reinhart1959}
Bruce~L. Reinhart.
\newblock Harmonic integrals on foliated manifolds.
\newblock {\em Amer. J. Math.}, 81:529--536, 1959.

\bibitem{Royo2009}
J.~I. Royo~Prieto, M.~Saralegi-Aranguren, and R.~Wolak.
\newblock Cohomological tautness for {R}iemannian foliations.
\newblock {\em Russ. J. Math. Phys.}, 16(3):450--466, 2009.

\bibitem{Scott1987}
W.~R. Scott.
\newblock {\em Group theory}.
\newblock Dover Publications, Inc., New York, second edition, 1987.

\bibitem{Tischler1970}
D.~Tischler.
\newblock On fibering certain foliated manifolds over {$S^{1}$}.
\newblock {\em Topology}, 9:153--154, 1970.

\end{thebibliography}
\end{document}